\numberwithin{equation}{section} %鏂圭▼鎸夎妭缂栧彿
\newtheorem{theorem}{Theorem}[section]
\newtheorem{lemma}{Lemma}[section]
\newtheorem{remark}{Remark}[section]
\newtheorem{example}{Example}[section]
\def \d {\mathrm{d}}
\newcommand{\bb}{\boldsymbol}
\titleformat{\subsection}{\normalfont\bfseries} {\thesubsection}{1em}{}[]
\begin{document}
	
	\begin{frontmatter}
		
		\title{A posteriori error estimation for an interior penalty virtual element method of Kirchhoff plates}

		\author{Fang Feng}
		\ead{ffeng@njust.edu.cn}
		\address{School of Mathematics and Statistics, Nanjing University of Science and Technology, Nanjing, China}

		\author{Yuming Hu}
        \ead{202421511168@smail.xtu.edu.cn}
        \author{Yue Yu\footnote{Corresponding author.}}		
		\ead{terenceyuyue@xtu.edu.cn}
		\address{Hunan Key Laboratory for Computation and Simulation in Science and Engineering, Key Laboratory of Intelligent Computing and Information Processing of Ministry of Education, National Center for Applied Mathematics in Hunan, School of Mathematics and Computational Science, Xiangtan University, Xiangtan, Hunan 411105, China}	

        \author{Jikun Zhao}
        \ead{jkzhao@zzu.edu.cn}
         \address{School of Mathematics and Statistics, Zhengzhou University, Zhengzhou 450001, PR China}

\begin{abstract}
  In this paper, we develop a residual-type a posteriori error estimation for an interior penalty virtual element method (IPVEM) for the Kirchhoff plate bending problem. Building on the work in \cite{FY2023IPVEM}, we adopt a modified discrete variational formulation that incorporates the $ H^1 $-elliptic projector in the jump and average terms. This allows us to simplify the numerical implementation by including the $ H^1 $-elliptic projector in the computable error estimators.
  We derive the reliability and efficiency of the a posteriori error bound by constructing an enriching operator and establishing some related error estimates that align with $C^0$-continuous interior penalty finite element methods.
  As observed in the a priori analysis, the interior penalty virtual elements exhibit similar behaviors to $C^0$-continuous elements despite its discontinuity. This observation extends to the a posteriori estimate since we do not need to account for the jumps of the function itself in the discrete scheme and the error estimators.
  As an outcome of the error estimator, an adaptive VEM is introduced by means of the mesh refinement strategy with the one-hanging-node rule. Numerical results from several benchmark tests confirm the robustness of the proposed error estimators and show the efficiency of the resulting adaptive VEM.
\end{abstract}
		
\begin{keyword}
Kirchhoff plate, Interior penalty virtual element method, A posteriori error analysis, Enriching operator, Adaptive methods.
\end{keyword}

\end{frontmatter}
	
	%
	%
	%\clearpage
	%\tableofcontents

	\section{Introduction}

Virtual element methods (VEMs) generalize standard finite element methods to accommodate general polytopal meshes. Initially proposed and analyzed in \cite{Beirao-Brezzi-Cangiani-2013}, with subsequent foundational work in \cite{Ahmad-Alsaedi-Brezzi-2013,Beirao-Brezzi-Marini-2014}, VEMs offer advantages in handling complex geometries and high-regularity solutions compared to traditional finite element methods.
For the plate bending problem or the biharmonic equation, Brezzi and Marini introduced an $ H^2 $-conforming VEM in \cite{Brezzi-Marini-2013}, establishing convergence results in the $ H^2 $ norm. Chinosi and Marini further improved the method in \cite{Chinosi-Marini-2016} by deriving optimal order error estimates in the $ L^2 $ and $ H^1 $ norms through VEM enhancement techniques. Nonconforming VEMs have also been developed, including the $ C^0 $-continuous $ H^2 $-nonconforming VEM presented in \cite{Zhao-Chen-2016} and the Morley-type VEM discussed in \cite{Zhao-Zhang-Chen-2018}. Chen and Huang extended the approach to fully $ H^m $-nonconforming VEMs for $ 2m $-th order problems in any dimension in \cite{Chen-HuangX-2020}.
Recently, the interior penalty  virtual element method (IPVEM) has been explored in \cite{ZMZW2023IPVEM} for the biharmonic equation.  Compared with the conforming or nonconforming VEMs for the biharmonic problem, the IPVEM has advantages
in reducing the number of degrees of freedom (DoFs), which can be regarded as a combination of the virtual element space and discontinuous Galerkin scheme.
 This method was adapted for the fourth-order singular perturbation problem in \cite{ZZ24}, with some adaptations to the original space. These modifications encompass alterations in the definition of the $H^1$-type projection and the selection of the DoFs. In contrast, Ref.~\cite{FY2023IPVEM} employs the original IPVEM formulation from \cite{ZMZW2023IPVEM} to address the fourth-order singular perturbation problem, incorporating techniques from the modified Morley finite element method as described in \cite{WXH06}.

Another point to be mentioned is that due to the large flexibility of the meshes in VEMs, there is no exceptional treatment for hanging nodes, which has great superiority in mesh refinement. Hence, a posteriori error analysis of VEMs is well worth studying. For the fourth-order problem, a residual-based a posteriori error estimate  for the $C^1$-continuous VEM was developed in \cite{ChenHuangLin2022avem}, addressing both the reliability and efficiency of the error bound.  Recently, the lowest-order nonconforming or Morley-type virtual element has been applied to the biharmonic equation, with a residual-based a posteriori error estimator derived in \cite{Carsten2023Morley}. It's worth noting that the a posteriori error analysis in \cite{Carsten2023Morley} circumvents any trace of second derivatives by using some computable conforming companion operator, which significantly reduces the computational complexity of the error estimators. This reduction is facilitated by the special property of the interpolation operator for the Morley virtual element (see Lemma \ref{lem:H2ncIh}).
Recently, the medius error estimates for the interior penalty virtual element method for the biharmonic equation are proposed in \cite{ZJZZ24}.

 In this paper, we consider a posteriori error estimation for the interior penalty virtual element method applied to Kirchhoff plate bending problems. Unlike the jump and average terms in \cite{ZMZW2023IPVEM}, we include the $ H^1 $-elliptic projector $ \Pi_h^{\nabla} $ in the penalty terms, as proposed in \cite{FY2023IPVEM}. This modification enables the design of computable error estimators that incorporate the $ H^1 $-elliptic projector, thereby simplifying the implementation process.
Following the approach in \cite{Huang2021YuMedius}, we construct an enriching operator to establish the reliability of the a posteriori error estimators. We then develop a residual-type a posteriori error estimation for the modified IPVEM applied to Kirchhoff plates.
It is worth noting that although the interior penalty virtual element method is discontinuous, as observed in the a priori error estimate, the interior penalty virtual elements exhibit behavior similar to $ C^0 $-continuous elements. This ``quasi-$ C^0 $ continuity'' can be extended to the a posteriori estimate since we do not need to account for the jumps of the function itself in the discrete scheme or the error estimators.

The remainder of the paper is structured as follows. We begin by introducing the continuous variational problem and presenting some useful results in VEM analysis in Section \ref{sec:cvariationalProb}.
Section \ref{sec:IPVEM} describes the interior penalty virtual element method. We also incorporate inhomogeneous boundary conditions not covered in previous references in the section for numerical experiments. In order to simplify the implementation, we add the elliptic projector $\Pi_h^{\nabla}$ for all $v,w$ in the jump and average terms. In Section \ref{enriching}, we construct an enriching operator and present the corresponding error estimates. Section \ref{posteriori_analysis} develops the a posteriori analysis for the IPVEM, providing both upper and lower bounds. We do not include the jumps of the function itself in the computable posteriori error estimators despite the discontinuity of the interior penalty virtual element.
 Meanwhile, we make a comparison with the analysis of Morley-type virtual elements in \cite{Carsten2023Morley}.
 An adaptive IPVEM is presented in Section \ref{Sec:numerical} with several benchmark tests performed to illustrate the robustness of the residual-type a posteriori error estimator and demonstrate the efficiency of the adaptive IPVEM.

\section{The continuous variational problem} \label{sec:cvariationalProb}

Let $\Omega$ be a bounded  polygonal domain of $\mathbb{R}^2$ with boundary $\partial \Omega$.  For $f\in L^2(\Omega)$, we consider the following boundary value problem of the Kirchhoff plate:
\begin{equation}\label{strongform}
	\left\{\begin{array}{ll}
		\Delta^2u = f & {\rm in}~~\Omega,\\
		u = 0, \quad \dfrac{\partial u}{\partial \bm n}=0& {\rm on}~~\partial \Omega,
	\end{array}\right.
\end{equation}
where $\bm{n} = (n_1,n_2)^T$ is the unit outer normal to $\partial \Omega$, $\Delta$ is the standard Laplacian operator. Throughout the article we assume $u \in H^{k+1}(\Omega)$ with $k\ge 2$.

We first introduce some notations and symbols frequently used in this paper. For a bounded Lipschitz domain $D$ of dimension $d ~(d=1,2)$, the symbol $( v, w)_D = \int_D v w \d x$ denotes the $L^2$-inner product on $D$, $\|\cdot\|_{0,D}$ or $\|\cdot\|_{D}$ denotes the $L^2$-norm, and $|\cdot|_{s,D}$ is the $H^s(D)$-seminorm. If $D=\Omega$, for simplicity, we abbreviate $\|\cdot\|_{\Omega}$ as $\|\cdot\|$. For vectorial functions $\bm{v} = (v_1,v_2)^T$ and $\bm{w} = (w_1,w_2)^T$, the inner product is defined as $( \bm{v}, \bm{w})_D = \int_D (v_1w_1 + v_2 w_2) \d x$.
For all integer $k\ge 0$, $\mathbb{P}_k(D)$ is the set of polynomials of degree $\le k$ on $D$. The set of scaled monomials $\mathbb{M}_r(D)$ is given by
\[\mathbb  M_{r} (D):= \Big \{ \Big ( \frac{\boldsymbol x -  \boldsymbol x_D}{h_D}\Big )^{\boldsymbol  s}, \quad |\boldsymbol  s|\le r\Big \},\]
with the generic monomial  denoted by $m_{D}$, where $h_D$ is the diameter of $D$, $\boldsymbol  x_D$ is the centroid of $D$, and $r$ is a non-negative integer. For the multi-index ${\boldsymbol{s}} \in {\mathbb{N}^d}$, we follow the usual notation
\[\boldsymbol{x}^{\boldsymbol{s}} = x_1^{s_1} \cdots x_d^{s_d},\quad |\boldsymbol{s}| = s_1 +  \cdots  + s_d.\]
Conventionally, $\mathbb  M_r (D) =\{0\}$ for $r\le -1$.

Let $\{\mathcal{T}_h\}$ be a family of decompositions of $\Omega$ into polygonal elements $\{K\}$;
$h_K={\rm diam}(K)$ and $h=\max_{K\in\mathcal{T}_h}h_K$.
Let $\mathcal{V}_h$ be the union of all vertices and $\mathcal{E}_h$ be the set of all the edges in $\mathcal{T}_h$. For each element $K\in\mathcal{T}_h$, we denote by $\mathcal{V}(K)$ and $\mathcal{E}(K)$ the sets of vertices and edges of $K$, respectively.
Moreover, denote all interior vertices (resp. edges) by $\mathcal{V}_h^0$ (resp. $\mathcal{E}_h^0$). The union of boundary edges in $\mathcal{T}_h$ is denoted by $\mathcal{E}_h^\partial$.

Let $e \in \mathcal{E}_h^0$ be an interior edge shared by two neighbouring elements $K^-$ and $K^+$. The unit normal vector $\bb{n}_e$ associated with $e$ is defined as the outward unit normal of $K = K^-$ pointing from $K^-$ to $K^+$.
Let $v$ be a scalar function defined on $e$. We introduce the jump and average of $v$ on $e$ by $[v] = v^- - v^+$ and $\{v\} = \frac12 (v^- + v^+)$, where $v^-$ and $v^+$ are the traces of $v$ on $e$ from the interior and exterior of $K$, respectively. On a boundary edge, we define $[ v ] = v$ and $\{ v \} = v $. From the definition of the averages and jumps, by a direct manipulation, one has $[ v w ]=[v]\{w\}+\{v\}[w]$ for $e\in \mathcal{E}_h^0$ and $[vw] = vw = \{v\}[w] = [v]\{w\}$ for a boundary edge, leading to
\begin{equation}\label{magic}
	\sum_{e\in \mathcal{E}_h}\int_e [vw] \d s = \sum_{e\in \mathcal{E}_h^0} \int_e [v]\{w\}\d s  + \sum_{e\in \mathcal{E}_h} \int_e \{v\}[w] \d s.
\end{equation}	
Moreover, for any two quantities $a$ and $b$, ``$a\lesssim b$" indicates ``$a\le C b$" with the constant $C$ independent of the mesh size $h_K$, and ``$a\eqsim b$" abbreviates ``$a\lesssim b\lesssim a$".

The variational formulation of \eqref{strongform} with homogeneous boundary value conditions reads: Find $u\in V:=H_0^2(\Omega)$ such that
\begin{equation}\label{origion}
	a(u,v)=(f,v),\quad  v\in V,
\end{equation}
where $a(u,v) = (\nabla^2u,\nabla^2v)$.
To keep the flexibility of meshes, we will work on meshes satisfying the following assumption (cf. \cite{Brezzi-Buffa-Lipnikov-2009,Chen-HuangJ-2018}):
\begin{enumerate}
	\item [\textbf{H}.]
	For each $K\in\mathcal{T}_h$, there exists a ``virtual triangulation" $\mathcal{T}_K$
	of $K$ such that $\mathcal{T}_K$ is uniformly shape regular and quasi-uniform.
	The corresponding mesh size of $\mathcal{T}_K$ is proportional to $h_K$.
   Each edge of $K$ is a side of a certain triangle in $\mathcal{T}_K$.
\end{enumerate}
As shown in \cite{Chen-HuangJ-2018}, this condition covers the usual geometric assumptions frequently used in the context of VEMs.
Under this geometric assumption, we can establish some fundamental results in VEM analysis as used in \cite{Huang2021YuMedius}, with the constants hidden in the symbol $\lesssim$ depending only on the shape regularity and quasi-uniformity of the virtual triangulation ${\mathcal{T}_K}$.

According to the standard Dupont-Scott theory (cf. \cite{BS2008}), for all $v\in H^l(K)$ ($0\le l \le k$) there exists
a certain $q \in \mathbb{P}_{l-1}(K)$ such that
\begin{equation}\label{BHe1}
	|v - q|_{m,K} \lesssim h_K^{l - m} | v |_{l,K},\quad  m\le l.
\end{equation}

The following inequalities are very useful for our forthcoming analysis.

\begin{lemma}
	For any $K\in \mathcal{T}_h$, it holds
\begin{itemize}
  \item Trace inequality \cite{BS2008}:
  \begin{equation}
		\| v \|_{0,\partial K} \lesssim  h_K^{1/2}| v |_{1,K} +h_K^{ - 1/2}\| v \|_{0,K},\quad v \in H^1(K), \label{trace}
	\end{equation}
  \item Sobolev inequality \cite{Brenner-Guan-Sung-2017}:
  	\begin{equation}
		\| v \|_{\infty,K} \lesssim  h_K| v |_{2,K} +|v|_{1,K}+h_K^{-1}\| v \|_{0,K},\quad v \in H^2(K),\label{Sobolev}
	\end{equation}
  \item Poincar\'{e}-Friedriches inequality \cite{Brenner2003}:
  	\begin{align}
	& \| v \|_{0,K} \lesssim  h_K| v |_{1,K} + \Big|\int_{\partial K}v\,{\rm d}s\Big|,\quad v \in H^1(K).\label{poincare}
%\\
    %& \| v \|_{0,K}   \lesssim h_K | v |_{1,K} + h_K^{-1}\Big| \int_K v \mathrm{d}x \Big|,\quad v \in H^1(K). \label{poincareK}
   \end{align}
\end{itemize}	
\end{lemma}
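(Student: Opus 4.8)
The plan is to treat the three inequalities as classical facts (they are precisely the statements of \cite{BS2008,Brenner-Guan-Sung-2017,Brenner2003}) and to verify that the hidden constants remain uniform under assumption \textbf{H}. The unifying device is a scaling argument carried out on the virtual triangulation $\mathcal{T}_K$: for a triangle $T\in\mathcal{T}_K$ with $h_T\eqsim h_K$, I map $T$ affinely onto a fixed reference triangle $\hat T$ and record the homogeneities $\|\hat v\|_{0,\hat T}=h_T^{-1}\|v\|_{0,T}$, $|\hat v|_{1,\hat T}=|v|_{1,T}$, $|\hat v|_{2,\hat T}=h_T|v|_{2,T}$, $\|\hat v\|_{0,\partial\hat T}=h_T^{-1/2}\|v\|_{0,\partial T}$, where $\hat v$ is the pullback of $v$. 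The reference-element inequalities, valid with an absolute constant on $\hat T$, then transfer to each $T$; shape regularity and quasi-uniformity of $\mathcal{T}_K$ make all the $h_T$ comparable to $h_K$ and bound the number of triangles, so summing or taking maxima over $\mathcal{T}_K$ produces the element-wise estimates.

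For the trace inequality I would start from $\|\hat v\|_{0,\partial\hat T}\lesssim\|\hat v\|_{0,\hat T}+|\hat v|_{1,\hat T}$ on $\hat T$, rescale to obtain $\|v\|_{0,\partial T}^2\lesssim h_T^{-1}\|v\|_{0,T}^2+h_T|v|_{1,T}^2$, and then invoke the part of \textbf{H} stating that every edge of $K$ is a side of some $T\in\mathcal{T}_K$ to sum this over the relevant triangles, collapsing $\sum_T\|v\|_{0,T}^2$ and $\sum_T|v|_{1,T}^2$ into the norms over $K$; this gives \eqref{trace}. For the Sobolev inequality I would use the embedding $H^2(\hat T)\hookrightarrow C^0(\overline{\hat T})$ valid in two dimensions, rescale it to $\|v\|_{\infty,T}\lesssim h_T|v|_{2,T}+|v|_{1,T}+h_T^{-1}\|v\|_{0,T}$, and then observe that $v|_T\in H^2(T)$ for every $T\in\mathcal{T}_K$ and $\|v\|_{\infty,K}=\max_{T\in\mathcal{T}_K}\|v\|_{\infty,T}$, each right-hand norm being dominated by the corresponding norm over $K$; this yields \eqref{Sobolev}.

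The Poincar\'e--Friedrichs inequality is the step I expect to be the main obstacle, because the right-hand side involves the mean of $v$ over the whole boundary $\partial K$ rather than data attached to a single triangle. My plan is to split $v=(v-\bar v)+\bar v$ with $\bar v:=|\partial K|^{-1}\int_{\partial K}v\,\d s$; the constant part is immediate since $|K|\eqsim h_K^2$ and $|\partial K|\eqsim h_K$ under \textbf{H} give $\|\bar v\|_{0,K}=|K|^{1/2}|\partial K|^{-1}\big|\int_{\partial K}v\,\d s\big|\lesssim\big|\int_{\partial K}v\,\d s\big|$. The real work is the boundary-mean-zero Poincar\'e estimate $\|v-\bar v\|_{0,K}\lesssim h_K|v|_{1,K}$: here I would apply the triangle-wise Poincar\'e inequality $\|v-v_T\|_{0,T}\lesssim h_T|v|_{1,T}$ (with $v_T$ the mean over $T$), and then run a chaining argument across face-adjacent triangles of the connected triangulation $\mathcal{T}_K$, controlling each difference $|v_T-v_{T'}|$ via the trace inequality on the shared edge, so as to compare every $v_T$, and hence $\bar v$, against a fixed reference mean; quasi-uniformity keeps the chain length and the scaling factors bounded, and summation over $\mathcal{T}_K$ closes the estimate. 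Adding the two contributions gives \eqref{poincare}. If one prefers to bypass the chaining bookkeeping, an acceptable shortcut is to invoke directly the piecewise Poincar\'e--Friedrichs results of \cite{Brenner2003} on the triangulation $\mathcal{T}_K$.
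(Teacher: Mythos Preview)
The paper does not supply a proof of this lemma at all: it is stated as a collection of known inequalities, each item carrying a citation to the literature, and the constants are asserted to be uniform under assumption~\textbf{H} by the remark preceding the lemma. So there is no ``paper's own proof'' to compare against; your proposal goes well beyond what the paper does by actually sketching why the constants are mesh-independent.

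Your argument is correct and is the standard way to justify these estimates on general polygons satisfying~\textbf{H}: pull back to a reference triangle, record the scaling of each seminorm, and then assemble the triangle-wise bounds over the virtual triangulation $\mathcal{T}_K$. The trace and Sobolev steps are unproblematic. For the Poincar\'e--Friedrichs inequality your decomposition $v=(v-\bar v)+\bar v$ and the chaining argument across face-adjacent triangles in $\mathcal{T}_K$ is exactly the right idea, and your fallback of quoting the piecewise Poincar\'e results of \cite{Brenner2003} on $\mathcal{T}_K$ is also legitimate. One small point worth making explicit in the chaining step: you need connectedness of $\mathcal{T}_K$ (which follows from $K$ being a polygon and $\mathcal{T}_K$ a triangulation of it) and a uniform bound on the number of triangles in $\mathcal{T}_K$, both of which are consequences of~\textbf{H}.
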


    \section{The interior penalty virtual element method} \label{sec:IPVEM}
	
	This section reviews the interior penalty virtual element space developed in \cite{ZMZW2023IPVEM}.
	
	\subsection{The IP virtual element spaces}
	
	In the construction, the authors in \cite{ZMZW2023IPVEM} first introduced a $C^1$-conforming virtual element space
	\begin{align*}
		& \widetilde{V}_{k+2}(K)=\Big\{v \in H^2(K): \Delta^2 v \in \mathbb{P}_k(K), v|_e \in \mathbb{P}_{k+2}(e),
		 \partial_{\bm n} v|_e \in \mathbb{P}_{k+1}(e), e \subset \partial K\Big\}, \quad k \ge 2.
	\end{align*}
	This local space can be equipped with the following degrees of freedom (DoFs) (cf. \cite{Brezzi-Marini-2013,Chinosi-Marini-2016}):
	\begin{itemize}
		\item  $\tilde{\bm\chi}^p:$ the values of $v$ at the vertices of $K$,
		\begin{equation}\label{dof1}
			\tilde{\chi}_z^p(v) = v(z), \quad \mbox{$z$ is a vertex of $K$}.
		\end{equation}
		
		\item  $\tilde{ \bm \chi}^g$ : the values of $h_z \nabla v$ at the vertices  of $K$,
		\begin{equation}\label{dof2}
			\tilde{\chi}_z^g(v) = h_z \nabla v(z), \quad \mbox{$z$ is a vertex of $K$},
		\end{equation}
		where $h_z$ is a characteristic length attached to each vertex $z$, for instance, the average of the diameters of the elements having $z$ as a vertex.
		\item  $\tilde{\bm \chi}^e$ : the moments of $v$ on edges up to degree $k-2$,
		\begin{equation}\label{dof3}
			\tilde{\chi}_e(v) = |e|^{-1}(m_e, v)_e, \quad m_e \in \mathbb{M}_{k-2}(e), \quad e \subset\partial K.
		\end{equation}
		\item  $\tilde{\bm \chi}^n$ : the moments of $\partial_{\bm n_e} v$ on edges up to degree $k-1$,
		\begin{equation}\label{dof4}
			\tilde{\chi}_e^n(v) = (m_e,\partial_{\bm n_e} v)_e, \quad  m_e \in \mathbb{M}_{k-1}(e), \quad e \subset \partial K.
		\end{equation}
		\item  $\tilde{\bm \chi}^K$ : the moments on element $K$ up to degree $k$,
		\begin{equation}\label{dof5}
			\tilde{\chi}_K(v) = |K|^{-1}(m_K,v)_K, \quad m_K \in \mathbb{M}_k(K).
		\end{equation}
	\end{itemize}
Referring to \cite[Lemma 3.4]{ZMZW2023IPVEM}, we have the following lemma.
\begin{lemma}[Inverse inequality]
    For any $K\in \mathcal{T}_h$, for $m=1,2$ and $0\leq s\leq m$, there holds
    \begin{equation}\label{inverse_inequality}
        |v_h|_{m,K}\lesssim h_K^{s-m}|v_h|_{s,K},\quad v_h\in \widetilde{V}_{k+2}(K).
    \end{equation}
\end{lemma}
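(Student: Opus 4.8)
The plan is to reduce the statement to two ``unit-step'' estimates and then exploit the geometric assumption \textbf{H}. Since $\mathbb{P}_{m-1}(K)\subseteq \widetilde{V}_{k+2}(K)$ for $m=1,2$, the case $s=m$ is trivial, and once we have shown
\[
|v_h|_{1,K}\lesssim h_K^{-1}\|v_h\|_{0,K},\qquad |v_h|_{2,K}\lesssim h_K^{-1}|v_h|_{1,K},\qquad v_h\in\widetilde{V}_{k+2}(K),
\]
the remaining case $m=2,\ s=0$ follows by composing the two inequalities. Under the dilation $\boldsymbol x\mapsto(\boldsymbol x-\boldsymbol x_K)/h_K$, which maps $\widetilde{V}_{k+2}(K)$ onto the analogous space on the rescaled element, preserves all polynomial degrees entering the definition of the space, and — thanks to \textbf{H} — leaves the shape-regularity and quasi-uniformity constants of the virtual triangulation unchanged, we may assume $h_K\eqsim 1$ and it suffices to bound $|v_h|_{m,K}$ by $|v_h|_{m-1,K}$ with a constant depending only on those shape parameters.

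On the rescaled element both $|v_h|_{m-1,K}$ and $|v_h|_{m,K}$ vanish exactly when $v_h\in\mathbb{P}_{m-1}(K)$, so both are norms on the finite-dimensional quotient $\widetilde{V}_{k+2}(K)/\mathbb{P}_{m-1}(K)$, and the task becomes proving these norms uniformly equivalent. The route I would take uses the characterization of virtual functions: $v_h\in\widetilde{V}_{k+2}(K)$ is the unique $H^2(K)$ solution of the biharmonic Dirichlet problem $\Delta^2 v_h=p\in\mathbb{P}_k(K)$ in $K$ with boundary data $v_h|_{\partial K}$ (piecewise in $\mathbb{P}_{k+2}$) and $\partial_{\bm n}v_h|_{\partial K}$ (piecewise in $\mathbb{P}_{k+1}$) on the shape-regular edge partition of $\partial K$. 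Writing $v_h=v_h^\partial+v_h^0$ with $v_h^\partial\in H^2(K)$ a controlled extension of the boundary data and $v_h^0\in H^2_0(K)$, the Lax--Milgram/coercivity estimate on $H^2_0(K)$ gives
\[
\|v_h\|_{2,K}\lesssim \|v_h^\partial\|_{2,K}+\|p\|_{(H^2_0(K))'}\lesssim \|v_h^\partial\|_{2,K}+\|\Delta^2 v_h\|_{0,K}.
\]
The interior load $\Delta^2 v_h\in\mathbb{P}_k(K)$ is, after testing against scaled monomials and integrating by parts twice, a combination of the interior moments of $v_h$ of degree $\le k$ and of its boundary traces, while $\|v_h^\partial\|_{2,K}$ is controlled by the edge degrees of freedom through the polynomial inverse and trace inequalities on the (shape-regular) edges and on $\mathcal{T}_K$; subtracting off a suitable element of $\mathbb{P}_{m-1}(K)\subseteq\widetilde{V}_{k+2}(K)$ then lets all of these be absorbed into $|v_h|_{m-1,K}$. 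This yields $|v_h|_{m,K}\le\|v_h\|_{2,K}\lesssim|v_h|_{m-1,K}$ on the quotient space, which is the claim.

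The main obstacle is precisely this last equivalence: the function $v_h$ is genuinely virtual (not explicitly polynomial), and controlling it — in particular its interior load $\Delta^2 v_h$ — by quantities that are themselves bounded by a lower-order seminorm of $v_h$ is where assumption \textbf{H} is indispensable, since it supplies the fictitious finite-element space on $\mathcal{T}_K$ through which the scaled equivalence between $\|\cdot\|_{2,K}$ on $\widetilde{V}_{k+2}(K)$ and the $\ell^2$-norm of the appropriately scaled degrees of freedom is obtained with constants depending only on the shape parameters. A slightly softer alternative for the same step is a compactness argument over the precompact family of admissible rescaled element shapes, which rules out a sequence along which $|v_h|_{m,K}/|v_h|_{m-1,K}$ blows up. In either case all hidden constants depend only on the shape regularity and quasi-uniformity of $\mathcal{T}_K$, in agreement with \cite[Lemma 3.4]{ZMZW2023IPVEM}.
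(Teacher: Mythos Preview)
The paper does not supply its own proof of this lemma; it merely cites \cite[Lemma~3.4]{ZMZW2023IPVEM}. Your overall strategy---reduce to one-step estimates, rescale to unit diameter, and then invoke a stability bound for the virtual biharmonic problem with constants tracked through the auxiliary triangulation~$\mathcal{T}_K$---is the route taken in that reference, so in spirit you are aligned with the intended argument.

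One correctable slip: your claim that ``both $|v_h|_{m-1,K}$ and $|v_h|_{m,K}$ vanish exactly when $v_h\in\mathbb{P}_{m-1}(K)$'' is wrong. The kernel of $|\cdot|_{m-1,K}$ is $\mathbb{P}_{m-2}(K)$, not $\mathbb{P}_{m-1}(K)$; in particular $|\cdot|_{m-1,K}$ is not even well-defined on the quotient you name. The repair is immediate: quotient by $\mathbb{P}_{m-2}(K)$ (for $m=1$ no quotient is needed, as $\|\cdot\|_{0,K}$ is already a norm), observe that $|\cdot|_{m-1,K}$ is a norm there while $|\cdot|_{m,K}$ is only a seminorm, and bound the seminorm by the norm on the finite-dimensional space.

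Your ``softer alternative'' via compactness over admissible rescaled shapes should be treated with caution. Even under assumption~\textbf{H} one must argue that the family of unit-diameter polygons is genuinely precompact in a topology for which the map $K\mapsto\widetilde{V}_{k+2}(K)$ and the associated norm-equivalence constant behave continuously; for virtual (non-explicit) shape functions this is not automatic. The explicit route you outline first---an $H^2$-controlled extension of the polynomial boundary data on the shape-regular edge partition, plus the Lax--Milgram bound for the $H^2_0$ remainder, with the interior load $\Delta^2 v_h\in\mathbb{P}_k(K)$ recovered from the DoFs---is the argument that actually delivers uniform constants and is how the cited lemma is established.
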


	Given $v_h \in \widetilde{V}_{k+2}(K)$, the usual definition of the $H^1$-elliptic projection $\Pi_K^{\nabla} v_h \in$ $\mathbb{P}_k(K)$ is described by the following equations:
	\begin{equation}\label{H1def}
		\left\{\begin{aligned}
			(\nabla \Pi_K^{\nabla} v_h, \nabla q)_K  & = (\nabla v_h, \nabla q)_K, \quad q \in \mathbb{P}_k(K), \\
			\sum\limits_{z \in \mathcal{V}_K} \Pi_K^{\nabla} v_h(z) & = \sum\limits_{z \in \mathcal{V}_K} v_h(z),
		\end{aligned}\right.
	\end{equation}
	where $\mathcal{V}_K$ is the set of the vertices of $K$.
	By checking the right-hand side of the integration by parts formula
	\[(\nabla v_h, \nabla q)_K = -(v_h, \Delta q)_K + \sum\limits_{e\subset \partial K} \int_e v_h \partial_{\bm n_e} q\, \mathrm{d} s,\]
one can find that this elliptic projection can be computed using the DoFs in \eqref{dof1}--\eqref{dof5}.
	However, the goal of the IPVEM is to make $\Pi_K^\nabla v_h$ computable by only using the DoFs of $H^1$-conforming virtual element spaces given by (cf. \cite{Beirao-Brezzi-Cangiani-2013,Ahmad-Alsaedi-Brezzi-2013})
	\[V_h^{1,c}(K): = \{ v \in H^1(K): \Delta v|_K \in \mathbb{P}_{k - 2}(K)~~{\text{in}}~~K,\quad  v|_{\partial K} \in \mathbb{B}_k(\partial K) \},\]
	where
	\[\mathbb{B}_k(\partial K): = \{ v \in C(\partial K):  v|_e \in \mathbb{P}_k(e), \quad e \subset \partial K \},\]
	and the corresponding global virtual element space is
	\[
	V_h^{1,c}=\{v\in C(\overline{\Omega}):  v|_K\in V_h^{1,c}(K), \, K\in \mathcal{T}_h\}\cap H_0^1(\Omega).
	\]
	To do so, Ref.~\cite{ZMZW2023IPVEM} considered an approximation of the right-hand side by some numerical formula. The modified $H^1$-projection is defined as
	\[
	\left\{\begin{aligned}
		(\nabla \Pi_K^{\nabla} v_h, \nabla q)_K & = -(v_h, \Delta q)_K + \sum\limits_{e\subset \partial K} Q_{2k-1}^e( v_h \partial_{\bm n_e} q), \quad q \in \mathbb{P}_k(K), \notag\\
		\sum_{z \in \mathcal{V}_K} \Pi_K^{\nabla} v_h(z) & =\sum_{z \in \mathcal{V}_K} v_h(z),\label{MH1}
	\end{aligned}\right.
	\]
	with
	\[ Q_{2k-1}^e v := |e| \sum\limits_{i=0}^k \omega_i v(\bm{x}_i^e) \approx \int_e v(s) \d s,\]
	where $(\omega_i, \bm{x}_i^e)$ are the $(k+1)$ Gauss-Lobatto quadrature weights and points with $\bm{x}_0^e$ and $\bm{x}_k^e$ being the endpoints of $e$.
	
	With the help of $\Pi_K^\nabla$, we can employ the standard enhancement technique by substituting the redundant DoFs of $v$ with those of $\Pi_K^{\nabla}v$ \cite{Ahmad-Alsaedi-Brezzi-2013,ZMZW2023IPVEM}. The local interior penalty space is defined as
	\begin{align*}
		&V_k(K)= \{v \in \widetilde{V}_{k+2}(K):  \tilde{\chi}^g(v)=\tilde{\chi}^g (\Pi_K^{\nabla} v ), \tilde{\chi}^n(v)=\tilde{\chi}^n (\Pi_K^{\nabla} v ),\\
       &\hspace{4cm}  (v,q)_K = (\Pi_K^{\nabla}v,q)_K\quad q\in \mathbb{M}_k(K) \backslash \mathbb{M}_{k-2}(K)\} \quad k\ge 2,
	\end{align*}
	The associated DoFs are given by
	\begin{itemize}
		\item  $ \bm \chi^p:$ the values of $v(z)$, $z \in \mathcal{V}_K$;
		\item $ \bm \chi^e:$ the values of $v(\boldsymbol{x}_i^e)$, $i=1,2 \cdots, k-1$, $e \subset \partial K$;
		\item $\bm \chi^K$ : the moments $|K|^{-1}( m_K, v)_K$, $m_K \in \mathbb{M}_{k-2}(K)$.
	\end{itemize}
	Furthermore, we use $V_h$ to denote the global space of nonconforming virtual element by
\begin{align*}
			V_h & = \{v \in L^2(\Omega): v|_K \in V_k(K),  ~K \in \mathcal{T}_h,  \, \text{$v$ is continuous at each Gauss-Lobatto point} \\
			& \hspace{0.2cm} \text{of interior edges and vanishes at each Gauss-Lobatto point of boundary edges}\}.
	\end{align*}
Denote by $I_h: H^2(\Omega) \to V_h$  the interpolation operator. As shown in  Lemma 3.11 of \cite{ZMZW2023IPVEM}, we can derive the following interpolation error estimate.
\begin{lemma}\label{lem:interpIPVEM}
	For any $v\in H^{\ell}(K)$ with $2\le \ell\le k+1$, it holds
	\begin{equation} \label{interp}
		|v-I_hv|_{m,K}\lesssim h_K^{\ell-m}|v|_{\ell,K},\quad K\in \mathcal{T}_h,\quad  m=0,1,2.
	\end{equation}	
\end{lemma}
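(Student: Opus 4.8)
The plan is a Bramble--Hilbert-type argument combining polynomial approximation with the polynomial reproduction and the stability of $I_h$. First, by the Dupont--Scott theory underlying \eqref{BHe1}, applied with exponent $\ell$, I would pick $q\in\mathbb P_{\ell-1}(K)$ with $|v-q|_{j,K}\lesssim h_K^{\ell-j}|v|_{\ell,K}$ for $0\le j\le\ell$; note $\ell\le k+1$ gives $q\in\mathbb P_k(K)$. Next I would verify $\mathbb P_k(K)\subset V_k(K)$: a polynomial of degree $\le k$ lies in $\widetilde V_{k+2}(K)$, and because the $(k+1)$-point Gauss--Lobatto rule is exact on $\mathbb P_{2k-1}(e)$ the modified projector reproduces it, $\Pi_K^\nabla q=q$, so the enhancement constraints defining $V_k(K)$ hold automatically; by unisolvence of the DoFs $\bm\chi^p,\bm\chi^e,\bm\chi^K$ one then obtains $I_hq=q$. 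With $w:=v-q$, the triangle inequality and $I_hq=q$ give $|v-I_hv|_{m,K}\le |v-q|_{m,K}+|I_hw|_{m,K}$, so the task reduces to the local stability estimate $|I_hw|_{m,K}\lesssim h_K^{2-m}|w|_{2,K}+h_K^{1-m}|w|_{1,K}+h_K^{-m}\|w\|_{0,K}$ for $m=0,1,2$; inserting the Dupont--Scott bounds on $w$, the powers of $h_K$ combine exactly to $h_K^{\ell-m}|v|_{\ell,K}$, which is the claim.

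For the stability estimate I would expand $I_hw=\sum_i\chi_i(w)\phi_i$ in the dual basis $\{\phi_i\}\subset V_k(K)$ of the degrees of freedom. Under the geometric assumption \textbf{H}, and following the arguments of \cite{Huang2021YuMedius} and of \cite[Lemma~3.11]{ZMZW2023IPVEM}, the dual basis functions satisfy $\|\phi_i\|_{0,K}\lesssim h_K$ --- one characterizes $\phi_i$ as the solution of a biharmonic-type problem on $K$ with polynomial data fixed by its DoFs and estimates it by means of \eqref{trace}, \eqref{poincare} and the virtual triangulation --- so that the inverse inequality \eqref{inverse_inequality} yields $|\phi_i|_{m,K}\lesssim h_K^{1-m}$. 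Since the number of DoFs on $K$ is bounded under \textbf{H}, it then suffices to bound each $\chi_i(w)$: the vertex and Gauss--Lobatto edge values are majorized by $\|w\|_{\infty,K}$ and hence, via the Sobolev inequality \eqref{Sobolev}, by $h_K|w|_{2,K}+|w|_{1,K}+h_K^{-1}\|w\|_{0,K}$, while $|K|^{-1}|(m_K,w)_K|\lesssim h_K^{-1}\|w\|_{0,K}$ by Cauchy--Schwarz and $\|m_K\|_{0,K}\lesssim h_K$. Collecting these gives $|I_hw|_{m,K}\lesssim h_K^{1-m}\bigl(h_K|w|_{2,K}+|w|_{1,K}+h_K^{-1}\|w\|_{0,K}\bigr)$, which is the desired bound. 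The global estimate then follows by summing over $K\in\mathcal T_h$, the local interpolants being consistent at shared Gauss--Lobatto nodes since $v\in H^2(\Omega)$ has single-valued traces, so that $I_hv\in V_h$ is well defined.

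The main obstacle is the dual-basis scaling $\|\phi_i\|_{0,K}\lesssim h_K$, equivalently the norm equivalence between the scaled $H^2$-norm on $V_k(K)$ and the Euclidean norm of the DoF vector: this is the only genuinely technical ingredient, and it is where the virtual-triangulation hypothesis \textbf{H} enters in an essential way --- by passing from $K$ to the sub-triangulation $\mathcal T_K$ and invoking classical trace and inverse estimates there. Everything else --- the polynomial reproduction $I_hq=q$, the elementary DoF bounds, the bookkeeping of the powers of $h_K$, and the passage to the global estimate --- is routine; one should only keep in mind that the pointwise DoFs are well defined on $H^\ell(K)$ because $\ell\ge2$ and $H^2(K)\hookrightarrow C^0(\overline K)$ in two dimensions, as quantified by \eqref{Sobolev}.
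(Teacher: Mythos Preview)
The paper does not actually prove this lemma: it simply defers to Lemma~3.11 of \cite{ZMZW2023IPVEM}. Your proposal is correct and is precisely the standard Bramble--Hilbert argument one expects there --- polynomial reproduction $I_hq=q$ for $q\in\mathbb P_k(K)\subset V_k(K)$ (your Gauss--Lobatto exactness observation showing $\Pi_K^\nabla q=q$ is the right way to verify the enhancement constraints), reduction to a local stability bound via the triangle inequality, and then the dual-basis scaling $\|\phi_i\|_{0,K}\lesssim h_K$ combined with the elementary DoF bounds through \eqref{Sobolev}. You have also correctly identified that the only genuinely technical ingredient is the norm equivalence $\|v\|_{0,K}\eqsim h_K\|\bm\chi(v)\|_{\ell^2}$ on $V_k(K)$, which is exactly what the cited works \cite{Huang2021YuMedius,ZMZW2023IPVEM} establish under hypothesis~\textbf{H}; the paper records the analogous equivalences \eqref{Norm_Equiv_Qk_H2K}--\eqref{NormEquivL2} just below.
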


	As usual, we can define the $H^2$-projection operator $\Pi_K^{\Delta}: V_k(K) \to \mathbb{P}_k(K)$ by finding the solution $\Pi_K^{\Delta} v \in \mathbb{P}_k(K)$ of
	\begin{equation}\label{def}
		\begin{cases}
			a^K (\Pi_K^{\Delta} v, q )=a^K(v, q),\quad  q \in \mathbb{P}_k(K), \\
			\widehat{\Pi_K^{\Delta} v}=\widehat{v}, \quad \widehat{\nabla \Pi_K^{\Delta} v}=\widehat{\nabla v}
		\end{cases}
	\end{equation}
	for any given $v \in V_k(K)$, where the quasi-average $\widehat{v}$ is defined by
	\[\widehat{v}=\frac{1}{|\partial K|} \int_{\partial K} v \d s .\]

\begin{lemma}\label{lem:bound}
For every $v\in V_k(K)$, there hold
\begin{align*}
 |\Pi_K^{\nabla}v|_{m,K}\lesssim |v|_{m,K} \quad \mbox{and}\quad |\Pi_K^{\Delta}v|_{m,K} \lesssim |v|_{m,K},
\end{align*}
where $m=0,1,2$.
\end{lemma}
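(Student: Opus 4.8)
The plan is to treat the two projectors separately, since they are defined by nearly identical variational structures but against different bilinear forms.

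For the $H^1$-elliptic projector, I would argue as follows. The case $m=1$ is immediate: taking $q=\Pi_K^\nabla v$ in the first equation of \eqref{H1def} (which is legitimate since $\Pi_K^\nabla v\in\mathbb{P}_k(K)$) gives $|\Pi_K^\nabla v|_{1,K}^2=(\nabla v,\nabla\Pi_K^\nabla v)_K\le|v|_{1,K}|\Pi_K^\nabla v|_{1,K}$, hence $|\Pi_K^\nabla v|_{1,K}\le|v|_{1,K}$. The case $m=0$ is the crux for this projector: we must control $\|\Pi_K^\nabla v\|_{0,K}$, and for this I would use the Poincar\'e--Friedrichs inequality \eqref{poincare} applied to $\Pi_K^\nabla v$, which bounds $\|\Pi_K^\nabla v\|_{0,K}$ by $h_K|\Pi_K^\nabla v|_{1,K}+|\int_{\partial K}\Pi_K^\nabla v\,\d s|$. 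The first term is already controlled by the $m=1$ estimate; for the boundary integral I would invoke the normalization (second equation of \eqref{H1def}) together with a scaled trace/equivalence-of-norms argument on the polynomial $\Pi_K^\nabla v$, relating $\int_{\partial K}\Pi_K^\nabla v$ to $\sum_{z\in\mathcal V_K}\Pi_K^\nabla v(z)=\sum_{z\in\mathcal V_K}v(z)$, and then bounding the latter by $\|v\|_{\infty,K}$ (finitely many vertices), which by the Sobolev inequality \eqref{Sobolev} is $\lesssim h_K|v|_{2,K}+|v|_{1,K}+h_K^{-1}\|v\|_{0,K}$; but since we want a bound purely by $|v|_{0,K}$, the cleaner route is to first subtract the mean and exploit that the estimate for $m=0$ can only be expected up to the constant functions being handled by the normalization — so I would instead track the argument through scaled monomials. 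For $m=2$, note $\Delta^2(\Pi_K^\nabla v)$ has degree $\le k$ so inverse estimates on polynomials give $|\Pi_K^\nabla v|_{2,K}\lesssim h_K^{-1}|\Pi_K^\nabla v|_{1,K}$; however this loses a factor $h_K^{-1}$ and does not give $\lesssim|v|_{2,K}$ directly, so instead I would use that $\Pi_K^\nabla$ reproduces polynomials in $\mathbb P_k(K)$ (in particular $\mathbb P_1$), subtract a best linear approximant $q_1\in\mathbb P_1(K)$ of $v$ as in \eqref{BHe1}, apply the $m=0,1$ bounds to $v-q_1$, and then the polynomial inverse inequality \eqref{inverse_inequality} (valid on $\widetilde V_{k+2}(K)\supset\mathbb P_k(K)$) to convert $|\Pi_K^\nabla(v-q_1)|_{1,K}$ or $|v-q_1|_{1,K}$ into the desired $|v|_{2,K}$-scale, finally re-adding $q_1$ whose second derivatives vanish.

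For the $H^2$-projector $\Pi_K^\Delta$ the reasoning is parallel but uses the bilinear form $a^K(\cdot,\cdot)=(\nabla^2\cdot,\nabla^2\cdot)_K$. Taking $q=\Pi_K^\Delta v\in\mathbb P_k(K)$ in the first equation of \eqref{def} yields $|\Pi_K^\Delta v|_{2,K}\le|v|_{2,K}$ directly, settling $m=2$. For $m=1$ and $m=0$, the second (quasi-average) condition of \eqref{def} must be used: $\widehat{\Pi_K^\Delta v}=\widehat v$ and $\widehat{\nabla\Pi_K^\Delta v}=\widehat{\nabla v}$ pin down the affine part. I would write $\Pi_K^\Delta v = p_1 + w$ where $p_1$ is affine with $\widehat{p_1}=\widehat v$, $\widehat{\nabla p_1}=\widehat{\nabla v}$ (so $|p_1|_{1,K}\lesssim$ boundary average of $|\nabla v|$, controlled via \eqref{trace} by $|v|_{2,K}+h_K^{-1}|v|_{1,K}$, and similarly for $\|p_1\|_{0,K}$), and $w$ has vanishing quasi-averages of itself and its gradient; on $w$ a Poincar\'e-type inequality (iterating \eqref{poincare}, using \eqref{trace} to handle the boundary terms, since the quasi-averages of $w$ and $\nabla w$ vanish) gives $|w|_{1,K}\lesssim h_K|w|_{2,K}\le h_K|\Pi_K^\Delta v|_{2,K}+h_K|p_1|_{2,K}=h_K|\Pi_K^\Delta v|_{2,K}\lesssim h_K|v|_{2,K}$ and $\|w\|_{0,K}\lesssim h_K^2|v|_{2,K}$. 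Combining and comparing scales then yields $|\Pi_K^\Delta v|_{m,K}\lesssim|v|_{m,K}$ for $m=0,1$.

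The main obstacle, in both cases, is the low-order ($m=0$, and for $\Pi^\Delta$ also $m=1$) estimate: the variational identities only directly control the highest seminorm, so one must propagate that control downward through Poincar\'e--Friedrichs-type inequalities while simultaneously controlling the "rigid-body" part fixed by the vertex-sum or quasi-average normalizations, and one must be careful that these normalization functionals (point values at vertices, boundary averages) are bounded by the correct $|v|_{m,K}$-seminorm rather than by a stronger norm — this is exactly where the Sobolev inequality \eqref{Sobolev}, the trace inequality \eqref{trace}, and subtraction of a suitable polynomial via \eqref{BHe1} must be combined carefully, with all constants tracked to be $h_K$-independent under assumption \textbf{H}.
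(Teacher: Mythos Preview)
There is a genuine gap in your treatment of $\Pi_K^\nabla$. Your $m=1$ argument takes $q=\Pi_K^\nabla v$ in \eqref{H1def} to obtain $|\Pi_K^\nabla v|_{1,K}^2=(\nabla v,\nabla\Pi_K^\nabla v)_K$, but in this paper $\Pi_K^\nabla$ denotes the \emph{modified} $H^1$-projection defined through the Gauss--Lobatto quadrature $Q_{2k-1}^e$ (see the display immediately after \eqref{H1def}), not the exact projection of \eqref{H1def}. For $v\in V_k(K)$ one has $v|_e\in\mathbb{P}_{k+2}(e)$, so $v\,\partial_{\bm n_e}q|_e$ has degree $2k+1$ while $Q_{2k-1}^e$ is exact only on $\mathbb{P}_{2k-1}(e)$; hence the identity $(\nabla\Pi_K^\nabla v,\nabla q)_K=(\nabla v,\nabla q)_K$ fails in general, and your Cauchy--Schwarz step collapses. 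Controlling the quadrature discrepancy requires additional machinery (norm equivalences and properties of the Gauss--Lobatto interpolant), which is precisely why the paper does not reprove this bound but cites Corollary~3.7 of \cite{ZMZW2023IPVEM}. Since your $m=0$ and $m=2$ arguments for $\Pi_K^\nabla$ both rest on the $m=1$ estimate, the gap propagates.

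For $\Pi_K^\Delta$ your decomposition $\Pi_K^\Delta v=p_1+w$ is workable but more laborious than the paper's route. The paper simply writes $|\Pi_K^\Delta v|_{1,K}\le|v-\Pi_K^\Delta v|_{1,K}+|v|_{1,K}$, applies the Poincar\'e--Friedrichs inequality \eqref{poincare} componentwise to $\nabla(v-\Pi_K^\Delta v)$ (legitimate since $\widehat{\nabla(v-\Pi_K^\Delta v)}=0$ by \eqref{def}) to get $|v-\Pi_K^\Delta v|_{1,K}\lesssim h_K|v-\Pi_K^\Delta v|_{2,K}\lesssim h_K|v|_{2,K}$, and then invokes the inverse inequality \eqref{inverse_inequality} on $v$ itself (valid because $v\in V_k(K)\subset\widetilde V_{k+2}(K)$) to conclude $h_K|v|_{2,K}\lesssim|v|_{1,K}$. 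This last step --- applying the inverse inequality to the virtual function $v$, not to a polynomial --- is the shortcut you are missing; it bypasses all bookkeeping of the affine part and the boundary-average bounds on $p_1$. The case $m=0$ then follows by the same pattern with one more iteration.
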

\begin{proof}
The first inequality is presented in Corollary 3.7 of \cite{ZMZW2023IPVEM}. For the second one, it is simple to find that $|\Pi_K^{\Delta}v|_{2,K} \lesssim |v|_{2,K}$. For $m=1$, by the triangle inequality, the Poincar\'{e}-Friedrichs inequality \eqref{poincare}, the boundedness of $|\Pi_K^{\Delta}v|_{2,K}$ and the inverse inequality \eqref{inverse_inequality}, we derive
\begin{align*}
|\Pi_K^{\Delta}v|_{1,K}
&  \le |v - \Pi_K^{\Delta}v |_{1,K} + |v|_{1,K} \lesssim h_K|v - \Pi_K^{\Delta}v |_{2,K} + |v|_{1,K}\notag\\
&\lesssim h_K|v|_{2,K}+|v|_{1,K}\lesssim |v|_{1,K}.
\end{align*}
The case of $m=0$ can be deduced in the similar manner.
\end{proof}

\subsection{The discrete IP bilinear form}
	Following the standard procedure in the VEMs (namely, we need to consider the computability, the $k$-consistency \eqref{consistent} and the stability \eqref{stable}), we define the discrete bilinear form as
	\[a_h^K(v, w)=a^K(\Pi_K^{\Delta} v, \Pi_K^{\Delta} w)+ h_K^{-2} S^K(v-\Pi_K^{\Delta} v, w-\Pi_K^{\Delta} w), \quad v, w \in V_k(K),\]
	with
	\[
	S^K(v, w) = \sum\limits_{i=1}^{n_K} \chi_i(v) \chi_i(w),
	\]
	where $\{\chi_i\}$ are the local DoFs on $K$ with $n_K$ being the number of the DoFs. As was done in \cite{FY2023IPVEM}, we define
	\begin{align}
	& J_1(v,w)=   \sum_{e\in\mathcal{E}_h}\frac{\lambda_e}{|e|}\int_e\Big[\frac{\partial  \Pi_h^\nabla v}{\partial \bm{n}_e}\Big]\Big[\frac{\partial \Pi_h^\nabla w}{\partial \bm{n}_e}\Big] \d s \label{J1},\\
    & J_2(v,w)=-\sum_{e \in \mathcal{E}_h}\int_e\Big\{\frac{\partial^2\Pi_h^\nabla v}{\partial \bm{n}_e^2}\Big\}\Big[\frac{\partial \Pi_h^\nabla w}{\partial \bm{n}_e}\Big] \d s = -\sum_{e \in \mathcal{E}_h}\int_e\Big\{\frac{\partial^2\Pi_h^\nabla v}{\partial \bm{n}_e^2}\Big\}\Big[\frac{\partial  w}{\partial \bm{n}_e}\Big] \d s, \nonumber\\
    & J_3(v,w)=-\sum_{e\in\mathcal{E}_h}\int_e\Big\{\frac{\partial^2 \Pi_h^\nabla w}{\partial \bm{n}_e^2}\Big\}\Big[\frac{\partial  \Pi_h^\nabla v}{\partial \bm{n}_e}\Big] \d s = -\sum_{e\in\mathcal{E}_h}\int_e\Big\{\frac{\partial^2 \Pi_h^\nabla w}{\partial \bm{n}_e^2}\Big\}\Big[\frac{\partial v}{\partial \bm{n}_e}\Big] \d s. \nonumber
	\end{align}
	Here and below, we define the piecewise $H^1$ and $H^2$ projectors $\Pi_h^\nabla$ and $\Pi_h^\Delta$ by setting $\Pi_h^\nabla |_K=\Pi_K^\nabla$ and $\Pi_h^\Delta |_K=\Pi_K^\Delta$ for all $K\in \mathcal{T}_h$.

	 The bilinear form is
	\begin{equation}\label{BilinearAh}
		\mathcal{A}_h(u_h,v_h) = a_h(u_h,v_h)+J_1(u_h,v_h)+J_2(u_h,v_h)+J_3(u_h,v_h),
	\end{equation}
	where $a_h(v,w)=\sum_{K\in \mathcal{T}_h} a_h^K(v,w)$. The discrete scheme of IPVEM solving problem \eqref{strongform} is to find $u_h \in V_h$ such that
	\begin{equation}\label{IPVEM}
		\mathcal{A}_h(u_h,v_h) = F_h(v_h),\quad v_h\in V_h,
	\end{equation}The right-hand side is
	\[F_h(v_h): = \langle f_h,v_h\rangle
%	 + \sum_{e\in \mathcal{E}_h^\partial} \int_e\Big(
%	% - \Big[\frac{\partial u}{\partial \bm{n}_e}\Big]
%    -g_N\frac{\partial^2 \Pi_h^\nabla v_h}{\partial \bm{n}_e^2}
%	+ g_N\frac{\lambda_e}{|e|} \frac{\partial \Pi_h^\nabla v_h}{\partial \bm{n}_e}
%	\Big)\d s
,\]
	where
	\begin{align}\label{eq:rhs}
		\langle f_h,v_h \rangle:	=
		\sum\limits_{K\in \mathcal{T}_h}\int_{K} f\,\Pi_{0,K}^k v_h \d x= \sum\limits_{K\in \mathcal{T}_h}\int_{K} \Pi_{0,K}^kf\, v_h \d x ,
	\end{align}
	with $\Pi_{0,K}^k$ being the $L^2$ projector onto $\mathbb{P}_k(K)$. 	For $f\in H^{k-2}(\Omega)$ with $k\geq 2$, we have
	\begin{align*}
		\langle f-f_h,v_h\rangle&=\sum_{K\in \mathcal{T}_h}(f,v_h-\Pi_{0,K}^kv_h)=\sum_{K\in \mathcal{T}_h}(f-\Pi_{0,K}^kf,v_h-\Pi_{0,K}^kv_h) \notag\\
		&\lesssim h^{k-2}\|f\|_{k-2}h^2|v_h|_{2,h}\lesssim h^{k}\|f\|_{k-2}|v_h|_{2,h},
	\end{align*}
	which means
	\begin{equation}\label{right}
		\|f-f_h\|_{V'_h}\lesssim h^{k} \|f\|_{k-2}.
	\end{equation}

\begin{remark}\label{rem:C0issue}
As observed in \cite{FY2023IPVEM}, despite the discontinuity, the interior penalty virtual elements display behaviors similar to $C^0$-continuous elements in the a priori estimate. We will demonstrate that this observation also holds for the a posteriori estimate. Therefore, we do not include the penalty term
\[\sum_{e\in\mathcal{E}_h}\frac{\lambda_e^3}{|e|^3}\int_e [v] [w] \d s\]
in the discrete scheme and introduce the error estimator
\begin{equation}\label{eta0}
\eta_{0,K} = \Big(\sum\limits_{e\in \mathcal{E}(K)\cap \mathcal{E}_h}  \eta_{0,e}^2\Big)^{1/2}, \qquad
		\eta_{0,e} =\frac{1}{|e|^{3/2}}\| [ u_h ]\|_{0,e}
\end{equation}
 in the a posteriori estimate, as will be discussed later.
\end{remark}

	In what follows, we define
	\begin{equation} \label{normm}
		\|w\|_h^2:=|w|_{2,h}^2+J_1(w,w).
	\end{equation}
 Referring to \cite{FY2023IPVEM}, the mesh-dependent parameter $\lambda_e\eqsim 1$. Still, we can deduce the stability result by using the same arguments.  We omit the details with the results described as follows.
	\begin{itemize}
		\item[-] $k$-consistency: for all $v \in V_k(K)$ and $q \in \mathbb{P}_k(K)$, it holds that
		\begin{equation}\label{consistent}
			a_h^K(v, q)=a^K(v, q) .
		\end{equation}
		\item[-] Stability: there exist two positive constants $\alpha_*$ and $\alpha^*$, independent of $h$, such that
		\begin{align}
			& \alpha_*a^K(v, v) \le a_h^K(v, v) \le  \alpha^* a^K(v, v) \label{stable}
		\end{align}
		for all $v \in V_k(K)$.
	\end{itemize}

	For later use, we provide the following norm equivalence.
\begin{lemma}
For every $v\in V_k(K)$, there hold
\begin{align}
& |v-\Pi_K^{\Delta}v|_{2,K} \eqsim  h_K^{-1}  \|\bm\chi(v-\Pi_K^{\Delta}v)\|_{\ell^2},   \label{Norm_Equiv_Qk_H2K} \\
& |v-\Pi_K^{\nabla}v|_{1,K} \eqsim   \|\bm\chi(v-\Pi_K^{\nabla}v)\|_{\ell^2}, \label{NormEquivH1}\\
& h_K^{-1} \|v-\Pi_K^{\nabla}v\|_{0,K} \eqsim   \|\bm\chi(v-\Pi_K^{\nabla}v)\|_{\ell^2}.\label{NormEquivL2}
\end{align}
\end{lemma}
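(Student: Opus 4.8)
The plan is to treat the two directions of each estimate separately and, for the forward direction (bounding the degree-of-freedom vector by the function seminorm), to estimate the point-evaluation DoFs and the moment DoFs one at a time. Fix $K\in\mathcal{T}_h$ and let $w$ denote the relevant difference, i.e.\ $w:=v-\Pi_K^{\Delta}v$ for \eqref{Norm_Equiv_Qk_H2K} and $w:=v-\Pi_K^{\nabla}v$ for \eqref{NormEquivH1}--\eqref{NormEquivL2}. Since $\mathbb{P}_k(K)\subset V_k(K)$ and both projectors reproduce $\mathbb{P}_k(K)$ --- for $\Pi_K^{\nabla}$ because the $(k+1)$-point Gauss--Lobatto rule $Q_{2k-1}^e$ integrates $v_h\,\partial_{\bm{n}_e}q$ exactly for $v_h,q\in\mathbb{P}_k$ --- we have $w\in V_k(K)$ and $\Pi_K^{\Delta}w=0$, resp.\ $\Pi_K^{\nabla}w=0$. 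Estimate \eqref{Norm_Equiv_Qk_H2K} is then immediate from the stability \eqref{stable}: since $\Pi_K^{\Delta}w=0$, the definition of $a_h^K$ gives
\[a_h^K(w,w)=a^K(\Pi_K^{\Delta}w,\Pi_K^{\Delta}w)+h_K^{-2}S^K(w-\Pi_K^{\Delta}w,w-\Pi_K^{\Delta}w)=h_K^{-2}\|\bm\chi(w)\|_{\ell^2}^2,\]
while $a^K(w,w)=|w|_{2,K}^2$; substituting these into \eqref{stable} and taking square roots yields \eqref{Norm_Equiv_Qk_H2K}.

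For \eqref{NormEquivH1} with $w=v-\Pi_K^{\nabla}v$ we have, besides $\Pi_K^{\nabla}w=0$, the vertex-sum normalization $\sum_{z\in\mathcal{V}_K}w(z)=0$ from \eqref{H1def}. Combining this normalization with the Poincar\'e--Friedrichs inequality \eqref{poincare}, the Sobolev inequality \eqref{Sobolev} and the inverse inequality \eqref{inverse_inequality} gives the auxiliary bound $\|w\|_{0,K}\lesssim h_K|w|_{1,K}$; the only slightly delicate point here is passing from the vertex-sum normalization to control of the integral mean of $w$ over $K$. The forward direction $\|\bm\chi(w)\|_{\ell^2}\lesssim|w|_{1,K}$ then follows DoF by DoF: the vertex values $w(z)$ and the Gauss--Lobatto edge values $w(\bm{x}_i^e)$ are all bounded by $\|w\|_{\infty,K}$, which by \eqref{Sobolev}, \eqref{inverse_inequality} and the auxiliary bound is $\lesssim|w|_{1,K}$, while each moment $|K|^{-1}(m_K,w)_K$ is bounded, via the Cauchy--Schwarz inequality and $|m_K|\le 1$, by $|K|^{-1/2}\|w\|_{0,K}\lesssim|w|_{1,K}$; as the number $n_K$ of DoFs of $K$ is uniformly bounded under assumption \textbf{H}, summing gives the bound. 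Granting \eqref{NormEquivH1}, estimate \eqref{NormEquivL2} follows at once: $\|w\|_{0,K}\lesssim h_K|w|_{1,K}$ gives $h_K^{-1}\|w\|_{0,K}\lesssim|w|_{1,K}\eqsim\|\bm\chi(w)\|_{\ell^2}$, and the inverse inequality \eqref{inverse_inequality} gives $\|\bm\chi(w)\|_{\ell^2}\eqsim|w|_{1,K}\lesssim h_K^{-1}\|w\|_{0,K}$.

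It remains to prove the reverse direction $|w|_{1,K}\lesssim\|\bm\chi(w)\|_{\ell^2}$ of \eqref{NormEquivH1}, i.e.\ that a function of $V_k(K)$ lying in the kernel of $\Pi_K^{\nabla}$ is controlled in the $H^1$-seminorm by its degree-of-freedom vector. This is the classical virtual-element stability bound, and it is the only place where assumption \textbf{H} and its virtual triangulation $\mathcal{T}_K$ are genuinely needed; I would prove it following \cite{Chen-HuangJ-2018,Huang2021YuMedius} (equivalently, one establishes the companion bound $\|w\|_{0,K}\lesssim h_K\|\bm\chi(w)\|_{\ell^2}$ for $w\in V_k(K)$ and combines it with \eqref{inverse_inequality}). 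This is the step I expect to be the main obstacle: unlike every other estimate above, it cannot be obtained from the trace/Sobolev/Poincar\'e/inverse inequalities, since it must ``invert'' the degree-of-freedom map on a space that is not a polynomial space. The remaining work --- the passage from the vertex-sum normalization in the Poincar\'e step, and the bookkeeping of powers of $h_K$ --- is routine.
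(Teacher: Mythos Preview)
Your proposal is correct, and for \eqref{Norm_Equiv_Qk_H2K} your direct appeal to the stability bound \eqref{stable} is in fact cleaner than what the paper does (the paper simply cites \cite[Lemma~4.1]{ZZ24} for \eqref{Norm_Equiv_Qk_H2K} and \eqref{NormEquivH1}). The interesting difference is in how you obtain the auxiliary Poincar\'e bound $\|v-\Pi_K^{\nabla}v\|_{0,K}\lesssim h_K|v-\Pi_K^{\nabla}v|_{1,K}$. You work with the vertex-sum normalization of $\Pi_K^{\nabla}$ and rightly flag the passage from $\sum_{z}w(z)=0$ to a mean-value condition usable in \eqref{poincare} as ``slightly delicate.'' The paper sidesteps this entirely by a detour through $\Pi_K^{\Delta}$: since $\Pi_K^{\Delta}$ is normalized via the boundary average $\widehat{\cdot}$, one has $\int_{\partial K}(v-\Pi_K^{\Delta}v)\,\mathrm{d}s=0$, so \eqref{poincare} gives $\|v-\Pi_K^{\Delta}v\|_{0,K}\lesssim h_K|v-\Pi_K^{\Delta}v|_{1,K}$ directly; the $L^2$- and $H^1$-boundedness of the two projectors (Lemma~\ref{lem:bound}) together with $\Pi_K^{\nabla}\Pi_K^{\Delta}=\Pi_K^{\Delta}$ and $\Pi_K^{\Delta}\Pi_K^{\nabla}=\Pi_K^{\nabla}$ then transfer this to $v-\Pi_K^{\nabla}v$, which is exactly the chain \eqref{L2H1}. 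Your route is more self-contained (it never leaves $\Pi_K^{\nabla}$) but needs the vertex-sum-to-mean bookkeeping; the paper's route is shorter but presupposes Lemma~\ref{lem:bound}. One more minor difference: for the direction $\|\bm\chi(w)\|_{\ell^2}\lesssim h_K^{-1}\|w\|_{0,K}$ of \eqref{NormEquivL2} you go through \eqref{NormEquivH1} and the inverse inequality, whereas the paper obtains it by a direct DoF-by-DoF calculation in the style of \cite{Huang2021YuMedius}.
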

\begin{proof}

The first and second equations are presented in \cite[Lemma 4.1]{ZZ24}. Following the approach in \cite{Huang2021YuMedius}, it is straightforward to derive the lower bound estimate $\|\bm\chi(v-\Pi_K^{\nabla}v)\|_{\ell^2} \lesssim h_K^{-1} \|v-\Pi_K^{\nabla}v\|_{0,K}$ through direct calculations.

For the upper bound estimate, according to the boundedness of $\Pi_h^\nabla$ and $\Pi_h^\Delta$ in Lemma \ref{lem:bound} and the Poincar\'{e}-Friedrichs inequality \eqref{poincare}, one has
\begin{align}
\|v - \Pi_h^{\nabla}v\|_{0,K}
& \le \|v - \Pi_h^{\Delta}v\|_{0,K} + \|\Pi_h^{\Delta}v-\Pi_h^{\nabla}v\|_{0,K} \nonumber\\
& = \|v - \Pi_h^{\Delta}v\|_{0,K} + \|\Pi_h^{\nabla}(\Pi_h^{\Delta}v - v)\|_{0,K}
 \lesssim \|v-\Pi_h^{\Delta}v\|_{0,K} \nonumber\\
& \lesssim h_K |v - \Pi_h^{\Delta}v|_{1,K} \le h_K |v - \Pi_h^{\nabla}v|_{1,K} + h_K |\Pi_h^{\Delta}(\Pi_h^{\nabla}v - v)|_{1,K} \nonumber\\
& \lesssim h_K|v - \Pi_h^{\nabla}v|_{1,K}. \label{L2H1}
\end{align}
This along with the second formula yields the desired result.
\end{proof}

	\section{Enriching operator}\label{enriching}
	
	This section focuses on proposing and analyzing an enriching operator $E_h$ from $V_h$ to $H^2$-conforming virtual element spaces.
	The enriching operator is a special kind of quasi-interpolation operator, which connects a finite-dimensional space to another one with higher-order regularity. We refer to \cite{Brenner2003,Brenner-Wang-Zhao-2004,Gudi2010,Mao-Shi-2010,Hu-Rui-Shi-2014,Brenner2005SungC0IP} and the references therein for the construction and application of such operators in finite element methods. Such an operator in the virtual element context was first proposed in \cite{Huang2021YuMedius} and applied in the medius error analysis of some nonconforming VEMs. This idea was also adopted in \cite{ChenHuangLin2022avem} for the a posteriori error estimation for a $C^1$ virtual element method of Kirchhoff
	plates.
	
	Let us recall the $H^2$-conforming virtual element space $V_k^{2,c}(K)$ introduced in \cite{Brezzi-Marini-2013,Chinosi-Marini-2016}. For $k\ge 3$, define
	\[{V_k^{2,c}}(K)= \{ v \in H^2(K): \Delta ^2v|_K \in \mathbb{P}_{k - 4}(K),~ v|_e \in \mathbb{P}_k(e),~ \partial_{\boldsymbol n} v|_e \in {\mathbb{P}_{k - 1}}(e),~e \subset \partial K \},\]
	while for the lowest order $k=2$, the space is modified as
	\[V_2^{2,c}(K)= \Big\{ v \in H^2(K): \Delta ^2v |_K = 0,~ v |_e \in \mathbb{P}_3(e),~ \partial_{\boldsymbol n}v |_e \in \mathbb{P}_1(e),~e \subset \partial K \Big\}.\]
	The DoFs are:
	\begin{itemize}
		\item The values of $v(z)$ at the vertices of $K$.
		\item The values of $h_z\partial _1v(z)$ and $h_z\partial _2v(z)$ at the vertices of $K$, where $h_z$ is a characteristic length attached to each vertex $z$, for instance, the average of the diameters of the elements having $z$ as a vertex.
		\item The moments of $v$ on edges up to degree $k-4$,
		\[\chi _e(v) = | e |^{-1}(m_e,v)_e,\quad m_e \in \mathbb{M}_{k - 4}(e).\]
		\item The moments of $\partial_{\boldsymbol n}v$ on edges up to degree $k-3$,
		\[\chi _{n_e}(v) = (m_e, \partial_{\boldsymbol n}v)_e,\quad m_e \in \mathbb{M}_{k - 3}(e).\]
		\item The moments on element $K$ up to degree $k-4$,
		\[\chi _K(v) = | K |^{-1}(m_K, v)_K,\quad m_K \in \mathbb{M}_{k - 4}(K).\]
	\end{itemize}
	Note that the above moments vanish in the lowest-order case $k=2$.

	The global virtual element spaces are then defined as
	\[V_h^{2,c}: = \Big\{ v \in H_0^2(\Omega ) \cap C^1(\overline{\Omega} ): v|_K \in V_k^{2,c}(K),~~K \in \mathcal{T}_h \Big\}.\]
	% Let ${\mathcal{V}_h^0}$ and $\mathcal{E}_h^0$ denote the interior vertices and edges of $\mathcal{T}_h$, respectively.
	The global dual space is defined as
	\[\mathcal{N} = \text{span}\Big\{ \mathcal{N}_a,\mathcal{N}_\nabla,\mathcal{N}_e^{k - 4},\mathcal{N}_{n_e}^{k - 3},\mathcal{N}_K^{k - 4} \Big\}\]
	with global DoFs given by
	\begin{itemize}
		\item $\mathcal{N}_a$: the values at the interior vertices of $\mathcal{T}_h$,
		\[\mathcal{N}_a^z(v) = v(z),\quad z \in \mathcal{V}_h^0.\]
		\item $\mathcal{N}_\nabla$: the gradient values at interior vertices of $\mathcal{T}_h$,
		\[\mathcal{N}_\nabla ^z(v) = h_z\nabla v(z),\quad z \in \mathcal{V}_h^0.\]
		\item ${\mathcal{N}  _e^{k - 4}}$: the moments of $v$ on interior edges up to degree $k-4$,
		\[\mathcal{N} _e(v) = | e |^{ - 1}(m_e,v)_e,\quad m_e \in \mathbb{M}_{k - 4}(e),\quad e \in \mathcal{E}_h^0.\]
		\item $\mathcal{N}_{n_e}^{k - 3}$: the moments of $\partial_{\boldsymbol n}v$ on interior edges up to degree $k-3$,
		\[\mathcal{N} _{n_e}(v) = (m_e,\partial_{\boldsymbol n}v)_e,\quad m_e \in \mathbb{M}_{k - 3}(e),\quad e \in \mathcal{E}_h^0.\]
		\item $\mathcal{N}  _K^{k-4}$: the moments on element $K$ up to degree $k-4$,
		\[\mathcal{N} _K(v) = | K |^{ - 1}(m_K,v)_K,\quad m_K \in \mathbb{M}_{k-4}(K).\]
	\end{itemize}
	Relabel the d.o.f.s by a single index $i = 1, 2,  \ldots, N^{2,c} := \dim V_h^{2,c}$, which are associated with a set of shape basis functions $\{\Phi_j\}$ of $V_h^{2,c}$ such that $\mathcal{N}_i(\Phi_j) = \delta_{ij}$ for $i,j = 1, \cdots, N^{2,c}$. Then every function $v\in V_h^{2,c}$ can be expressed as
	\begin{equation}\label{gexpa}
		v(x) = \sum_{i=1}^{N^{2,c}}\mathcal{N}_i(v)\Phi_i(x).
	\end{equation}

	For a vertex $z$ of $\mathcal{T}_h$, let $\omega (z)$ denote the union of all elements in $\mathcal{T}_h$ sharing the point $z$. For an edge $e$ in $\mathcal{T}_h$, let $\omega (e)$ be the union of all elements in $\mathcal{T}_h$ sharing the edge $e$. Let $N(z)$ and $N(e)$ denote the number of elements in $\omega(z)$ and $\omega (e)$, respectively.

	For every nonconforming VEM function $\phi  \in V_h$, we shall construct an associated conforming counterpart $E_h\phi\in V_h^{2,c}\cap H_0^2(\Omega)$  by using the global expansion \eqref{gexpa}, i.e.,
	\[(E_h\phi )(x) = \sum\limits_{i = 1}^{N^{2,c}} \mathcal{N}_i (E_h\phi )\Phi _i(x).\]
	Here, the values of the DoFs are determined as follows:
	\begin{enumerate}
		\item For the values at interior vertices,
		\[\mathcal{N}_a^z(E_h\phi)=(E_h\phi )(z): = \frac{1}{N(z)}\sum\limits_{K' \in \omega (z)} \Pi_h^{\nabla} \phi |_{K'}(z) \quad z\in \mathcal{V}_h^0.\]
		\item For the gradient values at interior vertices,
		\[\mathcal{N}_\nabla ^z(E_h\phi ):=\frac{1}{N(z)}\sum\limits_{K' \in \omega (z)} h_z\nabla \Pi_h^{\nabla}\phi|_{K'}(z)\quad z\in \mathcal{V}_h^0.\]
		\item For the moments of $v$ on interior edges,
		\[\mathcal{N}_e(E_h\phi ): = \frac{1}{N(e)}\sum\limits_{K' \in \omega (e)} | e |^{ - 1}\int_e m_e\Pi_h^{\nabla}\phi |_{K'} \mathrm{d}s,\quad m_e\in \mathbb{M}_{k-4}(e) .\]
		\item For the moments of ${\partial _{\boldsymbol{n}}}v$ on interior edges,
		\[\mathcal{N}_{n_e}(E_h\phi ): = \frac{1}{N(e)}\sum\limits_{K' \in \omega (e)} \int_e m_e\partial _{\boldsymbol{n}}\Pi_h^{\nabla}\phi |_{K'} {\text{d}}s,\quad m_e \in \mathbb{M}_{k - 3}(e).\]
		\item For the moments on element $K$,
		\[\mathcal{N}_K(E_h\phi ): = |K|^{ - 1}\int_K m_K \Pi_h^{\nabla}\phi|_K {\mathrm{d}}x,\quad m_K \in \mathbb{M}_{k - 4}(K).\]
	\end{enumerate}
	
\begin{lemma}\label{lem:enriching}
For any $v\in V_h$ and $E_h v$ defined above, there holds
\begin{equation}\label{C0}
|v-E_hv|_{2,h}^2 \lesssim \sum_{K\in \mathcal{T}_h } h_K^{-2}S^K(v-\Pi_h^\nabla v, v-\Pi_h^\nabla v)  + \sum_{e\in \mathcal{E}_h}
			\Big( \frac{1}{|e|^3}\|[v]\|_{0,e}^2 + \frac{1}{|e|}\Big\|\Big[ \frac{\partial v}{\partial \bm n_e}\Big]\Big\|_{0,e}^2 \Big).
\end{equation}
In addition,
\begin{equation}\label{jump0}
\sum_{e\in \mathcal{E}_h} \frac{1}{|e|^3}\|[v]\|_{0,e}^2 \lesssim \sum_{K\in \mathcal{T}_h } h_K^{-2}S^K(v-\Pi_h^\nabla v, v-\Pi_h^\nabla v),
\end{equation}
\begin{equation}\label{jump1}
\sum_{e\in \mathcal{E}_h} \frac{1}{|e|}\Big\|\Big[ \frac{\partial v}{\partial \bm n_e}\Big]\Big\|_{0,e}^2
\lesssim \sum_{K\in \mathcal{T}_h } h_K^{-2}S^K(v-\Pi_h^\nabla v, v-\Pi_h^\nabla v) +
\sum_{e\in \mathcal{E}_h} \frac{1}{|e|}\Big\|\Big[ \frac{\partial \Pi_h^\nabla v}{\partial \bm n_e}\Big]\Big\|_{0,e}^2 .
\end{equation}
	\end{lemma}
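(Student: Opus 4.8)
The plan is to estimate $|v-E_hv|_{2,h}$ element by element using a standard scaling (inverse-inequality) argument for virtual/polytopal elements, reducing the seminorm of the difference to the DoF-values of $v-E_hv$, and then bounding those DoF-differences by jumps of $v$ (and of its normal derivative) across edges, plus the stabilization term $h_K^{-2}S^K(v-\Pi_h^\nabla v,\cdot)$. First I would fix $K\in\mathcal{T}_h$ and note that $v|_K-E_hv|_K\in H^2(K)$ with $\Delta^2$ of it a polynomial; invoking the shape regularity of the virtual triangulation $\mathcal{T}_K$ and the inverse inequality \eqref{inverse_inequality} (applied on the conforming space, or the analogous bound for $V_k^{2,c}(K)$), I get $|v-E_hv|_{2,K}^2 \lesssim h_K^{-2}\|\bm\chi_K(v-E_hv)\|_{\ell^2}^2$ up to the usual rescaled DoF norm; here it is convenient to compare $v$ and $E_hv$ both against $\Pi_K^\nabla v$, writing $v-E_hv=(v-\Pi_K^\nabla v)-(E_hv-\Pi_K^\nabla v)$, since the DoFs of $E_hv$ were defined precisely as local averages of DoFs of $\Pi_h^\nabla v$. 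The first piece contributes exactly $h_K^{-2}S^K(v-\Pi_h^\nabla v,v-\Pi_h^\nabla v)$ after recognizing $S^K$ as the squared $\ell^2$-DoF norm.

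The core of the argument is the second piece: each DoF of $E_hv$ at an interior vertex $z$ (or on an interior edge $e$, or inside an element) is an average over $\omega(z)$ (resp. $\omega(e)$) of the corresponding quantity evaluated from $\Pi_h^\nabla\phi|_{K'}$. Subtracting the value coming from $K$ itself, the difference $\mathcal{N}_i(E_hv)-\mathcal{N}_i(\Pi_K^\nabla v)$ is a sum over neighbours $K'$ of differences $\Pi_{K'}^\nabla v(z)-\Pi_K^\nabla v(z)$ (and gradient/edge-moment analogues), which telescope across shared edges into jumps $[\Pi_h^\nabla v]$ and $[\partial_{\bm n_e}\Pi_h^\nabla v]$ on the edges adjacent to $z$; for boundary vertices and boundary edges the homogeneous boundary condition of $V_h^{2,c}$ forces the DoF to vanish, and the jump convention $[v]=v$ on $\partial\Omega$ makes this consistent. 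Using trace/inverse inequalities on edges one bounds, say, $|[\Pi_h^\nabla v](z)|^2 \lesssim |e|^{-1}\|[\Pi_h^\nabla v]\|_{0,e}^2 + |e|\,|[\partial_s \Pi_h^\nabla v]\|_{0,e}^2$, but it is cleaner to keep vertex values as point evaluations controlled directly via the DoF-norm equivalences \eqref{NormEquivH1}, \eqref{NormEquivL2} and then write $[\Pi_h^\nabla v]=[\Pi_h^\nabla v - v]+[v]$ on each edge; the $[v]$ part is the genuine jump of $v$, and $[\Pi_h^\nabla v - v]=[(\Pi_{K^-}^\nabla v - v)] - [(\Pi_{K^+}^\nabla v - v)]$ is controlled, after a trace inequality, by $h_{K}^{-1}\|v-\Pi_h^\nabla v\|_{0,K}+\dots$, i.e. again by $S^K(v-\Pi_h^\nabla v,\cdot)$ via \eqref{NormEquivL2}. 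The same manipulation applied to $[\partial_{\bm n_e}\Pi_h^\nabla v]$ produces the term $|e|^{-1}\|[\partial_{\bm n_e}\Pi_h^\nabla v]\|_{0,e}^2$ in \eqref{jump1} together with stabilization terms. Summing over all $K$ and all edges, using that each element has a bounded number of vertices/edges and each edge/vertex belongs to boundedly many elements (assumption \textbf{H} and shape regularity), yields \eqref{C0}.

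For \eqref{jump0}: on an interior edge $e$ shared by $K^\pm$, the function $v$ is continuous at the Gauss–Lobatto points of $e$ by definition of $V_h$ (and vanishes at those of boundary edges), so $[v]$ vanishes at $k+1$ points on $e$; since $[v]|_e$ is a polynomial of degree $k+2$ restricted to each side... more precisely $[v]|_e = v^- - v^+$ with each trace a polynomial of degree $k+2$, so $[v]|_e\in\mathbb{P}_{k+2}(e)$ vanishing at $k+1$ nodes — this does not kill it outright, but one instead estimates $\|[v]\|_{0,e}$ by $\|[v-\Pi_h^\nabla v]\|_{0,e}$ (since $\Pi_h^\nabla v$'s trace jump is itself bounded by the quantity we want... ) — actually the clean route is: $\|[v]\|_{0,e}\le \|v^- - \Pi_{K^-}^\nabla v\|_{0,e}+\|v^+-\Pi_{K^+}^\nabla v\|_{0,e}+\|[\Pi_h^\nabla v]\|_{0,e}$, bound the first two by the trace inequality and \eqref{NormEquivL2}, \eqref{NormEquivH1} (giving $h_K^{-2}S^K$ after multiplying by $|e|^{-3}$), and bound $\|[\Pi_h^\nabla v]\|_{0,e}$ by a standard scaled-vertex argument again reducing to $S^K$ because $\Pi_h^\nabla$ is computed from the $H^1$-conforming DoFs which $v$ matches up to the enhancement constraints — so no genuine jump of $v$ survives, reflecting the ``quasi-$C^0$'' behaviour. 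Inequality \eqref{jump1} then follows immediately from the chain already assembled for \eqref{C0}, simply retaining the $[\partial_{\bm n_e}\Pi_h^\nabla v]$ contribution on the right-hand side rather than discarding it.

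The main obstacle I anticipate is the bookkeeping in the telescoping/averaging step: carefully tracking how the difference between $E_hv$'s averaged DoF and the single-element value of $\Pi_K^\nabla v$ decomposes into edge jumps of $\Pi_h^\nabla v$ (with correct signs and with the boundary DoFs vanishing), and then peeling off $[v]$ from $[\Pi_h^\nabla v]$ uniformly in a way that the residual $[\Pi_h^\nabla v - v]$ is genuinely absorbed into the stabilization $h_K^{-2}S^K(v-\Pi_h^\nabla v,\cdot)$ via the norm equivalences \eqref{Norm_Equiv_Qk_H2K}–\eqref{NormEquivL2} and the trace inequality \eqref{trace}. Everything else is routine scaling; the only subtlety is ensuring all hidden constants depend solely on the shape regularity of $\mathcal{T}_K$ and the valence bounds $N(z),N(e)\lesssim 1$.
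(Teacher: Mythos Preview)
Your plan for \eqref{C0} and \eqref{jump1} is essentially the paper's proof: split $v-E_hv=(v-\Pi_K^\nabla v)+(\Pi_K^\nabla v-E_hv)$, control the first piece by $h_K^{-2}S^K$ via the inverse inequality and \eqref{NormEquivH1}, and for the second piece (which lies in $V_k^{2,c}(K)$) reduce to DoF-differences that telescope into edge jumps of $\Pi_h^\nabla v$, then peel off $[\Pi_h^\nabla v-v]$ from $[v]$ using the trace inequality and the norm equivalences. That is exactly what the paper does, including the boundary-vertex treatment and the decomposition $|[v]|_{1,e}\lesssim\|[\partial_{\bm n_e}v]\|_{0,e}+h_e^{-1}\|[v]\|_{0,e}$.

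The gap is in your argument for \eqref{jump0}. Writing $\|[v]\|_{0,e}\le\|v^--\Pi_{K^-}^\nabla v\|_{0,e}+\|v^+-\Pi_{K^+}^\nabla v\|_{0,e}+\|[\Pi_h^\nabla v]\|_{0,e}$ is fine, but you then need to bound $\|[\Pi_h^\nabla v]\|_{0,e}$ by the stabilization alone, and your ``scaled-vertex argument'' does not do this: $\Pi_{K^\pm}^\nabla v$ are computed from \emph{all} the DoFs on $K^\pm$, not just those on $e$, so there is no reason the two polynomials agree on $e$, and going back through $[v]$ is circular. The Gauss--Lobatto observation (that $[v]|_e\in\mathbb P_{k+2}(e)$ vanishes at $k+1$ nodes) also does not close by itself.

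The paper resolves this by introducing the $H^1$-conforming interpolant $I_h^c v\in V_h^{1,c}$. Since $I_h^c v$ is globally $C^0$, one has $[v]=[v-I_h^c v]$; then the trace inequality gives $\|[v]\|_{0,e}\lesssim\sum_{K^\pm}h_K^{-1/2}\|v-I_h^c v\|_{0,K}$. The crucial point is that the IP space $V_k(K)$ and the $H^1$-conforming space $V_h^{1,c}(K)$ share the \emph{same} DoFs $\bm\chi$, so $\bm\chi(I_h^c v)=\bm\chi(v)$, and hence $\|v-I_h^c v\|_{0,K}\le\|v-\Pi_h^\nabla v\|_{0,K}+\|\Pi_h^\nabla v-I_h^c v\|_{0,K}\lesssim h_K\|\bm\chi(v-\Pi_h^\nabla v)\|_{\ell^2}$ by \eqref{NormEquivL2} and the norm equivalence for $V_h^{1,c}(K)$. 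This is the missing ingredient encoding the ``quasi-$C^0$'' structure; once you insert it, your outline is complete.
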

	
	\begin{proof}
Step 1: We first prove \eqref{C0}. Consider the triangle inequality
		\begin{align}\label{triangle}
			|v-E_hv|_{2,h}
			\le |v- \Pi_h^\nabla v|_{2,h} + |\Pi_h^\nabla v - E_hv|_{2,h}.
		\end{align}
		Using the inverse inequality \eqref{inverse_inequality} and the norm equivalence \eqref{NormEquivH1}, we obtain
       \begin{align*}
           |v- \Pi_h^\nabla v|_{2,h}
       & = \sum_{K\in \mathcal{T}_h } |v- \Pi_h^\nabla v|_{2,K} \lesssim \sum_{K\in \mathcal{T}_h } h_K^{-1} |v- \Pi_h^\nabla v|_{1,K}\\
       & \lesssim \sum_{K\in \mathcal{T}_h } h_K^{-1}( S^K(v-\Pi_h^\nabla v, v-\Pi_h^\nabla v))^{1/2}.
       \end{align*}
Thus, it reduces to bound the second term on the right-hand side of \eqref{triangle}.
		Since $\Pi_h^\nabla v - E_hv\in V_k^{2,c}(K)$, by the inverse inequality and the norm equivalence in \cite{Huang2021YuMedius}, one gets
		\[|\Pi_h^\nabla v - E_hv|_{2,K} \lesssim h_K^{-2} \|\Pi_h^\nabla v - E_hv\|_{0,K} \lesssim h_K^{-1} \| \bm\chi^c(\Pi_h^\nabla v - E_hv) \|_{l^2}, \quad K \in \mathcal{T}_h.\]
		where $\bm\chi^c$ is the set of DoFs on $K$ for the $C^1$-continuous space.
		In the following, we only provide the details of bounding the values and its gradient values of the function $\Pi_h^\Delta v - E_hv$ at vertices since the argument implies the treatment for moments.
		
	Let $z\in \mathcal{V}_h^0$ be an interior vertex. Assume that $K_1 = K,K_2, \cdots ,K_L$ share the node $z$, and denote $(\Pi_h^{\nabla}\phi )_i =\Pi_h^{\nabla}\phi|_{K_i}$, where $K_i$  and  $K_{i+1}$ are two neighboring elements. We have
		\begin{align*}
			& (\Pi_h^{\nabla}v-E_hv)|_K(z) \\
			& = (\Pi_h^{\nabla}v )_1(z) - \frac{1}{L}( (\Pi_h^{\nabla}v )_1 +  \cdots  + (\Pi_h^{\nabla}v )_L)(z)  \\
			& = \frac{1}{L}(((\Pi_h^{\nabla}v)_1-(\Pi_h^{\nabla}v)_2) + ((\Pi_h^{\nabla}v )_1 - (\Pi_h^{\nabla}v )_3) +  \cdots  + ((\Pi_h^{\nabla}v )_1 - (\Pi_h^{\nabla}v )_L ))(z).
		\end{align*}
		Since $L$ is uniformly bounded and
		\[(\Pi_h^{\nabla}v )_1(z) - (\Pi_h^{\nabla}v )_j(z) = \sum\limits_{i = 1}^{j - 1} ((\Pi_h^{\nabla}v )_i - (\Pi_h^{\nabla}v )_{i + 1})(z),\]
		it suffices to consider the term $(\Pi_h^{\nabla}v )_1(z) - (\Pi_h^{\nabla}v )_2(z)$.
		Apply the inverse inequality of polynomials to get
		\begin{equation}\label{vzdiff}
				|(\Pi_h^{\nabla}v)_1(z) - (\Pi_h^{\nabla}v )_2(z)| \le \| (\Pi_h^{\nabla}v )_1 - (\Pi_h^{\nabla}v )_2 \|_{\infty ,e} \lesssim h_e^{ - 1/2}\| (\Pi_h^{\nabla}v )_1 - (\Pi_h^{\nabla}v )_2 \|_{0,e}.
		\end{equation}
		Noting that $v$ is not continuous across $e$, we denote by $v_i$ the restriction of $v$ on $K_i$. Then one has
		\begin{align*}
			\| (\Pi_h^{\nabla}v )_1 - (\Pi_h^{\nabla}v )_2 \|_{0,e}
			& = \| (\Pi_h^{\nabla}v )_1 - v_1 + v_1-v_2 + v_2 - (\Pi_h^{\nabla}v )_2 \|_{0,e} \\
			& \le \| (\Pi_h^{\nabla}v )_1 -v_1\|_{0,e}+\|v_2- (\Pi_h^{\nabla}v )_2 \|_{0,e}+\|[v]\|_{0,e}.
		\end{align*}
By the trace inequality \eqref{trace},
		\[
		\| v  - \Pi_h^{\nabla}v  \|_{0,e} \lesssim h_K^{1/2}| v  - \Pi_h^{\nabla}v |_{1,K} + h_K^{ - 1/2}\| v  - \Pi_h^{\nabla}v  \|_{0,K}.
		\]

		If $z$ is a vertex on the domain boundary, then $E_h(v)(z) = 0 = v(z)$, which gives
		\begin{align*}
			|(\Pi_h^{\nabla}v-E_hv)|_K(z) | = |\Pi_h^{\nabla}v(z) - v(z)| \le \|\Pi_h^{\nabla}v - v\|_{\infty, K}.
		\end{align*}
 Using the Sobolev inequality \eqref{Sobolev} and the inverse inequality \eqref{inverse_inequality}, we get
\begin{align*}
\|v - \Pi_h^{\nabla}v\|_{\infty, K}
& \le h_K|v - \Pi_h^{\nabla}v|_{2,K} + |v - \Pi_h^{\nabla}v|_{1,K} + h_K^{-1}\|v - \Pi_h^{\nabla}v\|_{0,K} \\
& \lesssim |v - \Pi_h^{\nabla}v|_{1,K} + h_K^{-1}\|v - \Pi_h^{\nabla}v\|_{0,K} .
\end{align*}

Therefore, for any vertex $z$ of $\mathcal{T}_h$, by using norm equivalence \eqref{NormEquivH1} and \eqref{NormEquivL2}, we obtain
\[\Big(h_K^{-1} (\Pi_h^{\nabla}v-E_hv)|_K(z)\Big)^2 \lesssim h_K^{-2}S^K(v-\Pi_h^\nabla v, v-\Pi_h^\nabla v)
		+ \sum_{e_i} \frac{1}{|e_i|^3}\|[v]\|_{0,e_i}^2 ,\]
where $e_i$ is the edge sharing by $K_i$ and $K_{i+1}$.

For the gradient value at an interior vertex $z\in \mathcal{V}^0$, one has
		\begin{align*}
			&h_z\nabla (\Pi_h^{\nabla}v-E_hv)|_K(z) \\
			=  &h_z(\nabla\Pi_h^{\nabla}v )_1(z) - \frac{h_z}{L}( (\nabla\Pi_h^{\nabla}v )_1 +  \cdots  + (\nabla\Pi_h^{\nabla}v )_L)(z)  \\
			=  &\frac{h_z}{L}(((\nabla\Pi_h^{\nabla}v)_1-(\nabla\Pi_h^{\nabla}v)_2) +   \cdots  + ((\nabla\Pi_h^{\nabla}v )_1 - (\nabla\Pi_h^{\nabla}v )_L ))(z)
		\end{align*}
		and we only need to consider the term $h_z((\nabla\Pi_h^{\nabla}v )_1(z) - (\nabla\Pi_h^{\nabla}v )_2(z))$. By the inverse inequality of polynomials,
		\begin{align*}
			h_z((\nabla\Pi_h^{\nabla}v )_1(z) - (\nabla\Pi_h^{\nabla}v )_2(z))
			\lesssim & h_zh_e^{-1/2}|(\Pi_h^{\nabla}v )_1 - (\Pi_h^{\nabla}v )_2 |_{1,e}\nonumber\\
			\lesssim &  h_e^{1/2}(| (\Pi_h^{\nabla}v )_1 -v_1|_{1,e}+|v_2- (\Pi_h^{\nabla}v )_2 |_{1,e}+|[v]|_{1,e}).
		\end{align*}
		Using the trace inequality \eqref{trace} and inverse inequality \eqref{inverse_inequality}, we get
		\[
		| v  - \Pi_h^{\nabla}v  |_{1,e} \lesssim h_K^{ - 1/2}| v  - \Pi_h^{\nabla}v  |_{1,K}.
		\]
		For the jump term $|[v]|_{1,e}$, noting that $[\partial_{\bm t_e} v]$ is a polynomial on $e$, we obtain from the inverse inequality for polynomials to get $\|[\partial_{\bm t_e} v]\|_e \lesssim h_e^{-1} \|[v]\|_e$ and hence
		\[|[v]|_{1,e} \lesssim \|[\partial_{\bm n_e} v]\|_{0,e} + h_e^{-1} \|[v]\|_{0,e}.\]
		
		For a boundary vertex $z$, we have $\nabla E_h(v)(z) =\nabla v(z) = 0$. Let $e$ be the associated boundary edge. By the inverse inequality for polynomials and the trace inequality,
		\begin{align*}
			h_z|\nabla(\Pi_h^{\nabla}v-E_hv)|_K(z) |
			& = h_z|\nabla\Pi_h^{\nabla}v(z)| \le h_K\|\nabla\Pi_h^{\nabla}v\|_{\infty, e}
			\lesssim h_e^{-1/2} \|\Pi_h^{\nabla}v\|_{0, e} \\
			& = h_e^{-1/2} \|\Pi_h^{\nabla}v - v\|_{0, e}
			\lesssim | v  - \Pi_h^{\nabla}v |_{1,K} + h_K^{ - 1}\| v  - \Pi_h^{\nabla}v  \|_{0,K}.
		\end{align*}
		
		Therefore, for any vertex $z$ of $\mathcal{T}_h$, by using norm equivalence \eqref{NormEquivH1} and \eqref{NormEquivL2}, we have
		\begin{align*}
			\Big(h_K^{-1}h_z|\nabla(\Pi_h^{\nabla}v-E_hv)|_K(z)\Big)^2
			 \lesssim \sum_{e_i}\sum_{K\in \mathcal{T}_{e_i}} h_K^{-2}S^K(v-\Pi_h^\nabla v, v-\Pi_h^\nabla v)+\sum_{e_i}
			\frac{1}{|e_i|}\|[ \partial_{\bm n_{e_i}} v]\|_{0,e_i}^2 .
		\end{align*}

The above arguments yield the first estimate \eqref{C0}.

Step 2: Let $e$ be an edge shared by two elements $K^-$ and $K^+$. By the trace inequality \eqref{trace} and the inverse inequality \eqref{inverse_inequality}, one gets
\[\|[v]\|_{0,e} = \|[v-I_h^cv]\|_{0,e}
\lesssim \sum_{K=K^-,K^+} h_K^{-1/2} \|v-I_h^cv\|_{0,K}.\]
Noting that $\bm \chi(v)=\bm \chi(I_h^cv)$, we apply the norm equivalence \eqref{NormEquivL2} and the norm equivalence for $H^1$-conforming virtual elements (cf. \cite{Huang2021YuMedius}) to get
       \begin{align*}
           \|v-I_h^cv\|_{0,K}\leq \|v-\Pi_h^{\nabla}v\|_{0,K}+\|\Pi_h^{\nabla}v-I_h^cv\|_{0,K}\lesssim h_K\|\bm \chi(v-\Pi_h^{\nabla}v)\|_{\ell^2},
       \end{align*}
which yields the second estimate \eqref{jump0}.

Step 3: The third estimate follows by considering the triangle inequality
\begin{align}
    \Big\|\Big[\frac{\partial v}{\partial \bm n_e}    \Big]  \Big\|
    \le   \Big\|\Big[\frac{\partial \Pi_h^{\nabla }v}{\partial \bm n_e}    \Big]  \Big\|
    + \Big\|\Big[\frac{\partial (v-\Pi_h^{\nabla }v)}{\partial \bm n_e}    \Big]  \Big\|,
\end{align}
with the details omitted.
	\end{proof}
	
	\begin{remark}
		Our result aligns with the traditional finite element methods on a triangular mesh since the first term on the right-hand side of \eqref{C0} vanishes when $v$ is replaced by a polynomial of degree $\le k$  and the remaining parts are the jumps accounting for the discontinuities (see \cite{Brenner-Wang-Zhao-2004,Brenner2005SungC0IP,Carsten2023IP} for instance). We remark that the second estimate \eqref{jump0} implies that the interior penalty virtual element can be treated as a continuous element in the a posteriori estimate. This property is referred to as the quasi-$C^0$ continuity in this article.
	\end{remark}

% \begin{remark}
% Ref.~\cite{ZJZZ24} considers the medius error analysis for interior penalty virtual element method of the biharmonic problem, where the elliptic projection is not included in the construction of $E_h$. This is because the local interior penalty virtual element space $V_k(K)$ is actually a subspace of the local $H^2$-conforming virtual element space.
%Fang Feng, Yuming Hu, Yue Yu, Jikun Zhao \end{remark}

	\section{A posteriori error estimation}\label{posteriori_analysis}
	
	For the a posteriori error estimation, without loss of generality, we simply assume homogeneous boundary value conditions.
	By reviewing the calculation in Section 5.1 of \cite{ChenHuangLin2022avem}, we can incorporate the computable components of the local error estimator $\eta_K$ as follows:
	\begin{align}\label{Local_Err_Estor}
		&\eta_{1,K} = \Big(\sum\limits_{e\in \mathcal{E}(K)\cap \mathcal{E}_h}  \eta_{1,e}^2\Big)^{1/2}, \qquad
		\eta_{1,e} =\frac{1}{|e|^{1/2}}\Big\| \Big[ \frac{\partial \Pi_h^\nabla u_h}{\partial \bm n_e} \Big]\Big\|_{0,e},
		\nonumber \\
		& \eta_{2,K} = \Big(\sum\limits_{e\in \mathcal{E}(K)\cap \mathcal{E}_h^0}  \eta_{2,e}^2\Big)^{1/2}, \qquad
		\eta_{2,e}=|e|^{1/2}\Big\| \Big[ \frac{\partial^2\Pi_h^\nabla u_h}{\partial \bm n_e^2} \Big]\Big\|_{0,e},
		\nonumber \\
		&\eta_{3,K} = \Big(\sum\limits_{e\in \mathcal{E}(K)\cap \mathcal{E}_h^0}  \eta_{3,e}^2\Big)^{1/2}, \qquad
		\eta_{3,e}=|e|^{3/2}\Big\| \Big[ \frac{\partial (\Delta \Pi_h^\nabla u_h)}{\partial \bm n_e} + \frac{\partial^3 ( \Pi_h^\nabla u_h)}{\partial \bm{n}_e\partial\bm t_e^2} \Big]\Big\|_{0,e},
		\nonumber \\
		& \eta_{4,K}= h_K^{-1} \big( S^K(u_h-\Pi_h^\nabla u_h,u_h-\Pi_h^\nabla u_h)\big)^{1/2},\nonumber\\
		&\eta_{5,K}:=h_K^2\|f - f_h\|_{0,K},
		\nonumber\\
		&\eta_{6,K}=h_K^2\|f_h-\Delta^2\Pi_h^\nabla u_h\|_{0,K},
	\end{align}
	where $f_h|_K = \Pi_{0,K}^k f$ and $\omega(K)$ denotes the union of all elements in $\mathcal{T}_h$ sharing some edges or points with $K$. The first term results from the penalty term which measures the extent to which $u_h$ fails to be in $H_0^2(\Omega)$. The remaining five terms successively indicate the residual of the normal moment jump and the jump in the effective shear force along interior edges, the virtual element consistency residual,  the element data oscillation and the element residual.
	
	The local and global error estimators are respectively defined by
	\[\eta_K(u_h) = \Big( \sum\limits_{i=1}^6 \eta_{i,K}^2\Big)^{1/2}, \quad \eta(u_h) = \eta(u_h, \mathcal{T}_h) = \Big( \sum\limits_{K\in \mathcal{T}_h} \eta_K^2(u_h) \Big)^{1/2}.\]
Note that we do not include $\eta_{0,K}$ in the error estimator, where $\eta_{0,K}$ is defined in \eqref{eta0}.
	
	\subsection{Upper bound}
	
	\begin{theorem}\label{upper bound}
		Let $u \in H_0^2(\Omega) \cap H^{k+1}(\Omega) $ with $k\ge 2$. Suppose that $u$ and $u_h$ are the solutions to \eqref{strongform} and \eqref{IPVEM}, respectively. Then it holds
		\[\|u - u_h\|_h \lesssim \eta(u_h).\]
	\end{theorem}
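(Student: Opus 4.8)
The plan is to follow the now-standard route for residual-type a posteriori error estimates of nonconforming (here interior-penalty) methods, combining the enriching operator $E_h$ from Section \ref{enriching} with a duality/Galerkin-orthogonality argument against the exact weak solution $u\in H_0^2(\Omega)$. Write $e_h:=u-u_h$ and split it using the conforming companion $v_h:=E_h u_h\in V_h^{2,c}\cap H_0^2(\Omega)$, namely
\[
\|u-u_h\|_h \le \|u-u_h\|_{2,h} + J_1(u_h,u_h)^{1/2} \le |u - v_h|_{2,\Omega} + |v_h - u_h|_{2,h} + J_1(u_h,u_h)^{1/2}.
\]
The middle term $|v_h-u_h|_{2,h}=|u_h-E_hu_h|_{2,h}$ is controlled directly by Lemma \ref{lem:enriching}: inequality \eqref{C0} together with \eqref{jump0} and \eqref{jump1} bounds it by $\big(\sum_K \eta_{4,K}^2\big)^{1/2} + \big(\sum_e \eta_{1,e}^2\big)^{1/2}$, which are among the terms of $\eta(u_h)$; this also absorbs $J_1(u_h,u_h)^{1/2}$ since $J_1$ is exactly $\sum_e \tfrac{\lambda_e}{|e|}\|[\partial_{\bm n_e}\Pi_h^\nabla u_h]\|_{0,e}^2 \lesssim \sum_e \eta_{1,e}^2$ (using $\lambda_e\eqsim 1$). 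So the crux is to estimate $|u-v_h|_{2,\Omega}$ where now $w:=u-v_h\in H_0^2(\Omega)$ is a genuine conforming test function.

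For the conforming error, I would use the coercivity of $a(\cdot,\cdot)$ on $H_0^2(\Omega)$ to get $|w|_{2,\Omega}^2 = a(u,w) - a(v_h,w) = (f,w) - a(v_h,w)$, and then insert the discrete solution: subtract and add $a_h$-related quantities and use the defining equation \eqref{IPVEM}. The standard manipulation is to write
\[
a(v_h,w) = \sum_{K}\big(a^K(v_h - \Pi_h^\Delta u_h, w) + a^K(\Pi_h^\Delta u_h, w)\big),
\]
replace $a^K(\Pi_h^\Delta u_h,w)$ by integrating by parts twice on each $K$ (producing the element residual $\Delta^2\Pi_h^\nabla u_h$, hence $\eta_{6,K}$, after also comparing $\Pi_h^\Delta u_h$ with $\Pi_h^\nabla u_h$ via Lemma \ref{lem:bound} and the inverse/norm-equivalence estimates, which cost $\eta_{4,K}$), plus boundary terms on edges. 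Summing the boundary contributions over $\mathcal{T}_h$ and using that $w\in H_0^2$ is single-valued with single-valued normal derivative, the identity \eqref{magic} collapses the edge sums into jump terms of the normal moment $\partial^2_{\bm n_e}\Pi_h^\nabla u_h$ and of the effective shear force $\partial_{\bm n_e}(\Delta\Pi_h^\nabla u_h)+\partial^3_{\bm n_e\bm t_e^2}\Pi_h^\nabla u_h$, paired against $w$ and $\partial_{\bm n_e} w$ respectively. Because $w$ and $\nabla w$ vanish on $\partial\Omega$ and $w$ is continuous, these pairings are handled by subtracting a conforming (e.g. Morley or $C^0$-continuous) interpolant of $w$ and invoking the trace inequality \eqref{trace} plus the approximation estimate \eqref{BHe1}; this yields the scaled jump estimators $\eta_{2,e}$ and $\eta_{3,e}$ times $|w|_{2,\Omega}$. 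The $(f,w)$ term is split as $(f-f_h,w) + \langle f_h,w\rangle$, and since $\langle f_h,\cdot\rangle$ is the discrete right-hand side one compares it with $F_h(\cdot)$ and the consistency relation, producing $\eta_{5,K}$ (data oscillation) via \eqref{right} and the Poincaré-type scaling $\|w\|_{0,K}\lesssim h_K^2|w|_{2,K}$ valid for $w\in H_0^2$. Collecting all pieces and dividing by $|w|_{2,\Omega}$ gives $|u-v_h|_{2,\Omega}\lesssim \eta(u_h)$, completing the bound.

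The main obstacle I expect is the bookkeeping of the consistency/projection errors: throughout, the computable estimators involve $\Pi_h^\nabla u_h$ (to keep everything computable, as emphasized in the introduction), whereas the natural integration-by-parts identities produce $\Pi_h^\Delta u_h$ and $u_h$ itself. Carefully converting between $u_h$, $\Pi_h^\Delta u_h$, and $\Pi_h^\nabla u_h$ — bounding each discrepancy by the stabilization term $h_K^{-1}S^K(u_h-\Pi_h^\nabla u_h,\cdot)^{1/2}=\eta_{4,K}$ using Lemma \ref{lem:bound}, the inverse inequality \eqref{inverse_inequality}, and the norm equivalences \eqref{Norm_Equiv_Qk_H2K}–\eqref{L2H1} — is the technically delicate part, and one must be sure no term involving the jump $[u_h]$ of the function itself survives (consistent with Remark \ref{rem:C0issue}); this is exactly where the quasi-$C^0$ property, i.e. \eqref{jump0}, is used to discard $\eta_{0,K}$. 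A secondary subtlety is the treatment of boundary edges, where the definitions $[v]=v$, $\{v\}=v$ and the homogeneous boundary data of $w$ must be used so that the boundary-edge terms either vanish or are absorbed into the same estimators without extra contributions.
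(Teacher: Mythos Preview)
Your overall route---split via the enriching operator and then estimate $|u-E_hu_h|_2$ by a residual argument---matches the paper's, but the concrete mechanism you describe has a genuine gap that would make the proof fail as written.

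The issue is how you obtain \emph{local} smallness of the test function. You claim a ``Poincar\'e-type scaling $\|w\|_{0,K}\lesssim h_K^2|w|_{2,K}$ valid for $w\in H_0^2$''; this is false on an interior element (there $w|_K$ is an arbitrary $H^2$ function with no mean-zero constraint), and the analogous edge bound $\|\partial_{\bm n}w\|_{0,e}\lesssim h_e^{1/2}|w|_{2,\omega(e)}$ is equally unavailable. Consequently your bounds for the element residual $\eta_{6,K}$, the oscillation $\eta_{5,K}$, and the jump estimators $\eta_{2,e},\eta_{3,e}$ cannot be closed against $|w|_2$. You try to rescue the edge terms by ``subtracting a conforming interpolant of $w$'', but you never explain what happens to the added-back interpolant part---and it does not vanish.

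The paper fixes exactly this by testing the \emph{discrete} equation with $I_h\phi\in V_h$ (the IPVEM interpolant of $\phi=u-E_hu_h$) rather than with $\phi$ itself. This produces two effects you are missing. First, every volume and edge pairing appears against $\phi-I_h\phi$, which does satisfy local estimates $\|\phi-I_h\phi\|_{0,K}\lesssim h_K^2|\phi|_{2,K}$ and $\|\partial_{\bm n}(\phi-I_h\phi)\|_{0,e}\lesssim h_e^{1/2}|\phi|_{2,\omega(e)}$ via Lemma~\ref{lem:interpIPVEM} and the trace inequality; this is what makes $\eta_{2,K},\eta_{3,K},\eta_{5,K},\eta_{6,K}$ work. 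Second, the discrete equation $\mathcal{A}_h(u_h,I_h\phi)=F_h(I_h\phi)$ injects the penalty terms $J_1(u_h,I_h\phi),J_2(u_h,I_h\phi),J_3(u_h,I_h\phi)$ into the identity; in particular $J_2(u_h,I_h\phi)$ \emph{cancels} one of the edge integrals arising from integration by parts (the $\{\partial^2_{\bm n}\Pi_h^\nabla u_h\}[\partial_{\bm n}(\phi-I_h\phi)]$ term), so only the computable jump $[\partial^2_{\bm n}\Pi_h^\nabla u_h]$ survives. Your outline mentions ``using the defining equation~\eqref{IPVEM}'' but never actually tests it with a $V_h$ function, so neither the local scalings nor this cancellation are available. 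The remedy is to insert $I_h\phi$ as the paper does; after that, your conversion between $u_h$, $\Pi_h^\Delta u_h$, and $\Pi_h^\nabla u_h$ via $\eta_{4,K}$ is indeed the right bookkeeping.
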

	\begin{proof}
		Let $e_h = u -u_h$ and $E_h$ be the enriching operator. Using the triangle inequality and \eqref{C0}, one has
		\begin{align}
			\|e_h\|_h
			& \le |u - E_h u_h|_2 + |E_h u_h - u_h|_{2,h} + J_1(e_h,e_h)^{1/2} \label{uppereh}\\
			& \lesssim |u - E_h u_h|_2 + \Big(\sum_{K\in \mathcal{T}_h } S^K(u_h-\Pi_h^\nabla u_h, u_h-\Pi_h^\nabla u_h) \nonumber\\
			& \qquad + \sum\limits_{e\in \mathcal{E}_h} \frac{1}{|e|} \Big\| \Big[\frac{\partial \Pi_h^\nabla u_h}{\partial \bb{n}_e}\Big]\Big\|_{0,e}^2 \Big)^{1/2}  + J_1(e_h,e_h)^{1/2} \nonumber \\
			& \lesssim |u - E_h u_h|_2 +\Big(\sum_{K\in \mathcal{T}_h}( \eta_{1,K}^2 + \eta_{4,K}^2)\Big)^{1/2}. \nonumber
		\end{align}
		Therefore, we only need to consider the first term $|u - E_h u_h|_2$ in the last equation.
		
		Let $\phi = u - E_h u_h$. According to the definitions of the continuous and discrete variational problems, one gets
		\begin{align*}
			& |u - E_h u_h|_2^2 =  a(u-E_hu_h,\phi) \\
			& =\sum_{K\in\mathcal{T}_h} \Big( a^K(u_h-E_hu_h, \phi) - a^K(u_h,\phi-I_h\phi)\Big)
			+ a(u,\phi) -\sum_{K\in \mathcal{T}_h}a^K(u_h, I_h\phi) \\
			&  = \sum_{K\in\mathcal{T}_h} \Big( a^K(u_h-E_hu_h, \phi) - a^K(u_h,\phi-I_h\phi)\Big) + (f,\phi) - (f_h, I_h \phi)  \\
			& \qquad  + a_h(u_h,I_h \phi)  -\sum_{K\in \mathcal{T}_h}a^K(u_h, I_h\phi)
			+J_1(u_h,I_h\phi)+J_2(u_h,I_h\phi)+J_3(u_h,I_h\phi).
		\end{align*}
		The definition of $\Pi_h^{\Delta}$ gives
		\begin{align*}
			&a_h(u_h,I_h\phi)-\sum_{K\in \mathcal{T}_h}a^K(u_h,I_h\phi)\notag\\
			=&\sum_{K\in \mathcal{T}_h} a^K(\Pi_h^{\Delta}u_h,\Pi_h^{\Delta}I_h\phi)+\sum_{K\in \mathcal{T}_h} S^K(u_h-\Pi_h^{\Delta}u_h,I_h\phi-\Pi_h^{\Delta}I_h\phi)-\sum_{K\in \mathcal{T}_h}a^K(u_h,I_h\phi)\notag\\
			=&\sum_{K\in \mathcal{T}_h} a^K(\Pi_h^{\Delta}u_h-u_h,I_h\phi)+\sum_{K\in \mathcal{T}_h} S^K(u_h-\Pi_h^{\Delta}u_h,I_h\phi-\Pi_h^{\Delta}I_h\phi)
		\end{align*}
		Therefore, we can decompose $|u - E_h u_h|_{2,h}^2$ as
		\begin{align}\label{sum}
			a(u-E_hu_h,\phi)=I_1 + I_2 +I_3 + I_4 +J_1(u_h,I_h\phi)+J_2(u_h,I_h\phi)+J_3(u_h,I_h\phi),
		\end{align}
		where
		\begin{align}
			&I_1= \sum_{K\in\mathcal{T}_h} a^K(u_h-E_hu_h, \phi),\nonumber \\
			&I_2=(f_h,\phi-I_h\phi) -\sum_{K\in\mathcal{T}_h}a^K(\Pi_h^{\Delta}u_h,\phi-I_h\phi) \label{upperI2} \\
			&I_3= -\sum_{K\in \mathcal{T}_h} a^K(u_h-\Pi_h^{\Delta}u_h,\phi)+\sum_{K\in \mathcal{T}_h}  \nonumber S^K(u_h-\Pi_h^{\Delta}u_h,I_h\phi-\Pi_h^{\Delta}I_h\phi) \nonumber\\
			&I_4= (f- f_h,\phi). \nonumber
		\end{align}
		
		For the term $I_1$, using the Cauchy-Schwarz inequality and the estimate in Lemma \ref{lem:enriching}, one gets
		\begin{align*}
			I_1
			&  \le \sum_{K\in \mathcal{T}_h} |u_h-E_hu_h|_{2,K}|\phi|_{2,K}
			\le \Big(\sum_{K\in \mathcal{T}_h} |u_h-E_hu_h|_{2,K}^2\Big)^{1/2}|\phi|_2 \\
			&
			\lesssim \Big(\sum_{K\in \mathcal{T}_h}( \eta_{1,K}^2 + \eta_{4,K}^2) \Big)^{1/2} |\phi|_2.
		\end{align*}

		For the term $I_2$, using the integration by parts we can split it into several terms as
\begin{align*}
I_2
& = \sum_{K\in\mathcal{T}_h}a^K(\Pi_h^{\nabla}u_h - \Pi_h^{\Delta}u_h,\phi-I_h\phi) + (f_h,\phi-I_h\phi) -\sum_{K\in\mathcal{T}_h}a^K(\Pi_h^{\nabla}u_h,\phi-I_h\phi) \\
& = \sum_{K\in\mathcal{T}_h}a^K(\Pi_h^{\nabla}u_h - \Pi_h^{\Delta}u_h,\phi-I_h\phi) \\
& \qquad + \sum_{K\in \mathcal{T}_h}  (f_h - \Delta^2\Pi_h^{\nabla}u_h,\phi-I_h\phi)_K \\
			& \qquad + \sum_{K\in \mathcal{T}_h} \Big(\frac{\partial (\Delta \Pi_h^{\nabla}u_h)}{\partial \bm n}+\frac{\partial^3 \Pi_h^{\nabla}u_h}{\partial \bm n \partial \bm t^2}, \phi-I_h\phi\Big)_{\partial K} \\
			& \qquad - \sum_{K\in \mathcal{T}_h} \Big(\frac{\partial^2\Pi_h^{\nabla}u_h}{\partial \bm n^2}, \frac{\partial(\phi-I_h\phi)}{\partial \bm n}\Big)_{\partial K} \\
			& \qquad + \sum_{K\in \mathcal{T}_h} \sum\limits_{z \in \mathcal{V}(K)} \Big[\frac{\partial^2\Pi_h^{\nabla}u_h }{\partial \bm t \partial\bm{n}} \Big](z)(\phi-I_h\phi)(z) \\
			& =: I_{2,0} + I_{2,1} + I_{2,2} + I_{2,3} + I_{2,4}.
		\end{align*}
For $I_{2,0}$, we obtain from the boundedness of $\Pi_h^\Delta$ in Lemma \ref{lem:bound}, the interpolation error estimate \eqref{interp}, the inverse inequality \eqref{inverse_inequality} and the norm equivalence \eqref{NormEquivH1} that
\begin{align*}
I_{2,0}
& = \sum_{K\in\mathcal{T}_h}a^K(\Pi_h^{\nabla}u_h - \Pi_h^{\Delta}u_h,\phi-I_h\phi)
  \le \sum_{K\in\mathcal{T}_h}|\Pi_h^{\nabla}u_h - \Pi_h^{\Delta}u_h|_{2,K} |\phi-I_h\phi|_{2,K} \\
& \lesssim \sum_{K\in\mathcal{T}_h}|u_h - \Pi_h^{\nabla}u_h|_{2,K} |\phi|_{2,K}
\lesssim \sum_{K\in\mathcal{T}_h} h_K^{-1} |u_h - \Pi_h^{\nabla}u_h|_{1,K} |\phi|_{2,K} \\
& \lesssim \Big( \sum_{K\in \mathcal{T}_h} \eta_{4,K}^2 \Big)^{1/2} |\phi|_2.
\end{align*}
For $I_{2,1}$, by the interpolation error estimate \eqref{interp},
		\begin{align*}
			I_{2,1}
			& = \sum_{K\in \mathcal{T}_h}  (f_h - \Delta^2\Pi_h^{\nabla}u_h,\phi-I_h\phi)_K
			\le \sum_{K\in \mathcal{T}_h}  \|f_h - \Delta^2\Pi_h^{\nabla}u_h\|_{0,K} \|\phi-I_h\phi\|_{0,K}\\
			& \lesssim \sum_{K\in \mathcal{T}_h}  h_K^2  \|f_h - \Delta^2\Pi_h^{\nabla}u_h\|_{0,K} |\phi|_{2,K}
			\le \Big( \sum_{K\in \mathcal{T}_h} \eta_{6,K}^2 \Big)^{1/2} |\phi|_2.
		\end{align*}
		For the sake of brevity, we set $Q_3(u_h) = \frac{\partial (\Delta \Pi_h^{\nabla}u_h)}{\partial \bm n}+\frac{\partial^3 \Pi_h^{\nabla}u_h}{\partial \bm n \partial \bm t^2}$ in $I_{2,2}$. From Eq.~\eqref{magic}  one gets
		\begin{align*}
			I_{2,2} =& \sum_{K\in \mathcal{T}_h} (Q_3(u_h), \phi-I_h\phi)_{\partial K}\\
			= &\sum\limits_{e \in \mathcal{E}_h^0}  ([Q_3(u_h)], \{\phi-I_h\phi\})_e + \sum\limits_{e \in \mathcal{E}_h}  (\{Q_3(u_h)\}, [\phi-I_h\phi])_e.
		\end{align*}
		% The jump terms for $\phi$ can be omitted as $\phi = u - E_h u_h$ is $C^1$-continuous and belongs to $H_0^2(\Omega)$.
		On the other hand, noting that $I_h\phi|_e\in \mathbb{P}_{k+2}(e)$ and $Q_3(u_h)|_e \in \mathbb{P}_{k-3}(e)$, one has $(\{Q_3(u_h)\}, [I_h\phi])_e = 0$, since $I_h\phi$ is continuous at the Gauss-Lobatto points on $e$ and the quadrature formula is exact for polynomials of up to degree $2k-1$. Combing with $\phi\in H_0^2(\Omega)$, we derive
		\begin{equation*}
			\sum\limits_{e \in \mathcal{E}_h}  (\{Q_3(u_h)\}, [\phi-I_h\phi])_e=0.
		\end{equation*}
		The continuity of $I_h\phi$ at the Gauss-Lobatto points on $e$  implies that
		\[
		(\phi - I_h \phi)(z) = 0,\quad z\in \mathcal{V}_h,
		\]
		which indicates
		\[I_{2,4} = \sum_{K\in \mathcal{T}_h} \sum\limits_{z \in \mathcal{V}(K)} \Big[\frac{\partial^2\Pi_h^{\nabla}u_h }{\partial \bm t \partial\bm{n}} \Big](z)(\phi-I_h\phi)(z)= 0.\]
		Hence, it follows from the Cauchy-Schwarz inequality, the trace inequality \eqref{trace} and the interpolation error estimates \eqref{interp} that
		\begin{align*}
			I_{2,2}
			& = \sum\limits_{e \in \mathcal{E}_h^0}  ([Q_3(u_h)], \{\phi-I_h\phi\})_e
			\le \sum\limits_{e \in \mathcal{E}_h^0}\|[Q_3(u_h)]\|_{0,e} \|\{\phi-I_h\phi\}\|_{0,e}  \\
			& \lesssim \sum\limits_{e \in \mathcal{E}_h^0}\|[Q_3(u_h)]\|_{0,e} ( h_e^{-1/2}  \|\{\phi-I_h\phi\}\|_{0,\omega(e)} + h_e^{1/2} |\{\phi-I_h\phi\} |_{1,\omega(e)} ) \\
			& \lesssim \sum\limits_{e \in \mathcal{E}_h^0} h_e^{3/2} \|[Q_3(u_h)]\|_{0,e}  |\phi|_{2,\omega(e)}
			\lesssim \Big( \sum_{e\in \mathcal{E}_h^0} \eta_{3,e}^2 \Big)^{1/2} |\phi|_2
			\lesssim \Big( \sum_{K\in \mathcal{T}_h} \eta_{3,K}^2 \Big)^{1/2} |\phi|_2.
		\end{align*}
		For $I_{2,3}$, noting that $[\frac{\partial \phi}{\partial \bm n}]_e = 0$, from \eqref{magic} and the definition of $V_k(K)$, we have
		\begin{align}
			I_{2,3}
			& = - \sum_{K\in \mathcal{T}_h} \Big(\frac{\partial^2\Pi_h^{\nabla}u_h}{\partial \bm n^2}, \frac{\partial(\phi-I_h\phi)}{\partial \bm n}\Big)_{\partial K} \nonumber \\
			& = - \sum_{e\in \mathcal{E}_h^0} \Big( ([\frac{\partial^2\Pi_h^{\nabla}u_h}{\partial \bm n^2}], \{\frac{\partial(\phi-I_h\phi)}{\partial \bm n}\})_e
			- \sum_{e\in \mathcal{E}_h} (\{\frac{\partial^2\Pi_h^{\nabla}u_h}{\partial \bm n^2}\}, [\frac{\partial(\phi-I_h\phi)}{\partial \bm n}])_e \Big)\nonumber \\
			& = - \sum_{e\in \mathcal{E}_h^0} ([\frac{\partial^2\Pi_h^{\nabla}u_h}{\partial \bm n^2}], \{\frac{\partial(\phi-I_h\phi)}{\partial \bm n}\})_e
			- J_2(u_h, I_h \phi), \label{I23}
		\end{align}
which gives
\[I_{2,3} + J_2(u_h, I_h \phi) = - \sum_{e\in \mathcal{E}_h^0} ([\frac{\partial^2\Pi_h^{\nabla}u_h}{\partial \bm n^2}], \{\frac{\partial(\phi-I_h\phi)}{\partial \bm n}\})_e.\]
		Similarly, by using the Cauchy-Schwarz inequality, the trace inequality \eqref{trace} and the interpolation error estimates \eqref{interp}, we can get
		\[I_{2,3} + J_2(u_h, I_h \phi)\lesssim \Big( \sum_{K\in \mathcal{T}_h} \eta_{2,K}^2 \Big)^{1/2} |\phi|_2 . \]
		Collecting the above estimates to derive
		\[I_2 + J_2(u_h, I_h \phi) \lesssim \Big( \sum_{K\in \mathcal{T}_h} (\eta_{2,K}^2 + \eta_{3,K}^2 + \eta_{4,K}^2 + \eta_{6,K}^2) \Big)^{1/2} |\phi|_2.\]
		
		For the term $I_3$, by the norm equivalence \eqref{Norm_Equiv_Qk_H2K} and \eqref{NormEquivH1}, the boundedness of $\Pi_h^{\Delta}$ in Lemma \ref{lem:bound}, the inverse inequality \eqref{inverse_inequality}, we obtain
		\begin{align*}
			I_3
			& = -\sum_{K\in \mathcal{T}_h} a^K(u_h-\Pi_h^{\Delta}u_h,\phi)+\sum_{K\in \mathcal{T}_h} S^K(u_h-\Pi_h^{\Delta}u_h,I_h\phi-\Pi_h^{\Delta}I_h\phi) \\
			& \lesssim \sum_{K\in\mathcal{T}_h}\Big(
			h_K^{-2} \| {\boldsymbol{\chi}}(u_h-\Pi_h^{\Delta}u_h) \|_{\ell^2}
			\| {\boldsymbol{\chi}}({I_h}\phi-\Pi_h^{\Delta}({I_h}\phi)) \|_{\ell^2}
			+ | u_h-\Pi_h^{\Delta}u_h |_{2,K} | \phi |_{2,K} \Big)  \\
			& \lesssim \sum_{K\in\mathcal{T}_h}\Big(
			| u_h-\Pi_h^{\Delta}u_h |_{2,K} | {I_h}\phi-\Pi_h^{\Delta}({I_h}\phi) |_{2,K}
			+ | u_h-\Pi_h^{\Delta}u_h |_{2,K} | \phi|_{2,K} \Big)  \\
			& \lesssim \Big(\sum_{K\in\mathcal{T}_h}
			| u_h-\Pi_h^{\Delta}u_h |_{2,K}^2\Big)^{1/2}
			\Big( \sum_{K\in\mathcal{T}_h}
			\big( |\phi|_{2,K}^2 + | \phi-{I_h}\phi |_{2,K}^2 \big)\Big)^{1/2}.  \\
            & \lesssim \Big(\sum_{K\in\mathcal{T}_h}
			| u_h-\Pi_h^{\nabla}u_h |_{2,K}^2+|\Pi_h^{\nabla}u_h-\Pi_h^{\Delta}u_h|_{2,K}\Big)^{1/2}
			|\phi |_2.  \\
             & \lesssim \Big(\sum_{K\in\mathcal{T}_h}
			h_K^{-2}| u_h-\Pi_h^{\nabla}u_h |_{1,K}^2\Big)^{1/2}
			|\phi |_2
			% & \lesssim \Big(\sum_{K\in\mathcal{T}_h}
			% S^K(u_h-\Pi_h^{\Delta}u_h, \, u_h-\Pi_h^{\Delta}u_h) \Big)^{1/2} | \phi |_2
			\lesssim \Big(\sum_{K\in \mathcal{T}_h} \eta_{4,K}^2(u_h) \Big)^{1/2}|\phi|_2.
		\end{align*}

		For $I_4$, it is easy to get
		\[I_4= (f- f_h,\phi) \lesssim \Big(\sum_{K\in \mathcal{T}_h} \eta_{5,K}^2(u_h) \Big)^{1/2}|\phi|_2.\]
		
		For $J_1(u_h,I_h\phi)$, by the trace inequality \eqref{trace}, the boundedness of $\Pi_h^{\nabla}$ and  interpolation error estimate \eqref{interp}, we have
		\begin{align*}
		J_1(u_h,I_h\phi)
		&\lesssim(\sum_{e\in\mathcal{E}_h} \eta_{1,e}^2)^{1/2}\Big(\sum_{e\in\mathcal{E}_h}|e|^{-1}\Big\|\Big[ \frac{\partial(\Pi_h^{\nabla}I_h\phi-\phi)}{\partial \bm n_e}\Big]\Big\|_{0,e}^2\Big)^{1/2}
		\lesssim \Big(\sum_{K\in \mathcal{T}_h}\eta_{1,K}^2\Big)^{1/2}|\phi|_2.
		\end{align*}
		For $J_3(u_h,I_h\phi)$, by the Cauchy-Schwarz inequality, trace inequality \eqref{trace}, the inverse inequalities of polynomials and  the boundedness of $\Pi_h^{\nabla}$, ones give
		\begin{align}
			J_3(u_h,I_h\phi)
			& =	\Big|\sum_{e\in \mathcal{E}_h}\int_e\Big\{ \frac{\partial^2\Pi_h^{\nabla}I_h\phi}{\partial \bm n_e^2} \Big\}\Big[\frac{\partial \Pi_h^{\nabla} u_h}{\partial \bm n_e} \Big]\d s\Big|\lesssim (\sum_{e\in\mathcal{E}_h} \eta_{1,e}^2)^{1/2}\Big(\sum_{K\in \mathcal{T}_h}|\Pi_h^{\nabla}I_h\phi|_{2,K}^2\Big)^{1/2} \nonumber \\
			&\lesssim \Big(\sum_{K\in \mathcal{T}_h}\eta_{1,K}^2\Big)^{1/2}|\phi|_2. \label{upperJ3}
		\end{align}
		
		The combination of the above estimates yields
		\[|u - E_h u_h|_2^2 =  a(u-E_hu_h,\phi) \lesssim \eta(u_h) |\phi|_2 = \eta(u_h) |u - E_h u_h|_2, \]
		as required.
	\end{proof}

	\subsection{Lower bound}

	By means of the technique of bubble functions frequently used in the a posteriori error analysis \cite{Huang2021YuMedius,Verfurth2013}, we
	can also derive the following lower bound of the a posteriori error estimator.  The proof  is similar to \cite{Huang2021YuMedius}, but for the sake of completeness, we  still provide a detailed proof.
	
	\begin{theorem}\label{posteriori}
		Let $u\in V$ be the solution of \eqref{origion}. Then, for any  $w_h\in V_h$, $K\in \mathcal{T}_h$ and  $e\in \mathcal{E}(K)\cap \mathcal{E}_h^0$, there hold
		\begin{align}
			h^4_K\| f_h-\Delta^2 \Pi_h^{\nabla}w_h \|_{0,K}^2
			&\lesssim | u-\Pi_h^{\nabla}w_h |_{2,K}^2 + \eta_{5,K}^2,
			\label{Local_Eff_Est1}  \\
			h_e\Big\|\Big[\frac{\partial^2 \Pi_h^{\nabla}w_h}{\partial\bm n_e^2}\Big]\Big\|_{0,e}^2
			&\lesssim | u-\Pi_h^{\nabla}w_h |_{2,\omega(e)}^2
			+ \sum_{K^{'}\in\omega(e)}\eta_{5,K^{'}}^2,
			\label{Local_Eff_Est2} \\
			h_e^3\Big\| \Big[ \frac{\partial \Pi_h^{\nabla}w_h}{\partial \bm n_e} + \frac{\partial^3  \Pi_h^{\nabla}w_h}{\partial \bm{n}_e\partial\bm t_e^2} \Big]\Big\|_{0,e}^2
			&\lesssim | u-\Pi_h^{\nabla}w_h |_{2,\omega(e)}^2
			+ \sum_{K^{'}\in\omega(e)}\eta_{5,K^{'}}^2.
			\label{Local_Eff_Est3}
		\end{align}
	\end{theorem}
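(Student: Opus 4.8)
The plan is to run the classical Verfürth-type bubble function argument, just as in \cite{Huang2021YuMedius,Verfurth2013}, taking advantage of the fact that every quantity appearing in the three estimators is a polynomial of fixed degree: since $\Pi_h^\nabla w_h|_K\in\mathbb{P}_k(K)$, we have $f_h-\Delta^2\Pi_h^\nabla w_h\in\mathbb{P}_k(K)$, $[\partial_{\bm n_e}^2\Pi_h^\nabla w_h]\in\mathbb{P}_{k-2}(e)$, and $[Q_3(w_h)]\in\mathbb{P}_{k-3}(e)$, where $Q_3(v):=\partial_{\bm n}(\Delta\Pi_h^\nabla v)+\partial^3_{\bm n\bm t^2}\Pi_h^\nabla v$. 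The three ingredients are: (i) an interior element bubble $b_K\in H_0^2(K)$ and edge bubbles $b_e$ supported on the patch $\omega(e)$, constructed over the virtual triangulation $\mathcal{T}_K$ provided by assumption \textbf{H}, together with the usual norm equivalences $\|p\|_{0,K}^2\lesssim(b_Kp,p)_K$, $\|q\|_{0,e}^2\lesssim(b_eq,q)_e$ on the relevant polynomial spaces and the inverse-type scaling bounds for $b_Kp$ and for the $H_0^2$-extensions of edge polynomials; (ii) the biharmonic integration-by-parts identity already used in the proof of Theorem~\ref{upper bound},
\[
a^K(v,\phi)=(\Delta^2v,\phi)_K+\big(Q_3(v),\phi\big)_{\partial K}-\Big(\frac{\partial^2v}{\partial\bm n^2},\frac{\partial\phi}{\partial\bm n}\Big)_{\partial K}+\sum_{z\in\mathcal{V}(K)}\Big[\frac{\partial^2v}{\partial\bm t\partial\bm n}\Big](z)\,\phi(z);
\]
and (iii) the Galerkin identity $a(u,\phi)=(f,\phi)$ for $\phi\in H_0^2(\Omega)$.

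For \eqref{Local_Eff_Est1} I would set $r_K=f_h-\Delta^2\Pi_h^\nabla w_h$ and test with $\phi=b_Kr_K\in H_0^2(K)\subset H_0^2(\Omega)$. Since $\phi$ and $\nabla\phi$ vanish on $\partial K$, integration by parts gives $(\Delta^2\Pi_h^\nabla w_h,\phi)_K=(\nabla^2\Pi_h^\nabla w_h,\nabla^2\phi)_K$, while $(f,\phi)=a(u,\phi)=(\nabla^2u,\nabla^2\phi)_K$; hence $\|r_K\|_{0,K}^2\lesssim(r_K,\phi)_K=(f_h-f,\phi)_K+(\nabla^2(u-\Pi_h^\nabla w_h),\nabla^2\phi)_K$. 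Estimating the right-hand side by Cauchy--Schwarz and the bubble bounds $\|\phi\|_{0,K}\lesssim\|r_K\|_{0,K}$, $|\phi|_{2,K}\lesssim h_K^{-2}\|r_K\|_{0,K}$ and rearranging yields $h_K^2\|r_K\|_{0,K}\lesssim|u-\Pi_h^\nabla w_h|_{2,K}+h_K^2\|f-f_h\|_{0,K}$, which is \eqref{Local_Eff_Est1}.

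For \eqref{Local_Eff_Est2} and \eqref{Local_Eff_Est3} I would choose, for the edge $e$, a function $\phi_e\in H_0^2(\Omega)$ supported in $\omega(e)$ and tailored so that, when $a(u-\Pi_h^\nabla w_h,\phi_e)=(f,\phi_e)-\sum_{K\in\omega(e)}a^K(\Pi_h^\nabla w_h,\phi_e)$ is expanded with the identity above, exactly one boundary term on $e$ survives. Concretely, for \eqref{Local_Eff_Est2} take $\phi_e$ with $\phi_e=0$ on all edges and at all vertices of $\omega(e)$ and $\partial_{\bm n_e}\phi_e=b_e[\partial_{\bm n_e}^2\Pi_h^\nabla w_h]$ on $e$ (vanishing normal derivative on the other edges), so only the $(\partial^2_{\bm n}\Pi_h^\nabla w_h,\partial_{\bm n}\phi_e)_{\partial K}$ contributions remain and collapse to $\int_eb_e[\partial^2_{\bm n_e}\Pi_h^\nabla w_h]^2\,\d s\gtrsim\|[\partial^2_{\bm n_e}\Pi_h^\nabla w_h]\|_{0,e}^2$; for \eqref{Local_Eff_Est3} take $\phi_e$ with vanishing trace and normal derivative on every edge and every vertex except $\phi_e=b_e[Q_3(w_h)]$ on $e$, so only the $(Q_3,\phi_e)_{\partial K}$ contributions remain. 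In both cases I would move $a(u-\Pi_h^\nabla w_h,\phi_e)$, $(f-f_h,\phi_e)$ and $(f_h-\Delta^2\Pi_h^\nabla w_h,\phi_e)_{\omega(e)}$ to the other side, apply Cauchy--Schwarz with the scaling bounds for $\phi_e$ (for \eqref{Local_Eff_Est2}: $|\phi_e|_{2,\omega(e)}\lesssim h_e^{-1/2}\|J_e\|_{0,e}$, $\|\phi_e\|_{0,\omega(e)}\lesssim h_e^{3/2}\|J_e\|_{0,e}$; for \eqref{Local_Eff_Est3}: $|\phi_e|_{2,\omega(e)}\lesssim h_e^{-3/2}\|J_e\|_{0,e}$, $\|\phi_e\|_{0,\omega(e)}\lesssim h_e^{1/2}\|J_e\|_{0,e}$, with $J_e$ the corresponding jump), and finally invoke the already-proved \eqref{Local_Eff_Est1} to absorb the $h_{K'}^2\|f_h-\Delta^2\Pi_h^\nabla w_h\|_{0,K'}$ terms into $|u-\Pi_h^\nabla w_h|_{2,K'}^2+\eta_{5,K'}^2$.

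The main obstacle I anticipate is the careful accounting of the boundary terms in the biharmonic integration by parts: guaranteeing that the corner contributions $\sum_z[\partial_{\bm t}\partial_{\bm n}\Pi_h^\nabla w_h](z)\phi_e(z)$ and the unwanted edge terms on the boundary edges of $\omega(e)$ other than $e$ genuinely vanish forces $\phi_e$ to have prescribed trace, normal derivative and vertex values simultaneously, which rests on an $H_0^2$-stable polynomial extension operator on the patch; this, and the construction of bubble functions with the stated norm-equivalence and inverse-scaling properties on general polygonal elements, is delivered by the virtual triangulation $\mathcal{T}_K$ of assumption \textbf{H} exactly as in \cite{Huang2021YuMedius}. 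The remaining manipulations are routine.
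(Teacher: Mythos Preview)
Your proposal is correct and follows the same Verf\"urth bubble-function strategy as the paper, carried out on the auxiliary triangles supplied by assumption~\textbf{H}. The arguments for \eqref{Local_Eff_Est1} and \eqref{Local_Eff_Est2} coincide with the paper's. The only technical difference is in \eqref{Local_Eff_Est3}: you ask for a bubble $\phi_e$ with $\phi_e|_e=b_e[Q_3(w_h)]$ \emph{and} $\partial_{\bm n}\phi_e|_e=0$, so that only the $Q_3$ edge term survives after integration by parts. The paper instead takes the simpler bubble $\phi=b_e\,E_T([Q_3])$ supported on the two triangles $T=\tau^+\cup\tau^-$ sharing $e$, whose normal derivative on $e$ does \emph{not} vanish; the surviving cross-term $\big([\partial_{\bm n_e}^2\Pi_h^\nabla w_h],\partial_{\bm n}\phi\big)_e$ is then estimated via $\|\partial_{\bm n}\phi\|_{0,e}\lesssim h_e^{-3/2}\|\phi\|_{0,T}$ and absorbed by invoking the already-proved \eqref{Local_Eff_Est2}. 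Your route trades a more constrained bubble construction for a cleaner accounting of boundary terms; the paper's route uses a standard bubble at the cost of one extra appeal to \eqref{Local_Eff_Est2}. Both are valid.
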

	
	\begin{proof}
		We divide the proof into three steps. For brevity, we omit the subscript $e$ in $\bm t_e$ and $\bm{n}_e$. For the sake of brevity, we denote $v=\Pi_h^{\nabla}w_h$ in what follows.
		
		Step 1: For any $K\in\mathcal{T}_h$, assume $\mathcal{T}_K$ is the auxiliary triangulation of $K$,
		and let $b_K=\sum_{\tau\in\mathcal{T}_K}(\lambda_{1,\tau}\lambda_{2,\tau}\lambda_{3,\tau})^2$
		be the element bubble function defined on $K$, where $\lambda_{i,\tau}$($i=1,2,3$)
		are the nodal basis functions at vertices of virtual triangles $\tau\in\mathcal{T}_K$.
		Set the polynomial $\phi|_K=b_K(f_h-\Delta^2  v )|_K$
		and extend it to be zero in $\Omega\backslash{K}$.
		Denote the auxiliary function as $\phi$, and it is obviously that $\phi\in H^2_0(\Omega)$ and
		$\partial_{\boldsymbol{n}}\phi|_{\partial{K}}=\phi|_{\partial{K}}=0$, which yields
		\begin{align}
			(f-\Delta^2v,\,\phi)
			&= a(u,\,\phi) - (\Delta^2v,\,\phi)_K=a^K(u-v,\phi). \nonumber
			%\\&=\mathcal{D}\Big((1-\nu) (\nabla^2 (u-v),\,\nabla^2\phi)_{K}
			%+ \nu (\Delta(u-v), \, \Delta\phi)_K\Big). \nonumber
		\end{align}
		Then, using the inverse inequalities of polynomials and the technique of bubble functions, there holds
		\begin{align}
			\|f_h - \Delta^2 v\|_{0,K}^2
			&\lesssim (f_h-\Delta^2 v,\,\phi)_K
			= (f_h-f,\,\phi)_{K} + (f-\Delta^2 v,\,\phi)_K \nonumber\\
			&= (f_h-f,\,\phi)_{K} + a^K(u-v,\,\phi)\nonumber\\
			&\leq \|f-f_h\|_{0,K}\|\phi\|_{0,K} + |u-v|_{2,K}|\phi|_{2,K}\nonumber\\
			& \lesssim (\|f-f_h\|_{0,K}+h_K^{-2}|u-v|_{2,K})\|\phi\|_{0,K}\nonumber\\
			& \lesssim (\|f-f_h\|_{0,K}+h_K^{-2}|u-v|_{2,K})\|f_h
			- \Delta^2 v\|_{0,K}.\nonumber
			%\label{f_hDelta2v_aux1}
		\end{align}
		The required estimate \eqref{Local_Eff_Est1} is obtained easily.
		
		Step 2. For any interior edge $e\in\mathcal{E}_h^0$ shared by two elements $K^{+}$ and $K^{-}$,
		assume that $\tau^{\pm}$ are two triangles sharing the edge $e$ in the auxiliary triangulation
		$\mathcal{T}_{K^{+}}\cup\mathcal{T}_{K^{-}}$, viz. $e=\tau^{+}\cap\tau^{-}$,
		where $\tau^{+}\in\mathcal{T}_{K^{+}}$ and $\tau^{-}\in\mathcal{T}_{K^{-}}$.
		Denote $T=\tau^{+}\cup\tau^{-}$. Let $\lambda_{1,e}^{\pm}$ and $\lambda_{2,e}^{\pm}$ be
		the corresponding nodal basis functions of two endpoints of $e$ in $\tau^{\pm}$, and
		%	$\boldsymbol{n}$ denote the unit normal vector of $e$ pointing from $\tau^{-}$ to $\tau^{+}$.
		%	Set $T$ be the union of $\tau^{+}\in\mathcal{T}_{K^{+}}$ and $\tau^-{}\in\mathcal{T}_{K^{-}}$
		%	which have a common edge $e$.
		denote the unit outward normal vector of $\tau^{+}$ and $\tau^{-}$ along $e$
		by $\boldsymbol{n}^{+}$ and $\boldsymbol{n}^{-}$, respectively.
		
		According to the technique of bubble functions,
		we need to construct an auxiliary polynomial $\phi\in H^2_0(T)$ satisfying
		$\partial_{\boldsymbol{n}}\phi|_{ \partial{T}\setminus{e}}=0$ and
		\begin{eqnarray}
			\label{Mnn_Jump_L2e_Est}
			\Big	\|\Big[\frac{\partial^2 v}{\partial\bm n^2}\Big]\Big\|^2_{0,e}\lesssim \Big(	\Big[\frac{\partial^2 v}{\partial\bm n^2}\Big],\partial_{\bm n}\phi\Big)_e.
		\end{eqnarray}
		
		%Let $T$ be an element in $\mathcal{T}_h$  with $e$ as one edge.
		Since the jump $[\frac{\partial^2 v}{\partial\bm n^2}]|_e$ is a polynomial on $e$,
		we extend it to $T$, denoted by $E_{T}([\frac{\partial^2 v}{\partial\bm n^2}])$,
		so that this function is constant along the lines perpendicular to $e$.
		%It is obvious that the equation of line $e$, denoted by $l_e$, has value zero on $e$
		%and its values of partial derivative along the normal vectors is one.
		Let $\phi=b_eE_T([\frac{\partial^2 v}{\partial\bm n^2}])l_e$,
		where $l_e$ is the function of line $e$, and $b_e$ is edge bubble function of $e$ defined by
		$b_e=(\lambda_{1,e}^{+}\lambda_{2,e}^{+}\lambda_{1,e}^{-}\lambda_{2,e}^{-})^2$.
		A simple manipulation yields
		$\partial_{\boldsymbol{n}}\phi|_{e}=b_e[\frac{\partial^2 v}{\partial\bm n^2}]|_e$ and $\phi|_e = 0$. 	
		Next, by \eqref{Mnn_Jump_L2e_Est},
		there holds
		\begin{align}
			\Big	\|\Big[\frac{\partial^2 v}{\partial\bm n^2}\Big]\Big\|^2_{0,e}\lesssim \Big(	\Big[\frac{\partial^2 v}{\partial\bm n^2}\Big],\partial_{\bm n}\phi\Big)=a^T(v,\,\phi) - (\Delta^2v,\,\phi)_T.
			\label{Local_Eff_Est2_aux1}
		\end{align}
		
		Since $\phi\in H^2_0(T)$ can be viewed as a function in $H_0^2(\Omega)$ by zero extension,
		we have
		\begin{align}
			\label{Local_Eff_Est2_aux2}
			a^T(u,\, \phi)=(f,\phi)_T.
		\end{align}
		% This equation holds for $e\in \mathcal{E}_h^0$.
		Using \eqref{Local_Eff_Est2_aux1}, \eqref{Local_Eff_Est2_aux2}, the Cauchy-Schwarz inequality and
		the inverse inequalities of polynomials, we find
		\begin{align}
			\Big	\|\Big[\frac{\partial^2 v}{\partial\bm n^2}\Big]\Big\|^2_{0,e}
			&\lesssim 	a^T(v-u,\, \phi) + (f-\Delta^2v,\,\phi)_T \nonumber\\
			&\lesssim\sum_{\tau=\tau^{+},\tau^{-}}\Big(|v-u|_{2,\tau}|\phi|_{2,\tau}
			+ \|f-\Delta^2 v\|_{0,\tau}\|\phi\|_{0,\tau}\Big) \nonumber\\
			&\lesssim\sum_{\tau=\tau^{+},\tau^{-}}\Big(h_{\tau}^{-2}|v-u|_{2,\tau}
			+ \|f-\Delta^2 v\|_{0,\tau}\Big)\|\phi\|_{0,\tau}.
			\label{Local_Eff_Est2_aux3}
		\end{align}
		On the other hand, the inverse inequalities of polynomials imply
		\begin{align}
			\|\phi\|_{0,\tau^{\pm}}
			&=h_{\tau^{\pm}}\Big\|b_eE_T\Big(\Big[\frac{\partial^2 v}{\partial\bm n^2}\Big]\Big|_e\Big)
			\frac{l_e}{h_{\tau^{\pm}}}\Big\|_{0,\tau^{\pm}}
			\leq h_{\tau^{\pm}}\Big\|E_T\Big(\Big[\frac{\partial^2 v}{\partial\bm n^2}\Big]\Big|_e\Big)\Big\|_{0,\tau^{\pm}}\nonumber\\
			&\lesssim h_{\tau^{\pm}}^2\Big\|E_T\Big(\Big[\frac{\partial^2 v}{\partial\bm n^2}\Big]\Big|_e\Big)\Big\|_{\infty,\tau^{\pm}}
			\lesssim h_{\tau^{\pm}}^2\Big\|\Big[\frac{\partial^2 v}{\partial\bm n^2}\Big]\Big\|_{\infty,e}\nonumber\\
			&\lesssim h_{\tau^{\pm}}^{3/2}\Big\|\Big[\frac{\partial^2 v}{\partial\bm n^2}\Big]\Big\|_{0,e}
			\lesssim h_{K^{\pm}}^{3/2}\Big\|\Big[\frac{\partial^2 v}{\partial\bm n^2}\Big]\Big\|_{0,e}.
			\label{Local_Eff_Est2_aux4}
		\end{align}
		Hence,
		%from \eqref{Local_Eff_Est1}, \eqref{Local_Eff_Est2_aux3} and \eqref{Local_Eff_Est2_aux4},
		%it follows that	
		\begin{align}
			h_e \Big\|\Big[\frac{\partial^2 v}{\partial\bm n^2}\Big]\Big\|_{0,e}^2
			&\lesssim\sum_{\tau=\tau^{+},\tau^{-}}\Big(|v-u|_{2,\tau}^2
			+ h_{\tau}^{4}\|f-\Delta^2 v\|_{0,\tau}^2\Big) \nonumber\\
			&\lesssim |v-u|_{2,\omega(e)}^2 + \sum_{K^{'}\in\omega(e)}\eta_{6,K^{'}}^2.
			\nonumber
		\end{align}
		
		Step 3. Analogously, we need to construct an edge bubble function $b_e$ and
		an auxiliary polynomial $\phi\in H^2(T)$ such that
		\[
		\phi|_e=b_e\Big[ \frac{\partial \Delta v}{\partial \bm n} + \frac{\partial^3  v}{\partial \bm{n}\partial\bm t^2} \Big]\Big|_e
		\quad \text{and} \quad
		\phi|_{\overline{\partial T\setminus e}}
		=\partial_{\boldsymbol{n}}\phi|_{\overline{\partial T\setminus e}}=0,
		\]
		where $\overline{\partial T\setminus e}$ is the closure of $\partial T\setminus e$.
		Divide the triangle ${\tau}^{-}$ (resp. ${\tau}^{+}$)
		into two small triangles ${\tau}_1^{-},{\tau}_2^{-}$ (resp. ${\tau}_1^{+}, {\tau}_2^{+}$)
		by connecting the midpoint of $e$ (denoted by $a_e$)
		and the vertex of ${\tau}^{-}$ (resp. ${\tau}^{+}$) opposite to the edge $e$.
		Assume the barycenter coordinate functions of the two small triangles at point $a_e$
		are $b_1^{-}$, $b_2^{-}$ (resp. $b_1^{+}$, $b_2^{+}$).
		Let $b_e=(b_1^{-}b_2^{-}b_1^{+}b_2^{+})^{2}$ and
		$\phi=b_eE_T\Big(\Big[ \frac{\partial \Delta v}{\partial \bm n} + \frac{\partial^3  v}{\partial \bm{n}\partial\bm t^2} \Big]\Big)$.
		Then, we can check that these two functions satisfy the desired conditions.
		Furthermore, we have
		\begin{align}
			\Big\|\Big[ \frac{\partial \Delta v}{\partial \bm n} + \frac{\partial^3  v}{\partial \bm{n}\partial\bm t^2} \Big]\Big\|_{0,e}^2
			\lesssim \Big(\Big[ \frac{\partial \Delta v}{\partial \bm n} + \frac{\partial^3  v}{\partial \bm{n}\partial\bm t^2} \Big],\phi\Big)_e,
			\nonumber
		\end{align}
		and by Green's identity,
		there holds
		\small{\begin{align}
				\Big\| \Big[ \frac{\partial \Delta v}{\partial \bm n} + \frac{\partial^3  v}{\partial \bm{n}\partial\bm t^2} \Big] \Big\|_{0,e}^2
				&\lesssim \sum_{\tau=\tau^-,\tau^+} \Big((\Delta^2 v, \, \phi)_{\tau} - a^{\tau} (v, \, \phi)
				+ \Big(\Big[\frac{\partial^2 v}{\partial\bm n^2}\Big], \,\partial_{\boldsymbol{n}^{\tau}}\phi\Big)_{0,e}\Big),
				\label{MnttQn_Jump_L2e_Est}
		\end{align}}
		where $\boldsymbol{n}^{\tau}$ is the unit outward normal vector of $\tau$ along $e$.
		
		Since $\phi\in H_0^2(T)$, the equation \eqref{Local_Eff_Est2_aux2} holds.
		Then, using  \eqref{Local_Eff_Est2_aux2}, \eqref{MnttQn_Jump_L2e_Est},
		the Cauchy-Schwarz inequality and the inverse inequalities of polynomials, we have
		\begin{align}
			\Big\| \Big[& \frac{\partial \Delta v}{\partial \bm n} + \frac{\partial^3  v}{\partial \bm{n}\partial\bm t^2} \Big] \Big\|_{0,e}^2
			\lesssim\sum\limits_{\tau=\tau^{+},\tau^{-}}
			\Big((\Delta^2v-f,\,\phi)_{\tau} + a^{\tau} (u-v, \, \phi) \Big)
			+ \Big(\Big[\frac{\partial^2 v}{\partial\bm n^2}\Big],\partial_{\boldsymbol{n}}\phi\Big)_{0,e}\nonumber\\
			&\lesssim\sum_{\tau=\tau^{+},\tau^{-}}\Big( \|f-\Delta^2v\|_{0,\tau}
			+ h_{\tau}^{-2}|v-u|_{2,\tau}\Big)\|\phi\|_{0,\tau}
			+ \Big\|\Big[\frac{\partial^2 v}{\partial\bm n^2}\Big]\Big\|_{0,e}\|\partial_{\boldsymbol{n}}\phi\|_{0,e}
			\label{Local_Eff_Est3_aux1}.
		\end{align}	
		Similar to \eqref{Local_Eff_Est2_aux4}, one has
		\small{\begin{align}
				\|\phi\|_{0,\tau^{\pm}}\lesssim \Big\|E_T\Big(\Big[ \frac{\partial \Delta v}{\partial \bm n} + \frac{\partial^3  v}{\partial \bm{n}\partial\bm t^2} \Big] \Big)\Big\|_{0,\tau^{\pm}}
				\lesssim h_{K^{\pm}}^{1/2}\Big\|\Big[ \frac{\partial \Delta v}{\partial \bm n} + \frac{\partial^3  v}{\partial \bm{n}\partial\bm t^2} \Big]\Big\|_{0,e}.
				\label{Local_Eff_Est3_aux2}
		\end{align}}
		Noting that $h_K\eqsim{h_e}$, we have by the trace inequality and
		the inverse inequalities of polynomials that
		\begin{align}
			\|\partial_{\boldsymbol{n}}\phi\|_{0,e}
			\lesssim h_T^{1/2}|\phi|_{2,T} + h_T^{-1/2}|\phi|_{1,T}
			\lesssim h_e^{-3/2}\|\phi\|_{0,T}
			\label{Local_Eff_Est3_aux3}.
		\end{align}
		The desired result follows immediately from  \eqref{Local_Eff_Est1}, \eqref{Local_Eff_Est2},
		\eqref{Local_Eff_Est3_aux1}, \eqref{Local_Eff_Est3_aux2}
		and \eqref{Local_Eff_Est3_aux3}.
	\end{proof}
	
	The following theorem shows the efficiency of the estimator globally, which is a direct consequence of the last theorem.
	\begin{theorem}[Lower bound]
		\label{Thm4_APoster_Err_Est_Lower}
		Given $f\in H^{k-2}(\Omega)$ with $k\ge 2$, let $u$, $u_h$ and $\eta(u_h)$ be given as in Theorem \ref{upper bound}. Then there holds
		\begin{equation}
			\eta(u_h) \lesssim \|u-u_h\|_{h} + \mathcal{O}(h^{k-1}).
			\label{u_uh_H2_lower_Qkh}
		\end{equation}
	\end{theorem}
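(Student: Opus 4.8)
The plan is to bound each of the six contributions to $\eta(u_h)^2=\sum_{K\in\mathcal T_h}\sum_{i=1}^{6}\eta_{i,K}^2$ separately, organised into three groups: the data oscillation term $\eta_{5,K}$, which is handled by pure polynomial approximation; the ``residual'' terms $\eta_{2,K},\eta_{3,K},\eta_{6,K}$, which are controlled by the local efficiency estimates of Theorem~\ref{posteriori} applied with $w_h=u_h$, up to $|u-\Pi_h^\nabla u_h|_{2,h}$; and the two ``virtual'' terms $\eta_{4,K}$ (the consistency residual) and $\eta_{1,K}$ (the projector jump), which I would treat directly and which are the places where the $\mathcal O(h^{k-1})$ term enters. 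Throughout I would abbreviate $v:=\Pi_h^\nabla u_h$ and repeatedly use that $\Pi_K^\nabla$ reproduces $\mathbb P_k(K)$, that $u_h-q\in V_k(K)$ for every $q\in\mathbb P_k(K)$, the norm equivalences \eqref{NormEquivH1}, \eqref{Norm_Equiv_Qk_H2K}, the inverse inequality \eqref{inverse_inequality}, and the boundedness of $\Pi_h^\nabla,\Pi_h^\Delta$ from Lemma~\ref{lem:bound}.

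First, for the oscillation term, $\eta_{5,K}=h_K^2\|f-\Pi_{0,K}^k f\|_{0,K}\lesssim h_K^{k}|f|_{k-2,K}$ by the Dupont--Scott estimate \eqref{BHe1} and the best-approximation property of $\Pi_{0,K}^k$, whence $\big(\sum_K\eta_{5,K}^2\big)^{1/2}\lesssim h^{k}\|f\|_{k-2}=\mathcal O(h^{k-1})$. Next, taking $w_h=u_h$ in Theorem~\ref{posteriori} and noting the algebraic identities $h_K^4\|f_h-\Delta^2 v\|_{0,K}^2=\eta_{6,K}^2$, $|e|\,\|[\partial_{\bm n_e}^2 v]\|_{0,e}^2=\eta_{2,e}^2$ and $|e|^3\|[\partial_{\bm n_e}\Delta v+\partial_{\bm n_e}\partial_{\bm t_e}^2 v]\|_{0,e}^2=\eta_{3,e}^2$, summation over elements and edges together with the finite-overlap property of the patches $\omega(e)$ yields
\[
\sum_{K\in\mathcal T_h}\big(\eta_{2,K}^2+\eta_{3,K}^2+\eta_{6,K}^2\big)\lesssim |u-v|_{2,h}^2+\sum_{K\in\mathcal T_h}\eta_{5,K}^2 .
\]
It therefore remains to estimate $|u-v|_{2,h}$, $\eta_{4,K}$ and $\eta_{1,K}$ in terms of $\|u-u_h\|_h$ plus a residual of order $h^{k-1}$.

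For $\eta_{4,K}$ I would argue as follows: since $u_h-v=(u_h-q)-\Pi_K^\nabla(u_h-q)$ and $u_h-q\in V_k(K)$ for any $q\in\mathbb P_k(K)$, the norm equivalence \eqref{NormEquivH1} and Lemma~\ref{lem:bound} give $\big(S^K(u_h-v,u_h-v)\big)^{1/2}\eqsim|(u_h-q)-\Pi_K^\nabla(u_h-q)|_{1,K}\lesssim|u_h-q|_{1,K}$; choosing $q=p+r$ with $p\in\mathbb P_k(K)$ a standard approximation of $u$ ($|u-p|_{m,K}\lesssim h_K^{k+1-m}|u|_{k+1,K}$) and $r\in\mathbb P_1(K)$ a Dupont--Scott approximation of $u_h-u\in H^2(K)$ ($|(u_h-u)-r|_{1,K}\lesssim h_K|u_h-u|_{2,K}$) yields $|u_h-q|_{1,K}\lesssim h_K|u-u_h|_{2,K}+h_K^{k}|u|_{k+1,K}$, hence $\eta_{4,K}\lesssim|u-u_h|_{2,K}+h_K^{k-1}|u|_{k+1,K}$. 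Combining this with $|u-v|_{2,K}\le|u-u_h|_{2,K}+|u_h-v|_{2,K}$ and $|u_h-v|_{2,K}\lesssim h_K^{-1}|u_h-v|_{1,K}\eqsim\eta_{4,K}$ (inverse inequality \eqref{inverse_inequality} and \eqref{NormEquivH1}) shows $|u-v|_{2,h}\lesssim|u-u_h|_{2,h}+\mathcal O(h^{k-1})$ as well. For the jump estimator, $\lambda_e\eqsim1$ gives $\sum_e\eta_{1,e}^2\eqsim J_1(u_h,u_h)$, and by the triangle inequality for the semi-inner product $J_1$ and the definition \eqref{normm}, $J_1(u_h,u_h)^{1/2}\le J_1(u-u_h,u-u_h)^{1/2}+J_1(u,u)^{1/2}\le\|u-u_h\|_h+J_1(u,u)^{1/2}$. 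Since $[\partial_{\bm n_e}u]_e=0$ on every interior edge and $\partial_{\bm n_e}u=0$ on every boundary edge, $[\partial_{\bm n_e}\Pi_h^\nabla u]_e=[\partial_{\bm n_e}(\Pi_h^\nabla u-u)]_e$, which I would bound by the trace inequality \eqref{trace} together with $|u-\Pi_K^\nabla u|_{1,K}\lesssim h_K^{k}|u|_{k+1,K}$ and $|u-\Pi_K^\nabla u|_{2,K}\lesssim h_K^{k-1}|u|_{k+1,K}$ (the latter from the former via an inverse estimate for polynomials), obtaining $J_1(u,u)=\mathcal O(h^{2k-2})$. Collecting the three groups, and using $\sum_K\eta_{i,K}^2\lesssim\sum_{e}\eta_{i,e}^2$ for $i=1,2,3$, we arrive at $\eta(u_h)^2\lesssim\|u-u_h\|_h^2+\mathcal O(h^{2(k-1)})$, which is \eqref{u_uh_H2_lower_Qkh}.

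The main obstacle is precisely the bound for $\eta_{4,K}$ (and, through it, for $|u-v|_{2,h}$): one must convert the $h_K^{-1}$-weighted $H^1$-seminorm of the consistency error $u_h-\Pi_h^\nabla u_h$ into the $H^2$-seminorm of the true error $u-u_h$ \emph{without} losing a power of $h_K$, and this requires combining the reproduction property of $\Pi_K^\nabla$ on $\mathbb P_k(K)$, the membership $u_h-q\in V_k(K)$, and the extra regularity $u\in H^{k+1}(\Omega)$ in just the right way; once this is in place the remaining steps are routine consequences of Theorem~\ref{posteriori}, the trace and inverse inequalities, and standard polynomial approximation. As a byproduct the excluded jump estimator $\eta_{0,K}$ of \eqref{eta0} is also efficient, being dominated by $\eta_{4,K}$ by virtue of \eqref{jump0}.
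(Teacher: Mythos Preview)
Your argument is correct and follows the same overall architecture as the paper's proof: invoke Theorem~\ref{posteriori} with $w_h=u_h$ to control $\eta_{2,K},\eta_{3,K},\eta_{6,K}$ by $|u-\Pi_h^\nabla u_h|_{2,h}$ plus oscillation, bound the oscillation $\eta_{5,K}$ by polynomial approximation, and then reduce $|u-\Pi_h^\nabla u_h|_{2,h}$, $\eta_{4,K}$ and $\sum_e\eta_{1,e}^2\eqsim J_1(u_h,u_h)$ to $\|u-u_h\|_h$ plus an $\mathcal O(h^{k-1})$ remainder via $J_1(u,u)=\mathcal O(h^{2k-2})$.

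The one place where you genuinely deviate from the paper is the estimate of $\eta_{4,K}$. The paper argues indirectly: it writes $\eta_{4,K}\eqsim h_K^{-1}|u_h-\Pi_K^\nabla u_h|_{1,K}$, bounds this by $|u-u_h|_{2,K}+|u-\Pi_h^\nabla u_h|_{2,K}$, and then controls $|u-\Pi_h^\nabla u_h|_{2,K}$ through the $H^2$-boundedness of $\Pi_h^\nabla$ together with the approximation estimate $|u-\Pi_h^\nabla u|_{2,K}\lesssim h_K^{k-1}|u|_{k+1,K}$. Your route is more self-contained: you exploit $\Pi_K^\nabla q=q$ for $q\in\mathbb P_k(K)$ and pick a composite approximant $q=p+r$ (with $p$ approximating $u$ in $\mathbb P_k$ and $r\in\mathbb P_1$ approximating $u_h-u$) to obtain $\eta_{4,K}\lesssim h_K^{-1}|u_h-q|_{1,K}\lesssim|u-u_h|_{2,K}+h_K^{k-1}|u|_{k+1,K}$ directly. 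This avoids appealing to mapping properties of $\Pi_h^\nabla$ on the exact solution $u$ and makes the origin of the $\mathcal O(h^{k-1})$ loss completely transparent; the paper's route, on the other hand, ties the consistency residual more visibly to the quantity $|u-\Pi_h^\nabla u_h|_{2,K}$ already appearing in Theorem~\ref{posteriori}. Both yield the same bound.
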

	\begin{proof}
		It follows from the definition of $\eta(u_h)$ and Theorem \ref{posteriori} that
		\begin{align}\label{etas}
			\eta^2(u_h)\lesssim& \sum_{K\in \mathcal{T}_h} | u-\Pi_h^{\nabla}u_h |_{2,K}^2
			+ J_1(u-u_h,u-u_h)
			% +\sum_{e\in \mathcal{E}_h}\frac{\lambda_e}{|e|}\Big\| \Big[ \frac{\partial (u-u_h)}{\partial \bm n_e} \Big]\Big\|_{0,e}^2
			\notag\\
			&+\sum_{K\in\mathcal{T}_h}\eta_{4,K}^2+\sum_{K\in\mathcal{T}_h}\eta_{5,K}^2.
		\end{align}
		From \eqref{right}, we obtain
		\begin{equation}\label{etas1}
			\eta_{5,K}=h_K^2\|f-f_h\|_{0,K}\lesssim h^{k+2}\|f\|_{k-2}=\mathcal{O}(h^{k+2}).
		\end{equation}
		By the inverse inequality \eqref{inverse_inequality} and the norm  equivalence \eqref{NormEquivH1},
		\begin{align}\label{etas2}
			| u-\Pi_h^{\nabla}u_h |_{2,K}^2
			& \lesssim | u- u_h |_{2,K}^2 + | u_h-\Pi_h^{\nabla}u_h |_{2,K}^2 \nonumber\\
			& \lesssim | u- u_h |_{2,K}^2
			+h_K^{-2}| u_h-\Pi_h^{\nabla}u_h |_{1,K}^2\nonumber\\
			& \lesssim| u- u_h |_{2,K}^2 +\eta_{4,K}^2.
		\end{align}
		It follows from the norm equivalence in \eqref{NormEquivH1}, the inverse inequality \eqref{inverse_inequality} and  the triangle inequality  that
		\begin{align*}
			\eta_{4,K}& = h_K^{-1} \| \boldsymbol{\chi}(u_h-\Pi_h^{\nabla}u_h) \|_{\ell^2}
			\eqsim h_K^{-1}| u_h-\Pi_h^{\nabla}u_h |_{1,K}
			\lesssim  | u-u_h |_{2,K}  + | u-\Pi_h^{\nabla}u_h |_{2,K}.
		\end{align*}
		Here, by the triangle inequality, Lemma \ref{lem:bound} and \eqref{BHe1}, we derive
		\begin{align*}
			| u-\Pi_h^{\nabla}u_h |_{2,K}
			\le | \Pi_h^{\nabla}u - \Pi_h^{\nabla}u_h |_{2,K} + | u- \Pi_h^{\nabla}u|_{2,K}
			\lesssim  |u - u_h |_{2,K} + h^{k-1}.
		\end{align*}
From the continuity of $u$, the trace inequality and \eqref{BHe1}, we get
\begin{align}\label{j1}
     J_1(u-u_h,u-u_h) &\lesssim   \sum_{e\in\mathcal{E}_h}\frac{1}{|e|}\Big\|\Big[\frac{\partial ( \Pi_h^\nabla u-u)}{\partial \bm{n}_e}\Big]\Big\|_{0,e}^2+\sum_{e\in\mathcal{E}_h}\frac{1}{|e|}\Big\|\Big[\frac{\partial  \Pi_h^\nabla u_h}{\partial \bm{n}_e}\Big]\Big\|_{0,e}^2\notag\\
     &\lesssim \sum_{K\in \mathcal{T}_h}|\Pi_h^\nabla u-u|_{2,K}^2+h_K^{-2}|\Pi_h^\nabla u-u |_{1,K}^2+J_1(u_h,u_h)\notag\\
     &\lesssim h^{2k-2}+J_1(u_h,u_h).
\end{align}
		Inserting \eqref{etas1},\eqref{etas2} and \eqref{j1} into \eqref{etas}, one has
		\begin{align}\label{resu}
			\eta^2(u_h)\lesssim& \sum_{K\in \mathcal{T}_h} | u-u_h |_{2,K}^2+J_1(u_h,u_h)+\mathcal{O}(h^{2k-2}).
		\end{align}
The proof is completed by the definition of the norm $\|\cdot\|_h$.
	\end{proof}
	
\begin{remark}
In the lower bound estimate, the error estimates for $u - \Pi_h^\nabla u$ are not the a priori estimates for the discrete problem.
\end{remark}

	\subsection{Comparison with the analysis of Morley-type virtual elements}

	Ref.~\cite{Carsten2023Morley} provided the a posteriori error analysis for the biharmonic equation using the lowest-order Morley-type or $H^2$-nonconforming virtual elements. However, the local error estimators there do not account for the jumps of the second- and third-order derivatives across interior edges, specifically $\eta_{2,K}$ and $\eta_{3,K}$. As seen in the proof of Theorem \ref{upper bound}, these additional terms arise from integrating by parts for $I_2$ in \eqref{upperI2} and estimating the average term of $J_3$ in \eqref{upperJ3}. For the Morley-type VEM, including $J_i$ ($i=1,2,3$) is unnecessary. Therefore, it is expected to avoid integration by parts of $I_2$ for the Morley-type VEM, which is achieved by means of a special property of the $H^2$-nonconforming interpolation operator described below.
	
	\begin{lemma}\label{lem:H2ncIh}
		Let $V_k^2(K)$ be the local $H^2$-nonconforming virtual element space \cite{Antonietti-Manzini-Verani-2018,Zhao-Zhang-Chen-2018,Carsten2023Morley}. Denote by $I_K^2$ the interpolation operator from $H^2(K)$ to $V_k^2(K)$. Then for any $v\in H^2(K)$ and $w\in V_k^2(K)$, there holds
		\[(\nabla^2 (v - I_K^2v),\nabla^2  w )_K = 0,\]
		This implies that $I_K^2$ is also an $H^2$-elliptic projector from $H^2(K)$ to $V_k^2(K)$.
	\end{lemma}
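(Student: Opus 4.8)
The plan is to unfold the definition of the interpolation operator $I_K^2$ for the $H^2$-nonconforming virtual element space and show directly that the difference $v - I_K^2 v$ is $a^K$-orthogonal to the whole local space $V_k^2(K)$. First I would recall the defining degrees of freedom of $V_k^2(K)$: besides vertex and edge moments, the key fact is that the space $V_k^2(K)$ is \emph{enhanced} so that, for every $w\in V_k^2(K)$, the quantity $(\nabla^2 w,\nabla^2 q)_K$ is completely determined by the nonconforming DoFs for all $q\in\mathbb{P}_k(K)$; equivalently, $V_k^2(K)$ is built precisely so that the $H^2$-projection $\Pi_K^{\Delta,nc}$ onto $\mathbb{P}_k(K)$ is computable from the DoFs, and the interpolation $I_K^2$ is defined by matching all these DoFs. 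I would then integrate by parts twice on $\nabla^2(v-I_K^2v):\nabla^2 w$ over $K$ and split $w$ as $\Pi_K^{\Delta,nc}w + (w-\Pi_K^{\Delta,nc}w)$.

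The main computation is a Green's formula for the biharmonic form: for smooth enough $\psi$ and $w$,
\begin{equation*}
(\nabla^2\psi,\nabla^2 w)_K = (\Delta^2\psi, w)_K + \int_{\partial K}\Big(\frac{\partial^2\psi}{\partial\bm n^2}\,\frac{\partial w}{\partial\bm n} - \Big(\frac{\partial\Delta\psi}{\partial\bm n}+\frac{\partial^3\psi}{\partial\bm n\partial\bm t^2}\Big) w\Big)\d s + \sum_{z\in\mathcal V(K)}\Big[\frac{\partial^2\psi}{\partial\bm n\partial\bm t}\Big](z)\, w(z).
\end{equation*}
Applying this with $\psi = v - I_K^2 v$, one checks term by term that every contribution vanishes: the interior term because $\Delta^2\psi$ pairs against $w$ through the element moments that $I_K^2$ reproduces (for $k$ large; for the lowest order $\Delta^2\psi$ is handled via the polynomial part); the edge terms because $I_K^2$ matches the edge moments of $\psi$ and of $\partial_{\bm n}\psi$ up to the required degrees, against which the polynomial traces of $\partial^2_{\bm n}w$, $\partial_{\bm n}\Delta w + \partial^3_{\bm n\bm t^2}w$, etc., are tested; and the vertex terms because $I_K^2$ reproduces vertex values. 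The cleanest way to organize this is to observe that $I_K^2$ by construction reproduces exactly those moments that are dual to the boundary and interior data appearing in the Green's formula for any $w\in V_k^2(K)$, which is the defining design principle of the enhanced nonconforming space.

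The final statement, that $I_K^2$ coincides with the $H^2$-elliptic projector, then follows immediately: the orthogonality $(\nabla^2(v-I_K^2v),\nabla^2 w)_K=0$ for all $w\in V_k^2(K)$ together with $I_K^2 v\in V_k^2(K)$ is exactly the variational characterization of the $H^2$-projection onto $V_k^2(K)$, and one should additionally note that on the kernel of $\nabla^2$ (i.e.\ $\mathbb P_1(K)$) the two operators agree because $I_K^2$ fixes vertex values and vertex gradients, which pin down affine functions; hence the projection is uniquely determined and equals $I_K^2$.

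I expect the main obstacle to be the careful bookkeeping of which DoFs of the nonconforming space are dual to which boundary integrand in the biharmonic Green's formula, and in particular treating the lowest-order case $k=2$ (where interior moments are absent and $\Delta^2$ of the virtual functions vanishes) uniformly with the general case. The pairing $(\Delta^2\psi,w)_K$ needs the enhancement property to be invoked correctly — it is here that one uses that $V_k^2(K)$ was enhanced precisely so that $(\nabla^2 w,\nabla^2 q)_K$ is computable, equivalently that $w$ has the right moments against $\mathbb P_k(K)$; this is the one place where a plain "integrate by parts" is not enough and the special construction of the space must be used.
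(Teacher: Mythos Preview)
Your overall strategy---integrate by parts in the biharmonic Green's identity and observe that every boundary/interior term is annihilated because $I_K^2$ matches the corresponding degrees of freedom---is the right one, and it is essentially the argument behind the cited reference. But the direction in which you integrate by parts is wrong, and as written the proof does not go through.

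You apply the identity with $\psi = v - I_K^2 v$ in the slot that receives the fourth-order derivative, producing the term $(\Delta^2\psi, w)_K$. Since the hypothesis is only $v\in H^2(K)$, the quantity $\Delta^2(v-I_K^2v)$ is not defined even distributionally as an $L^2$ function, and the subsequent ``pairing against $w$ through the element moments'' makes no sense. The fix is to reverse the roles: for $w\in V_k^2(K)$ the defining property of the Morley-type space gives $\Delta^2 w\in\mathbb{P}_{k-4}(K)$, and $w|_e$, $\partial_{\bm n}w|_e$ are polynomials of the right degrees. Applying Green's formula with the derivatives landing on $w$ yields
\[
(\nabla^2(v-I_K^2v),\nabla^2 w)_K
= (v-I_K^2v,\Delta^2 w)_K
+ \int_{\partial K}\frac{\partial^2 w}{\partial\bm n^2}\,\partial_{\bm n}(v-I_K^2v)\,\d s
- \int_{\partial K}\Big(\tfrac{\partial\Delta w}{\partial\bm n}+\tfrac{\partial^3 w}{\partial\bm n\partial\bm t^2}\Big)(v-I_K^2v)\,\d s + \text{vertex terms},
\]
and now each integrand is a polynomial of degree that is \emph{exactly} covered by the DoFs of the nonconforming space (interior moments up to $k-4$, edge moments of the normal derivative up to $k-2$, edge moments of the trace up to $k-3$, vertex values). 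Hence every term vanishes because $I_K^2$ preserves those DoFs. Your discussion of ``enhancement'' is not needed here; the Morley-type space is designed so that $\Delta^2 w$ and the relevant boundary traces of $w$ are already polynomials of the required degree, without invoking any lifting.
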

	\begin{proof}
		See the proof of \cite[Corollary 4.1]{Huang2021YuMedius}.
	\end{proof}

	Our analysis of the upper bound in Theorem \ref{upper bound} can be extended to the Morley-type VEM. Using Lemma \ref{lem:H2ncIh}, we can avoid tedious integration by parts for the term $I_2$ as follows:
	\[
	I_2 = (f_h, \phi - I_h^2 \phi) - \sum_{K \in \mathcal{T}_h} a^K (\Pi_h^{\Delta} u_h, \phi - I_h^2 \phi) = (f_h, \phi - I_h^2 \phi) = (f_h - \Delta^2 \Pi_h^{\Delta} u_h, \phi - I_h^2 \phi),
	\]
	where the last equality holds because $ \Delta^2 \Pi_h^{\Delta} u_h |_K \in \mathbb{P}_{k-4}(K) $ and $ (I_h^2 \phi, p)_K = (\phi, p)_K $ for $ p \in \mathbb{P}_{k-4}(K) $. Consequently, there is no need to include the higher-order jump terms $\eta_{2,K}$ and $\eta_{3,K}$ in the global error estimator for the Morley-type VEMs. However, this does not apply to other types of VEMs for the biharmonic equation even for the $C^1$-continuous VEMs discussed in \cite{ChenHuangLin2022avem}.

	For the Morley-type VEM in \cite{Carsten2023Morley}, unlike the triangle inequality
	\begin{equation}\label{split0}
		|u-u_h|_{2,K} \le |u-E_h u_h|_{2,K} + |u_h - E_h u_h|_{2,K}
	\end{equation}
	used in \eqref{uppereh}, they utilize the following inequality
	\begin{equation}\label{split1}
		|u-u_h|_{2,K} \le |u-\Pi_h^\nabla u_h|_{2,K} + |\Pi_h^\nabla u_h - u_h|_{2,K}.
	\end{equation}
	The second term corresponds to the error estimator $\eta_{4,K}$. For the first term, since $\Pi_h^\nabla u_h|_K$ is a polynomial on $K$, they treat $\Pi_h^\nabla u_h$ as a finite element function on the virtual triangulation $\widehat{\mathcal{T}}$, defining $\widehat{E}_h: \mathbb{P}_2(\widehat{\mathcal{T}}) \to H_0^2(\Omega)$ as the enriching operator for the FEM. They then decompose the first term as
	\begin{equation}\label{split2}
		|u-\Pi_h^\nabla u_h|_{2,K} \le |u- \widehat{E}_h \Pi_h^\nabla u_h|_{2,K} + |\Pi_h^\nabla u_h- \widehat{E}_h \Pi_h^\nabla u_h|_{2,K}.
	\end{equation}
	Let $\widetilde{E}_h = \widehat{E}_h \Pi_h^\nabla$. The decomposition given by \eqref{split1} and \eqref{split2} can be written as
	\begin{align*}
		|u-u_h|_{2,K}
		& \le |u-\widetilde{E}_h u_h|_{2,K} + |u_h - \widetilde{E}_h u_h|_{2,K} \\
		& \le |u-\widetilde{E}_h u_h|_{2,K} + |u_h - \Pi_h^\nabla u_h|_{2,K} + |\Pi_h^\nabla u_h - \widetilde{E}_h u_h|_{2,K}.
	\end{align*}
	The first term can be estimated as for the first term in \eqref{split0}, namely the first term in \eqref{uppereh} for Theorem \ref{upper bound}. By using the standard result for $\widehat{E}_h$, the third term is bounded by
	\[|\Pi_h^\nabla u_h - \widetilde{E}_h u_h|_{2,h}^2 = |\Pi_h^\nabla u_h- \widehat{E}_h \Pi_h^\nabla u_h|_{2,h}^2 \lesssim \sum_{e \in \mathcal{E}_h}
	\Big( \frac{1}{|e|^3} \|[\Pi_h^\nabla v]\|_{0,e}^2 + \frac{1}{|e|} \Big\| \frac{\partial \Pi_h^\nabla v}{\partial \bm{n}_e} \Big\|_{0,e}^2 \Big).\]
	
	For any $ v \in V_h $, it is evident that $ E_h \phi $ can be defined for $ \phi = \Pi_h^\nabla v $. By the definition of $ E_h $, we have $ E_h v = E_h \Pi_h^\nabla v $, which along with Lemma \ref{lem:enriching} implies
	\[
	|\Pi_h^\nabla v - E_h v|_{2,h}^2 \lesssim \sum_{e \in \mathcal{E}_h}
	\Big( \frac{1}{|e|^3} \|[\Pi_h^\nabla v]\|_{0,e}^2 + \frac{1}{|e|} \Big\| \frac{\partial \Pi_h^\nabla v}{\partial \bm{n}_e} \Big\|_{0,e}^2 \Big).
	\]
	Therefore, we can perform the similar decomposition
	\begin{align*}
		|u-u_h|_{2,K}
		& \le |u-E_h u_h|_{2,K} + |u_h - E_h u_h|_{2,K} \\
		& \le |u-E_h u_h|_{2,K} + |u_h - \Pi_h^\nabla u_h|_{2,K} + |\Pi_h^\nabla u_h - E_h u_h|_{2,K}.
	\end{align*}
	This approach allows us to directly derive the error estimators in \cite{Carsten2023Morley} for the Morley VEMs of arbitrary order.

\section{Numerical experiments} \label{Sec:numerical}
	
In this section, we present several benchmark tests for which the exact solution is known. These tests illustrate the robustness of the residual-type a posteriori error estimator and demonstrate the efficiency of the adaptive VEM. Unless otherwise specified, we only consider the lowest-order element $k=2$. We also provide the treatment of inhomogeneous boundary value conditions.

\subsection{Treatment of inhomogeneous boundary value conditions}

Consider the fourth-order problem with inhomogeneous boundary value conditions
	\begin{equation}\label{strongform1}
		\left\{\begin{array}{ll}
			\Delta^2u = f & {\rm in}~~\Omega,\\
			u = g_D, \quad \dfrac{\partial u}{\partial \bm n}=g_N& {\rm on}~~\partial \Omega,
		\end{array}\right.
	\end{equation}
where $g_D \in H^{3/2}(\partial\Omega)$ and $g_N \in H^{1/2}(\partial\Omega)$ are prescribed functions. Let $v\in H^2(K)$. The integration by parts gives
	\begin{align*}
		& (f,v)_K = ( \Delta^2 u, v)_K
		= (\nabla^2 u, \nabla^2 v)_K  \\
		& \hspace{3cm} + \int_{\partial K} \Big( \frac{\partial \Delta u}{\partial \bm{n}_K} v  + \frac{\partial^2 u}{\partial \bm{n }_K \partial \bm{t}_K} \frac{\partial v}{\partial \bm{t}_K} \Big) \d s
		- \int_{\partial K} \frac{\partial^2 u}{\partial \bm{n}_K^2}\frac{\partial v}{\partial \bm{n}_K} \d s.
	\end{align*}
	If $ v $ is $ C^0 $-continuous, the second term on the right-hand side naturally vanishes when we sum the equation over all polygons in $ \mathcal{T}_h $. However, when $ v = v_h $ is a function in the interior penalty virtual element space $ V_h $, which is not continuous, the second term contributes to the summation. The IPVEM in \cite{ZMZW2023IPVEM} neglects this contribution as it introduces a consistency error with a desired convergence order as shown in \cite{ZMZW2023IPVEM,FY2023IPVEM}, which gives
	\begin{align*}
		(f,v_h) \approx (\nabla^2 u, \nabla^2 v_h) - \sum_{K\in \mathcal{T}_h} \int_{\partial K} \frac{\partial^2 u}{\partial \bm{n}_K^2}\frac{\partial v_h}{\partial \bm{n}_K} \d s.
	\end{align*}
	Using the formula in \eqref{magic} and noting that $u \in H^3(\Omega)$, we get
	\begin{align*}
		\sum_{K\in \mathcal{T}_h} \int_{\partial K} \frac{\partial^2 u}{\partial \bm{n}_K^2}\frac{\partial v_h}{\partial \bm{n}_K} \d s
		& = \sum_{e\in \mathcal{E}_h} \int_e \Big\{\frac{\partial^2 u}{\partial \bm{n}_e^2}\Big\}\Big[\frac{\partial v_h}{\partial \bm{n}_e}\Big] \d s
		+ \sum_{e\in \mathcal{E}_h^0} \int_e \Big[\frac{\partial^2 u}{\partial \bm{n}_e^2}\Big]\Big\{\frac{\partial v_h}{\partial \bm{n}_e}\Big\} \d s \\
		& = \sum_{e\in \mathcal{E}_h} \int_e \Big\{\frac{\partial^2 u}{\partial \bm{n}_e^2}\Big\}\Big[\frac{\partial v_h}{\partial \bm{n}_e}\Big] \d s,
	\end{align*}
	yielding
	\[(f,v_h) \approx (\nabla^2 u, \nabla^2 v_h) - \sum_{e\in \mathcal{E}_h} \int_e \Big\{\frac{\partial^2 u}{\partial \bm{n}_e^2}\Big\}\Big[\frac{\partial v_h}{\partial \bm{n}_e}\Big] \d s.\]
	To ensure the symmetry and enforce the $C^1$-continuity of the discrete variational problem, we rewrite it as
	\begin{align*}
		& (f,v_h)+\sum_{e\in \mathcal{E}_h^\partial} \Big(-\int_e \Big\{\frac{\partial^2 v_h}{\partial \bm{n}_e^2}\Big\}\Big[\frac{\partial u}{\partial \bm{n}_e}\Big] \d s  + \frac{\lambda_e^3}{|e|^3}\int_e [u][v_h] \d s + \frac{\lambda_e}{|e|}\int_e \Big[\frac{\partial u}{\partial \bm{n}_e}\Big]\Big[\frac{\partial v_h}{\partial \bm{n}_e}\Big] \d s \Big)\\
		& \approx (\nabla^2 u, \nabla^2 v_h) - \sum_{e\in \mathcal{E}_h} \int_e \Big\{\frac{\partial^2 u}{\partial \bm{n}_e^2}\Big\}\Big[\frac{\partial v_h}{\partial \bm{n}_e}\Big] \d s
		- \sum_{e\in \mathcal{E}_h} \int_e \Big\{\frac{\partial^2 v_h}{\partial \bm{n}_e^2}\Big\}\Big[\frac{\partial u}{\partial \bm{n}_e}\Big] \d s \\
		& \qquad + \sum_{e\in \mathcal{E}_h} \frac{\lambda_e^3}{|e|^3}\int_e [u][v_h] \d s  + \sum_{e\in \mathcal{E}_h} \frac{\lambda_e}{|e|}\int_e \Big[\frac{\partial u}{\partial \bm{n}_e}\Big]\Big[\frac{\partial v_h}{\partial \bm{n}_e}\Big] \d s ,
	\end{align*}
	where we have used the fact that $[u]_e = 0$ and $[\frac{\partial u}{\partial \bm{n}_e}]_e = 0$ for $e \in \mathcal{E}_h^0$. $\lambda_e\ge 1$ is some edge-dependent parameter. Taking into account Remark \ref{rem:C0issue}, we can describe the IPVEM for \eqref{strongform1} with inhomogeneous boundary value conditions as follows: Find $u_h \in V_h^g = \{ v\in V_h: v = I_h g_D ~{\rm on}~\partial \Omega\}$ such that
	\begin{equation}\label{IPVEM1}
		\mathcal{A}_h(u_h,v_h) = F_h(v_h),\quad v_h\in V_h,
	\end{equation}
	where $g_D$ has been extended to the whole domain.  The right-hand side is
	\[F_h(v_h) = \langle f_h,v_h\rangle + \sum_{e\in \mathcal{E}_h^\partial} \int_e\Big(
	% - \Big[\frac{\partial u}{\partial \bm{n}_e}\Big]
	-g_N\frac{\partial^2 \Pi_h^\nabla v_h}{\partial \bm{n}_e^2}
	+ g_N\frac{\lambda_e}{|e|} \frac{\partial \Pi_h^\nabla v_h}{\partial \bm{n}_e}
	\Big)\d s.\]

	Standard adaptive algorithms based on the local mesh refinement can be written as loops of the form
	\[{\bf SOLVE} \to {\bf ESTIMATE} \to {\bf MARK} \to {\bf REFINE}.\]
	Given an initial polygonal subdivision $\mathcal{T}_0$, to get $\mathcal{T}_{k+1}$ from $\mathcal{T}_k$ we
	first solve the VEM problem under consideration to get the numerical solution $u_k$ on $\mathcal{T}_k$. The error is then estimated by using $u_k$, $\mathcal{T}_k$ and the a posteriori error bound. The local error bound is used to mark a subset
	of elements in $\mathcal{T}_k$ for refinement. The marked polygons and possible more neighboring elements are refined
	in such a way that the subdivision meets certain conditions, for example, the resulting polygonal mesh is still shape regular.

	In the implementation, all the initial meshes are generated by the PolyMesher package \cite{Talischi-Paulino-Pereira-2012},
	which is a polygonal mesh generator based on the Centroidal Voronoi Tessellations.
	In the {\bf MARK} step, We employ the D\"{o}rfler marking strategy with parameter $\theta\in (0,1)$ to select the subset of elements for refinement.  In the {\bf REFINE} step, it is usually time-consuming to write a mesh refinement function since we need to
	carefully design the rule for dividing the marked elements to get a refined mesh of high quality. We divide every polygonal element by connecting the midpoint of each edge to its barycenter, which requires that the barycenter is an internal point of each element. Moreover, for any two polygons $K_1$ and $K_2$ sharing a common edge $e$, we refine them together if $K_1$ is in the refinement set and one endpoint of $e$ is the hanging node of $K_2$, in order to avoid occurrence of short-edges. Such an implementation can be found in \cite{Yu-2021}.

	All examples are implemented in MATLAB R2019b. Our code is available from GitHub (\url{https://github.com/Terenceyuyue/mVEM}) as part of the mVEM package which contains efficient and easy-following codes for various VEMs published in the literature.
	The subroutine
	IPVEM.m is used to compute the numerical solutions.
	The test script main\_IPVEM\_avem.m verifies the convergence rates. The error estimator is computed by the subroutine IPVEM\_indicator.m.
	
Since the VEM solution $u_h$ is not explicitly known inside the polygonal elements, we evaluate the projection error, which is quantified by
	\[
	{\rm ErrH2} = | u - \Pi_h^{\Delta}u_h  |_{2,h} =
	\big( \sum\limits_{K\in\mathcal{T}_h} | u - \Pi_h^{\Delta}u_h |_{2,K}^2 \big)^{1/2}.
	\]

	\subsection{Solution without large gradients}
	
	\begin{example}
		Let $\Omega=(0,1)^2$ and we select the load term $f$ such that the analytical solution of the problem \eqref{BilinearAh} is
		\begin{itemize}
			\item homogeneous boundary value conditions:
			\[u(x,y) = 10 x^2 y^2 (1-x)^2 (1-y)^2 \sin(\pi x).\]
			\item or inhomogeneous boundary value conditions:
			\[u(x,y) = 10 x^2 y^2 (1-x)^2 (1-y)^2 \sin(\pi x) + x^2 + y^2.\]
		\end{itemize}
		
	\end{example}
	
	\begin{figure}[H]
		\centering
		\includegraphics[scale=0.15]{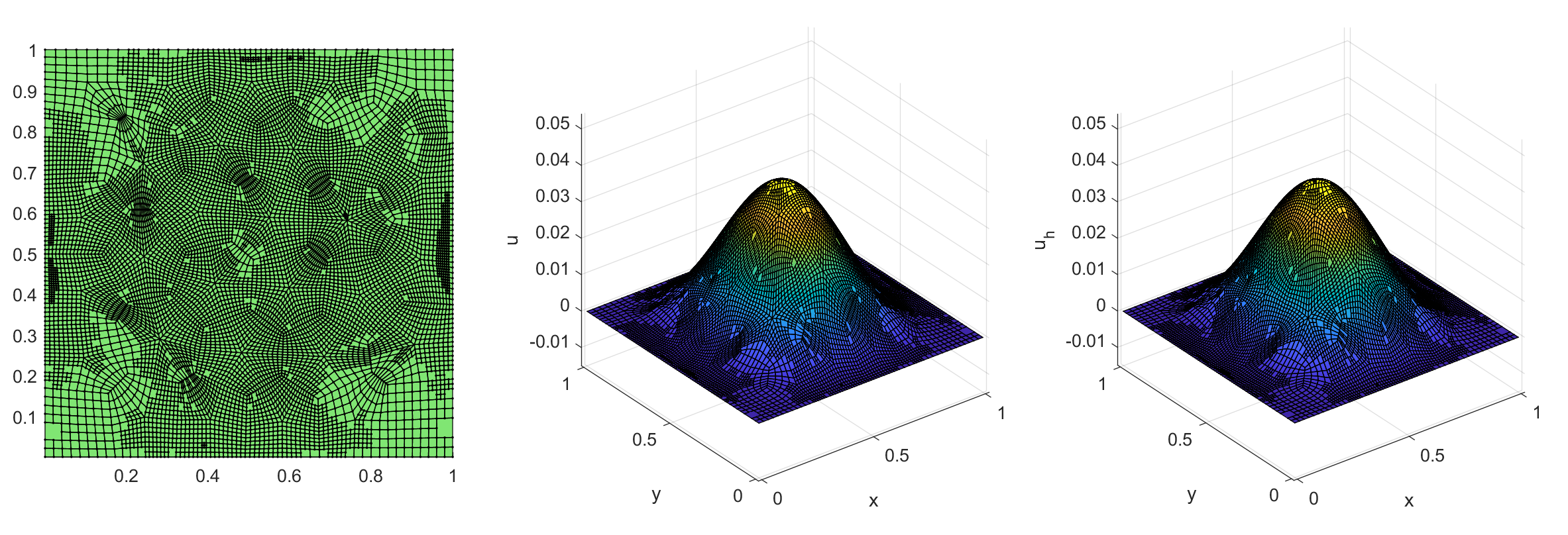}\\
		\caption{The final adapted mesh and the exact and numerical solutions with homogeneous boundary values conditions}\label{fig:ex1solu}
	\end{figure}
	
	\begin{figure}[H]
		\centering
		\subfigure[]{\includegraphics[scale=0.35]{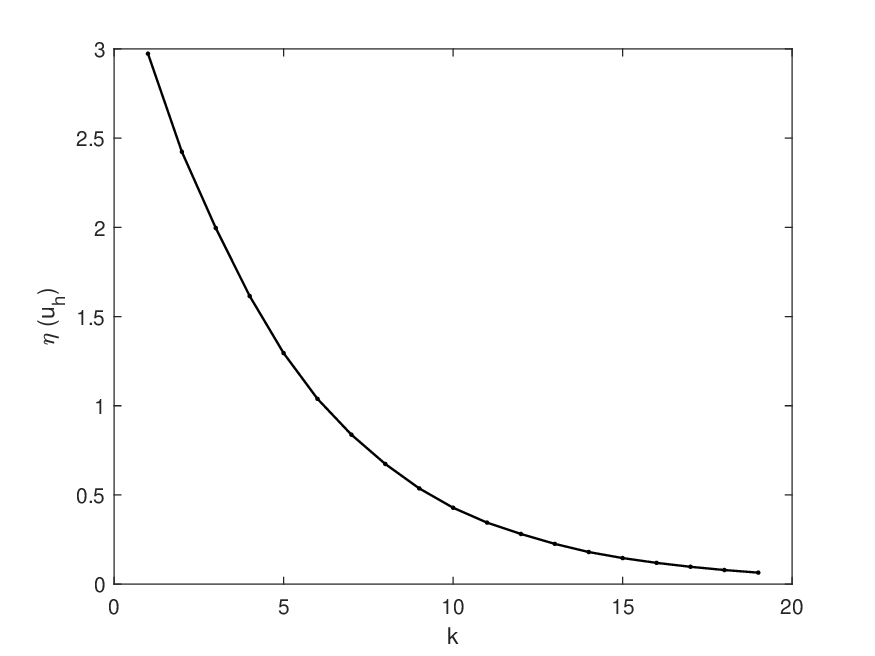}}
		\subfigure[]{\includegraphics[scale=0.35]{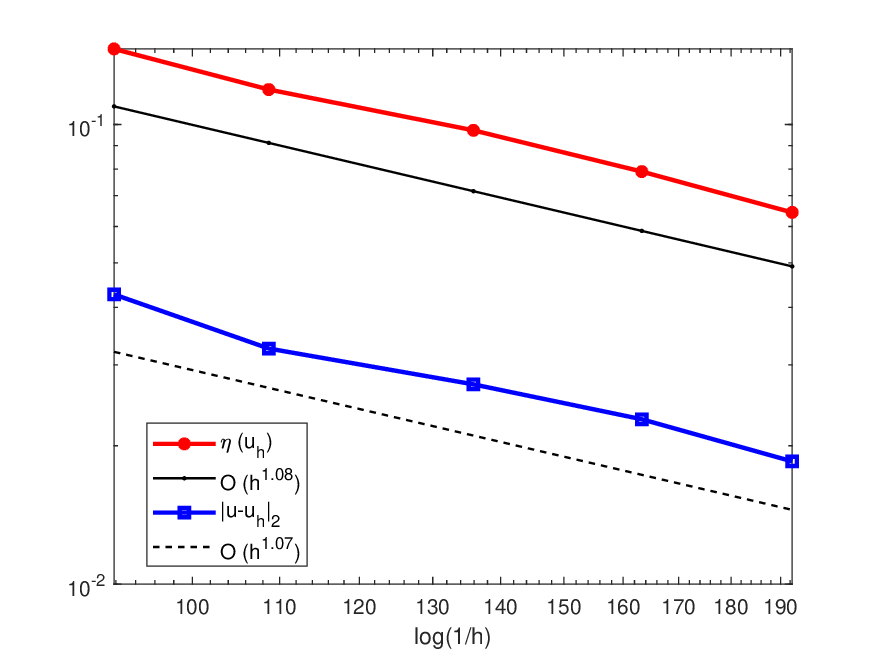}}\\
		\caption{(a) The estimator $\eta(u_h, \mathcal{T}_h)$ for adapted meshes; (b) The convergence orders }\label{fig:ex1estimator}
	\end{figure}

In this example, we employ the D\"orfler marking strategy with a parameter of $\theta = 0.4$ to select the subset of elements for refinement. The final adapted mesh and the solutions are displayed in Fig.~\ref{fig:ex1solu} for the problem with homogeneous boundary values conditions.
Since the exact solution is sufficiently smooth without large gradients, the initial mesh is refined almost everywhere. The estimator $\eta(u_h, \mathcal{T}_h)$ for all adapted meshes is plotted in Fig.~\ref{fig:ex1estimator}a, showing a decrease throughout the adaptive procedure as expected. The convergence order is also displayed in Fig.~\ref{fig:ex1estimator}b, where we observe the optimal first-order convergence rate for both the a priori error and the error estimator, consistent with theoretical predictions.
	Almost the same results are observed for this problem with inhomogeneous boundary values conditions. The final adapted mesh and the solutions are displayed in Fig.~\ref{fig:ex1soluinhom}.
	
	\begin{figure}[H]
		\centering
		\includegraphics[scale=0.15]{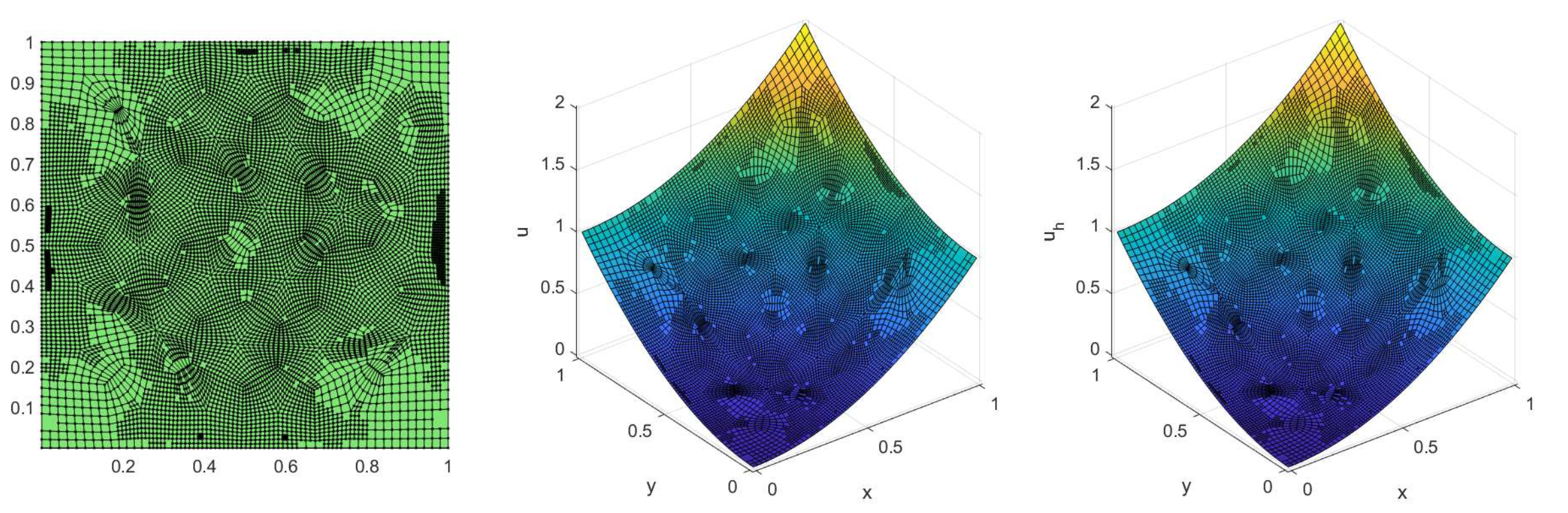}\\
		\caption{The final adapted mesh and the exact and numerical solutions with inhomogeneous boundary values conditions}\label{fig:ex1soluinhom}
	\end{figure}

	\subsection{Solution with large gradients}

	\begin{example}\label{exam:Ex2}
		Let $\Omega=(0,1)^2$ and we select the load term $f$ such that the analytical solution of the problem \eqref{BilinearAh} is
		\[u(x,y) = xy(1 - x)(1 - y){\mathrm{exp}}\left(  - 1000((x - 0.5)^2 + (y - 0.117)^2) \right).\]
	\end{example}

	\begin{figure}[H]
		\centering
		\subfigure[]{\includegraphics[scale=0.45]{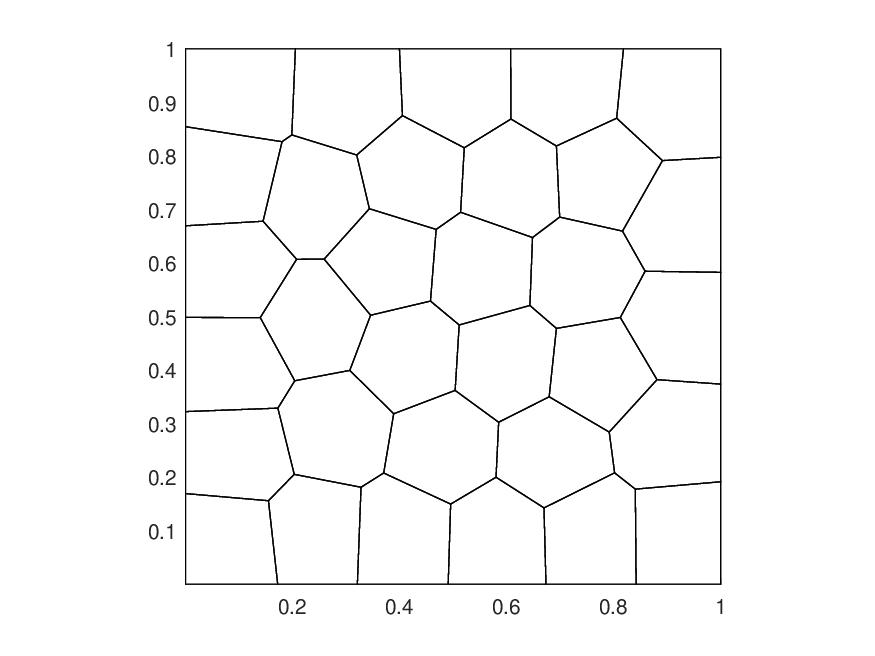}}
		\subfigure[]{\includegraphics[scale=0.45]{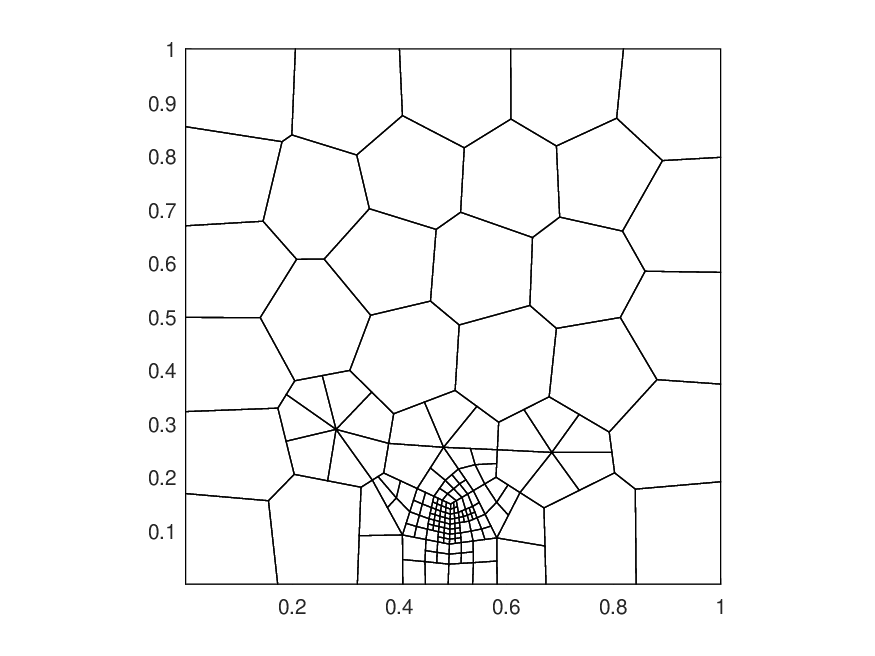}}\\
		\subfigure[]{\includegraphics[scale=0.45]{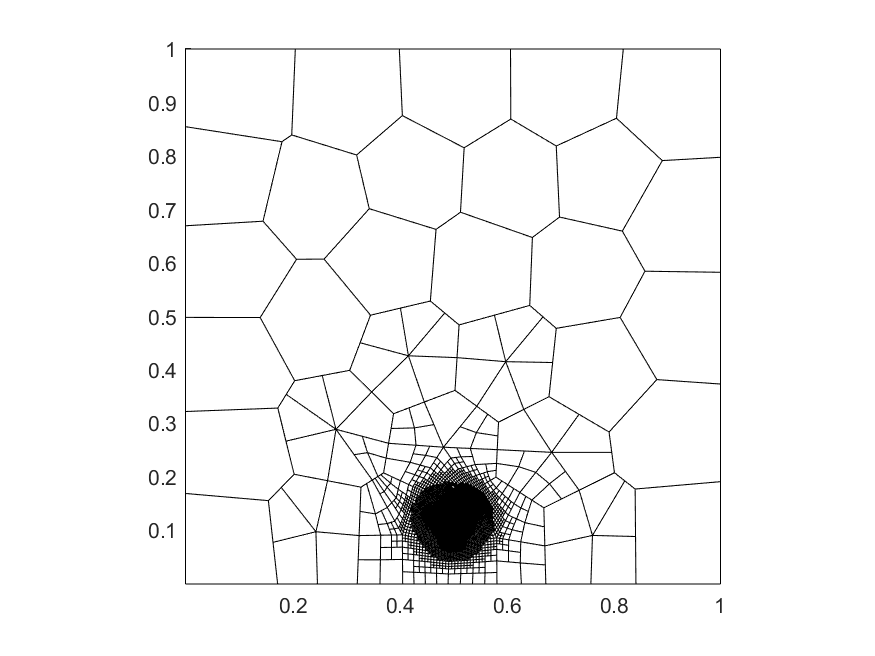}}
		\subfigure[]{\includegraphics[scale=0.45]{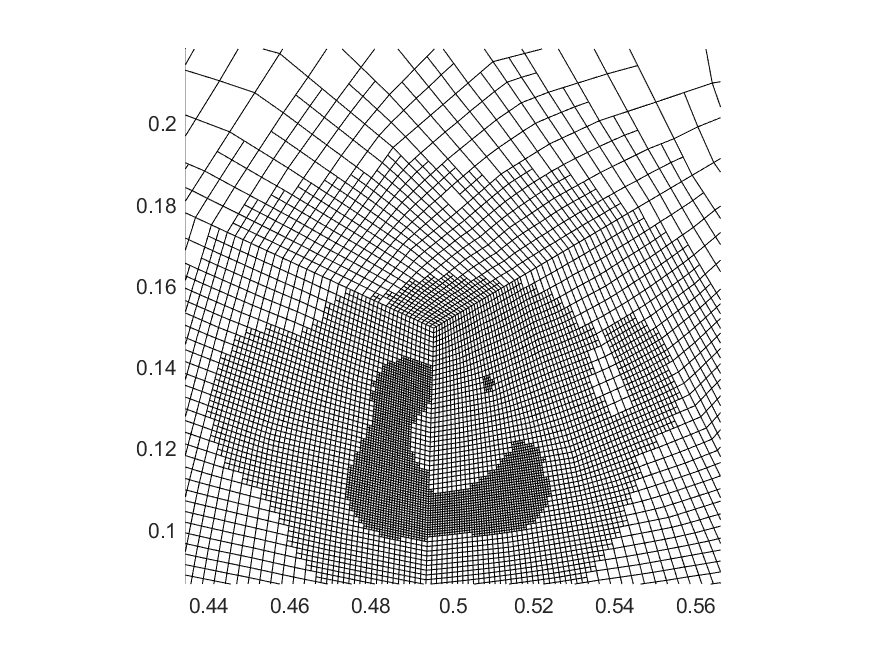}}
		\caption{The initial and the final adapted meshes. (a) The initial mesh;
			(b) After 10 refinement steps; (c) After 20 refinement steps; (d) The zoomed mesh in (c)}\label{refineVEMmesh}
	\end{figure}
	
	\begin{figure}[H]
		\centering
		\includegraphics[scale=0.5]{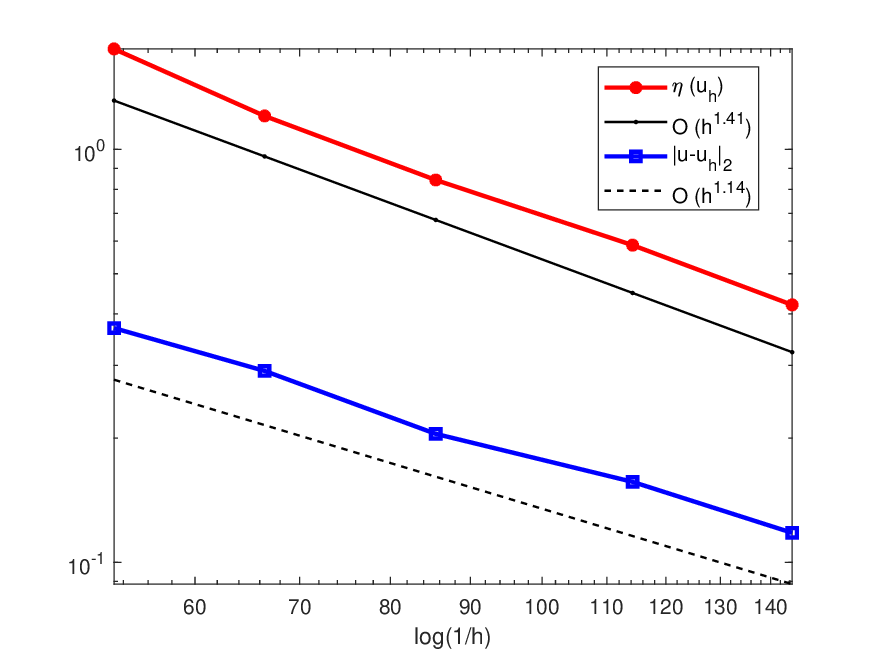}\\
		\caption{Convergence rates of the error $|u - \Pi_h^\Delta u_h|_{2,h}$ and the error estimator $\eta(u_h)$}\label{fig:adaptiveRat}
	\end{figure}
	
	\begin{figure}[H]
		\centering
		\subfigure[Exact]{\includegraphics[scale=0.15,trim = 450 0 450 0,clip]{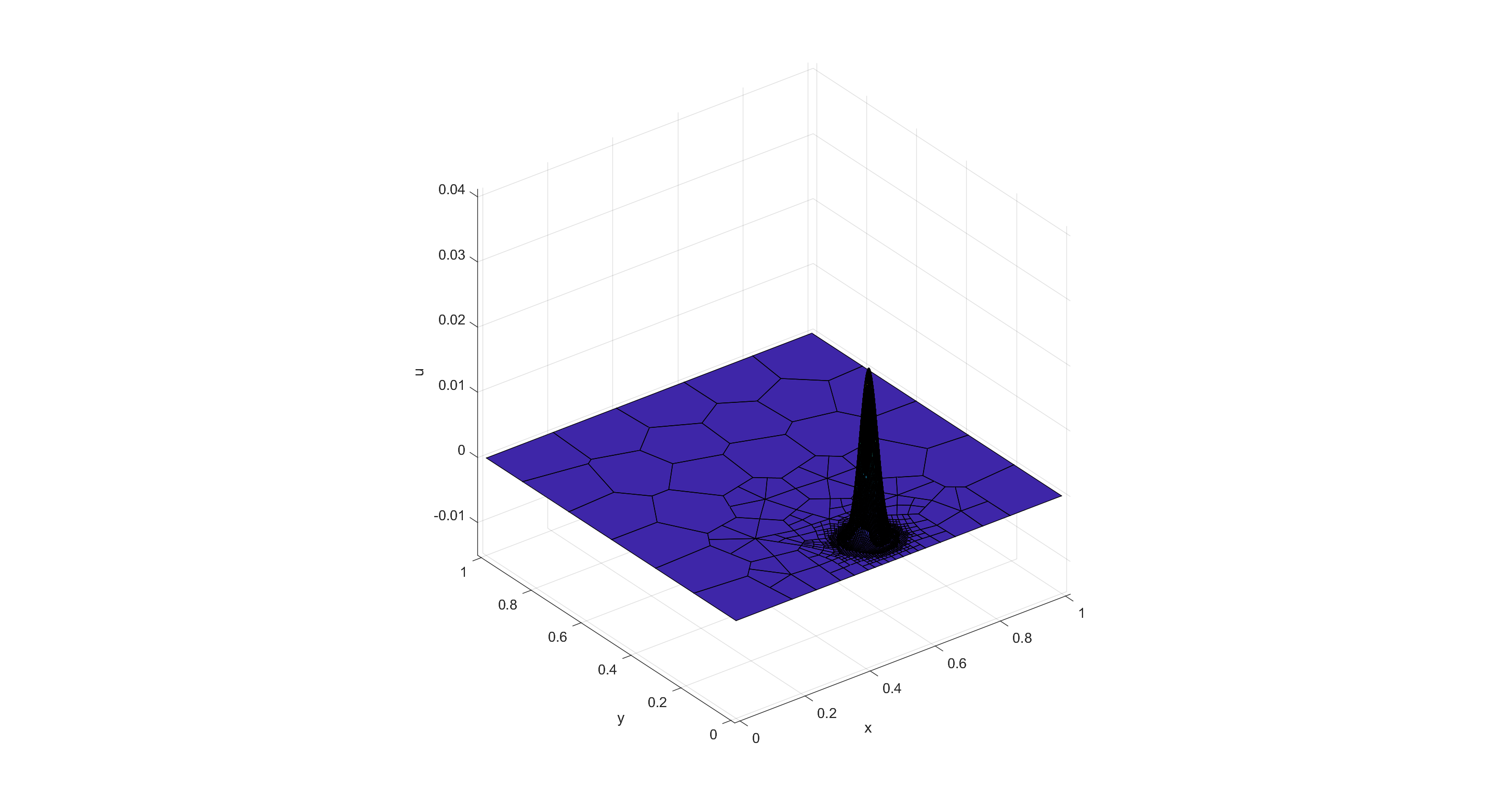}} \hspace{-2em}
		\subfigure[Numerical]{\includegraphics[scale=0.15,trim = 450 0 450 0,clip]{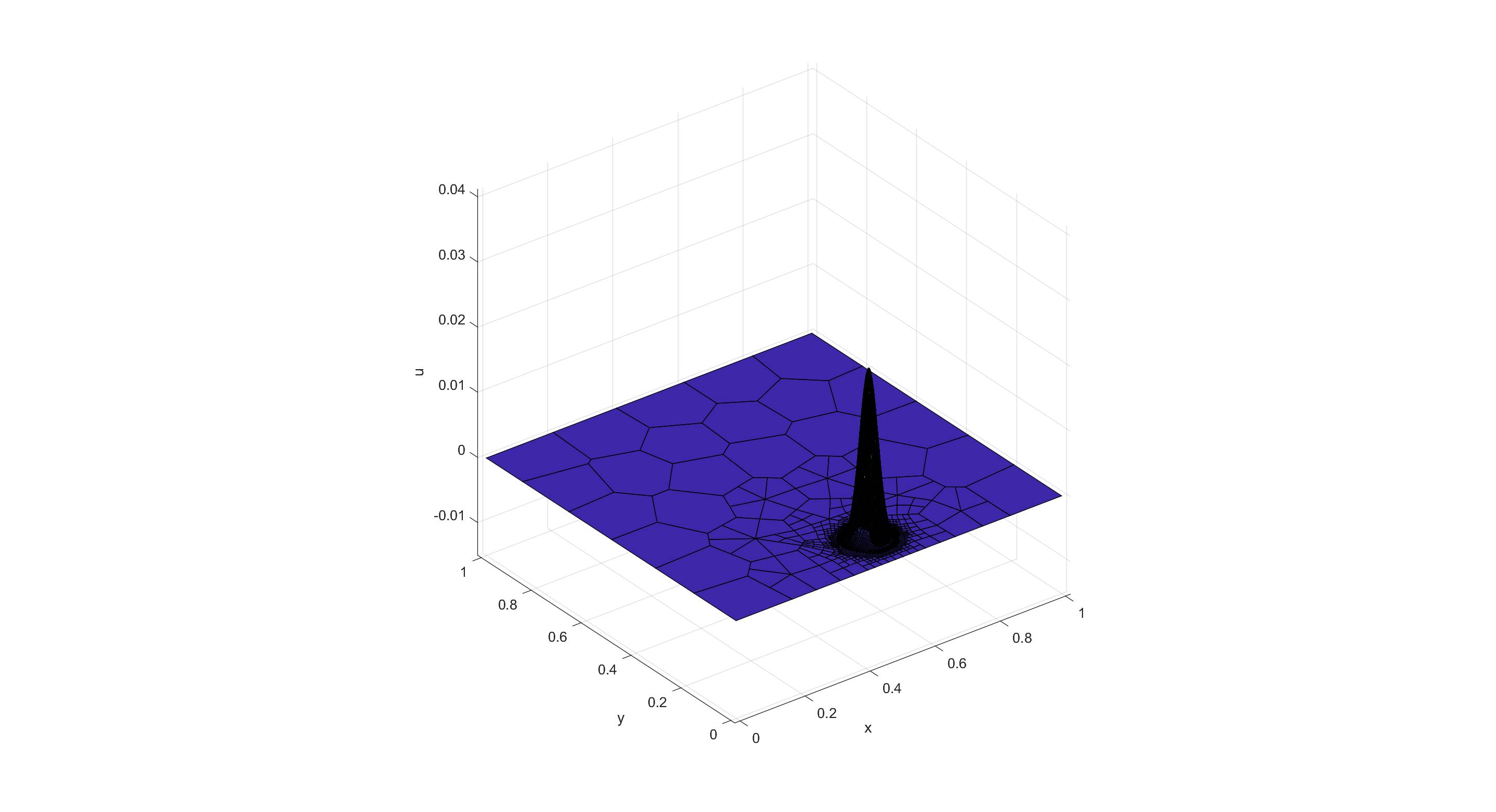}}\\
		\caption{The exact and numerical solutions for Example \ref{exam:Ex2}}\label{fig:refineVEMsol}
	\end{figure}

	For this example, we choose the parameter $\theta = 0.6$.
	The initial mesh and the final adapted meshes after 10 and 20 refinement steps are presented in Fig.~\ref{refineVEMmesh}~(a-c),
	respectively. The detail of the last mesh is shown in Fig.~\ref{refineVEMmesh}~(d). Clearly, no small edges are observed. We plot the adaptive convergence order in Fig.~\ref{fig:adaptiveRat}, which confirms the theoretical prediction as in the last example.
	We also display the numerical and exact solutions in Fig.~\ref{fig:refineVEMsol}, from which we see the adaptive strategy correctly refines the mesh in a neighborhood of the singular point and there is a good level of agreement between the $H^2$ error and error estimator.

\subsection{Solution with low regularity}

	\begin{figure}[H]
		\centering
		\includegraphics[scale=0.5]{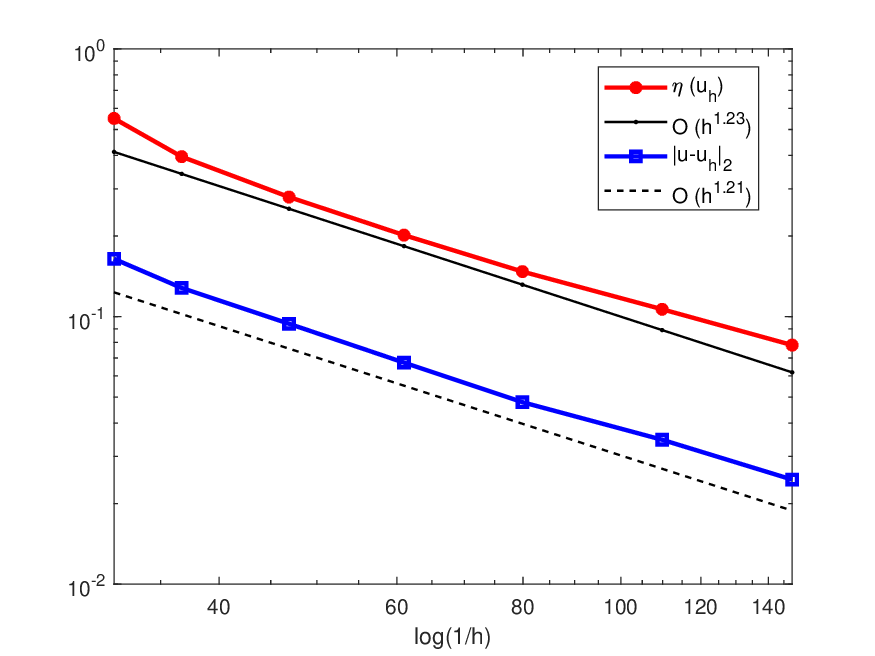}\\
		\caption{Convergence rates of the error $|u - \Pi_h^\Delta u_h|_{2,h}$ and the error estimator $\eta(u_h)$}\label{fig:Ex3order}
	\end{figure}

\begin{example}\label{exam:Ex3}
This example examine the performance of the IPVEM under the low regularity condition. The domain is takes as an L-shape square $\Omega = (0,1)^2\backslash ([1/2,1]\times [0,1/2])$. We select the load term $f$ such that the analytical solution is
	\[u(x,y) = ( (x - 0.5)^2 + (y - 0.5)^2 )^{5/6}.\]
\end{example}

It is evident that the solution $ u \in H^{8/3 - \epsilon} $ for some $ \epsilon > 0 $ exhibits singular behavior at the point $ (1/2, 1/2) $. For error evaluation we introduce a small parameter $ \delta $ into the exact solution. Specifically, we modify $ u $ as $ u(x,y) = ( (x - 0.5)^2 + (y - 0.5)^2 + \delta )^{5/6} $. In this example, we choose the parameter $ \theta = 0.6 $. We plot the adaptive convergence order in Fig.~\ref{fig:Ex3order}, from which we observe that the IPVEM achieves the correct convergence order, even under the low regularity condition of the solution. Additionally, we present the numerical and exact solutions in Fig.~\ref{fig:Ex3solu}, which demonstrate that the adaptive strategy effectively refines the mesh in the vicinity of the singular point.

	\begin{figure}[H]
		\centering
		\includegraphics[scale=0.2]{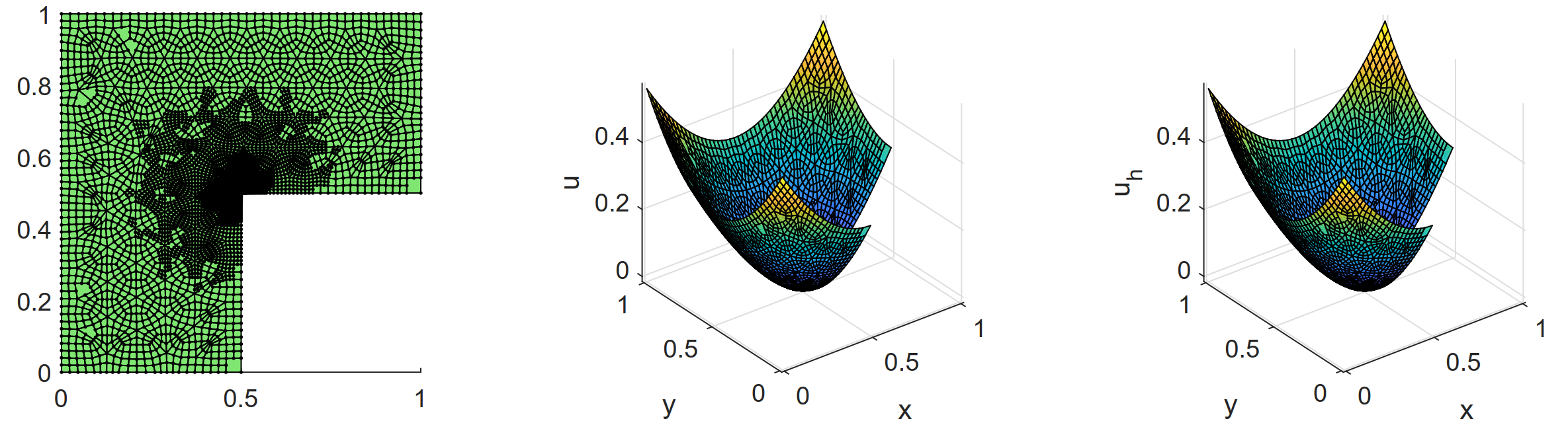}\\
		\caption{The exact and numerical solutions for Example \ref{exam:Ex3}}\label{fig:Ex3solu}
	\end{figure}

\subsection{Formulation based on $H^2$-elliptic projector}

In \cite{Carsten2023Morley,ChenHuangLin2022avem}, the $ H^1 $-elliptic projector $ \Pi_h^\nabla $ is replaced by the $ H^2 $-elliptic projector $ \Pi_h^\Delta $ in the error estimators. For the IPVEM, we can adopt a similar approach, except for the handling of gradient jumps. In this case,
we define
	\begin{align*}
	& J_1(v,w)=   \sum_{e\in\mathcal{E}_h}\frac{\lambda_e}{|e|}\int_e\Big[\frac{\partial  \Pi_h^\Delta v}{\partial \bm{n}_e}\Big]\Big[\frac{\partial \Pi_h^\Delta w}{\partial \bm{n}_e}\Big] \d s, \\
    & J_2(v,w)=-\sum_{e \in \mathcal{E}_h}\int_e\Big\{\frac{\partial^2\Pi_h^\Delta v}{\partial \bm{n}_e^2}\Big\}\Big[\frac{\partial \Pi_h^\nabla w}{\partial \bm{n}_e}\Big] \d s, \\
    & J_3(v,w)=-\sum_{e\in\mathcal{E}_h}\int_e\Big\{\frac{\partial^2 \Pi_h^\Delta w}{\partial \bm{n}_e^2}\Big\}\Big[\frac{\partial  \Pi_h^\nabla v}{\partial \bm{n}_e}\Big] \d s.
	\end{align*}
The substitution of the $ H^1 $ projection in the gradient jumps creates difficulty in the process of $ I_{2,3} $ in \eqref{I23}, as we cannot cancel $ J_2 $. On the other hand, using the $ H^1 $ projection is the preferred choice in the a priori estimate in \cite{FY2023IPVEM}.

We can still prove that $\|w\|_h  = (|w|_{2,h}^2+J_1(w,w))^{1/2}$ introduces a norm on $V_h$. It is enough to prove that $\|v_h\|_h = 0$ implies $v_h = 0$ for any given $v_h \in V_h$. By definition, $\|v_h\|_h = 0$ is equivalent to $|v_h|_{2,h} = 0 $ and $J_1(v_h,v_h) = 0$. Equation $|v_h|_{2,h} = 0$ shows that $\nabla_h v_h$ is a piecewise constant on $\mathcal{T}_h$. On the other hand, the direct manipulation yields
\begin{align*}
\int_e\Big[\frac{\partial v_h}{\partial \bm{n}_e}\Big]^2 \d s
& \le \int_e\Big[\frac{\partial  \Pi_h^\Delta v_h}{\partial \bm{n}_e}\Big]^2  \d s
+ \int_e\Big[\frac{\partial  (v_h-\Pi_h^\Delta v_h)}{\partial \bm{n}_e}\Big]^2  \d s = \int_e\Big[\frac{\partial  (v_h-\Pi_h^\Delta v_h)}{\partial \bm{n}_e}\Big]^2  \d s,  \\
& \le C \sum_{K= K^-, K^+} (|v_h-\Pi_h^\Delta v_h|_{1,K} + |v_h-\Pi_h^\Delta v_h|_{2,K}) \le C\sum_{K= K^-, K^+} |v_h|_{2,K} = 0.
\end{align*}
Since $\nabla_h v_h$ is piecewise constant, we further obtain $[\frac{\partial v_h}{\partial  \bm{n}_e}] = 0$ over the edges of $\mathcal{T}_h$. That is, the normal derivative of $v_h$ is continuous at the interior edges and vanishes at the boundary of $\Omega$. This reduces to the discussion in the proof of Lemma 4.2 in \cite{ZMZW2023IPVEM}, so we omit the remaining argument.

We repeated the test for all the examples and obtained similar results, so we do not include them here.
The test script main\_IPVEM\_avemH2.m verifies the convergence rates. The error estimator is computed by the subroutine IPVEM\_indicatorH2.m.

%	\section{Conclusion}
%	
%	We developed a residual-type a posteriori error estimator for an interior penalty virtual element method applied to the Kirchhoff plate bending problem. A key step involves constructing an enriching operator to establish the reliability and efficiency of the a posteriori error bound. We presented an error estimate for the enriching operator that aligns with the interior penalty finite element method, as detailed in Lemma \ref{lem:enriching}. Additionally, we provided an example estimate in Lemma \ref{lem:medius} with a proof idea applicable to the medius error analysis.

\section*{CRediT authorship contribution statement}
		
All authors worked closely together on the conceptualization, methodology, and writing of this research. Additionally, Yue Yu and Yuming Hu were responsible for implementing the discrete method used in the study.
		
\section*{Declaration of competing interest}
		
The authors declare that they have no known competing financial interests or personal relationships that could have appeared to
influence the work reported in this paper.
		
\section*{Data availability}
		
		No data was used for the research described in the article.
		
		\section*{Acknowledgements}
		
Fang Feng was partially supported by the National Natural Science Foundation of China (NSFC) grant  12401528 and
the Fundamental Research Funds for the Central Universities, No. 30924010837.
Yue Yu was partially supported by the National Science Foundation for Young Scientists of China (No. 12301561) and
the Key Project of Scientific Research Project of Hunan Provincial Department of Education (No. 24A0100).
Jikun Zhao was partially supported by National Natural Science Foundation of China (No. 12371411).

%\section*{References}
		
		\addcontentsline{toc}{section}{References}
		\bibliographystyle{plain} %plain, unsrt
		\bibliography{Refs_IPVEM}

\begin{thebibliography}{10}

\bibitem{Ahmad-Alsaedi-Brezzi-2013}
B.~Ahmad, A.~Alsaedi, F.~Brezzi, L.D. Marini, and A.~Russo.
\newblock Equivalent projectors for virtual element methods.
\newblock {\em Comput. Math. Appl.}, 66(3):376--391, 2013.

\bibitem{Antonietti-Manzini-Verani-2018}
P.~F. Antonietti, G.~Manzini, and M.~Verani.
\newblock The fully nonconforming virtual element method for biharmonic
  problems.
\newblock {\em Math. Models Methods Appl. Sci.}, 28(2):387--407, 2018.

\bibitem{Beirao-Brezzi-Cangiani-2013}
L.~{Beir{\~{a}}o Da Veiga}, F.~Brezzi, A.~Cangiani, G.~Manzini, L.~D. Marini,
  and A.~Russo.
\newblock Basic principles of virtual element methods.
\newblock {\em Math. Models Meth. Appl. Sci.}, 23(1):199--214, 2013.

\bibitem{Beirao-Brezzi-Marini-2014}
L.~{Beir{\~{a}}o Da Veiga}, F.~Brezzi, L.~D. Marini, and A.~Russo.
\newblock The {H}itchhiker's guide to the virtual element method.
\newblock {\em Math. Models Meth. Appl. Sci.}, 24(8):1541--1573, 2014.

\bibitem{Brenner2003}
S.~C. Brenner.
\newblock Poincar\'{e}-{F}riedrichs inequalities for piecewise {$H^1$}
  functions.
\newblock {\em SIAM J. Numer. Anal.}, 41(1):306--324, 2003.

\bibitem{Brenner-Guan-Sung-2017}
S.~C. Brenner, Q.~Guan, and L.~Sung.
\newblock Some estimates for virtual element methods.
\newblock {\em Comput. Methods Appl. Math}, 17(4):553--574, 2017.

\bibitem{BS2008}
S.~C. Brenner and L.~R. Scott.
\newblock {\em The Mathematical Theory of Finite Element Methods}.
\newblock Springer, New York, 2008.

\bibitem{Brenner2005SungC0IP}
S.~C. Brenner and L.~Sung.
\newblock {$C^0$} interior penalty methods for fourth order elliptic boundary
  value problems on polygonal domains.
\newblock {\em J. Sci. Comput.}, 22/23:83--118, 2005.

\bibitem{Brenner-Wang-Zhao-2004}
S.~C. Brenner, K.~Wang, and J.~Zhao.
\newblock Poincar\'{e}-{F}riedrichs inequalities for piecewise {$H^2$}
  functions.
\newblock {\em Numer. Funct. Anal. Optim.}, 25(5-6):463--478, 2004.

\bibitem{Brezzi-Buffa-Lipnikov-2009}
F.~Brezzi, A.~Buffa, and K.~Lipnikov.
\newblock Mimetic finite differences for elliptic problems.
\newblock {\em M2AN Math. Model. Numer. Anal.}, 43(2):277--295, 2009.

\bibitem{Brezzi-Marini-2013}
F.~Brezzi and L.~D. Marini.
\newblock Virtual element methods for plate bending problems.
\newblock {\em Comput. Methods Appl. Mech. Engrg.}, 253:455--462, 2013.

\bibitem{Carsten2023IP}
P.~Bringmann, C.~Carstensen, and J.~Streitberger.
\newblock Local parameter selection in the {$C^0$} interior penalty method for
  the biharmonic equation.
\newblock {\em J. Numer. Math.}, 2023.

\bibitem{Carsten2023Morley}
C.~Carstensen, R.~Khot, and A.~K. Pani.
\newblock Nonconforming virtual elements for the biharmonic equation with
  {M}orley degrees of freedom on polygonal meshes.
\newblock {\em SIAM J. Numer. Anal.}, 61(5):2460--2484, 2023.

\bibitem{Chen-HuangJ-2018}
L.~Chen and J.~Huang.
\newblock Some error analysis on virtual element methods.
\newblock {\em Calcolo}, 55(1):5, 2018.

\bibitem{Chen-HuangX-2020}
L.~Chen and X.~Huang.
\newblock {Nonconforming virtual element method for {$2m$}-th order partial
  differential equations in {$R^n$}}.
\newblock {\em Math. Comput.}, 89(324):1711--1744, 2020.

\bibitem{ChenHuangLin2022avem}
M.~Chen, J.~Huang, and S.~Lin.
\newblock A posteriori error estimation for a {$C^1$} virtual element method of
  {K}irchhoff plates.
\newblock {\em Comput. Math. Appl.}, 389(4194398):123--150, 2022.

\bibitem{Chinosi-Marini-2016}
C.~Chinosi and L.~D. Marini.
\newblock Virtual element method for fourth order problems: {$L^2$}-estimates.
\newblock {\em Comput. Math. Appl.}, 72(8):1959--1967, 2016.

\bibitem{FY2023IPVEM}
F.~Feng and Y.~Yu.
\newblock A modified interior penalty virtual element method for fourth-order
  singular perturbation problems.
\newblock {\em J. Sci. Comput.}, 101(1):Paper No. 21, 2024.

\bibitem{Gudi2010}
T.~Gudi.
\newblock A new error analysis for discontinuous finite element methods for
  linear elliptic problems.
\newblock {\em Math. Comput.}, 79(272):2169--2189, 2010.

\bibitem{Hu-Rui-Shi-2014}
J.~Hu, M.~Rui, and Z.~Shi.
\newblock A new a priori error estimate of nonconforming finite element
  methods.
\newblock {\em Sci. China Math.}, 57(5):887--902, 2014.

\bibitem{Huang2021YuMedius}
J.~Huang and Y.~Yu.
\newblock A medius error analysis for nonconforming virtual element methods for
  {P}oisson and biharmonic equations.
\newblock {\em J. Comput. Appl. Math.}, 386:113229, 2021.

\bibitem{Mao-Shi-2010}
S.~Mao and Z.~Shi.
\newblock On the error bounds of nonconforming finite elements.
\newblock {\em Sci. China Math.}, 53(11):2917--2926, 2010.

\bibitem{Talischi-Paulino-Pereira-2012}
C.~Talischi, G.~H. Paulino, A.~Pereira, and I.~F.~M. Menezes.
\newblock Polymesher: a general-purpose mesh generator for polygonal elements
  written in {M}atlab.
\newblock {\em Struct. Multidiscip. Optim.}, 45(3):309--328, 2012.

\bibitem{Verfurth2013}
R.~Verf\"{u}rth.
\newblock {\em A Posteriori Error Estimation Techniques for Einite Element
  Methods}.
\newblock Oxford University Press, Oxford, 2013.

\bibitem{WXH06}
M.~Wang, J.~Xu, and Y.~Hu.
\newblock Modified {M}orley element method for a fourth order elliptic singular
  perturbation problem.
\newblock {\em J. Comput. Math.}, 24(2):113--120, 2006.

\bibitem{Yu-2021}
Y.~Yu.
\newblock Implementation of polygonal mesh refinement in {MATLAB}.
\newblock {\em \rm arXiv:2101.03456}, 2021.

\bibitem{ZZ24}
B.~Zhang and J.~Zhao.
\newblock The virtual element method with interior penalty for the fourth-order
  singular perturbation problem.
\newblock {\em Commun. Nonlinear Sci. Numer. Simul.}, 133:107964, 2024.

\bibitem{Zhao-Chen-2016}
J.~Zhao, S.~Chen, and B.~Zhang.
\newblock The nonconforming virtual element method for plate bending problems.
\newblock {\em Math. Models Methods Appl. Sci.}, 26(9):1671--1687, 2016.

\bibitem{ZJZZ24}
J.~Zhao, H.~Jian, W.~Zhu, and Zhang B.
\newblock A medius error analysis for interior penalty virtual element method
  for the biharmonic problem.
\newblock {\em preprint}, 2024.

\bibitem{ZMZW2023IPVEM}
J.~Zhao, S.~Mao, B.~Zhang, and F.~Wang.
\newblock The interior penalty virtual element method for the biharmonic
  problem.
\newblock {\em Math. Comp.}, 92(342):1543--1574, 2023.

\bibitem{Zhao-Zhang-Chen-2018}
J.~Zhao, B.~Zhang, S.~Chen, and S.~Mao.
\newblock The {Morley-type} virtual element for plate bending problems.
\newblock {\em J. Sci. Comput.}, 76(1):610--629, 2018.

\end{thebibliography}

	\end{document}